\newtheorem{pro}{Proposition}[section]
\newtheorem{teo}[pro]{Theorem}
\newtheorem{lem}[pro]{Lemma}
\newtheorem{defi}[pro]{Definition}
\newtheorem{cor}[pro]{Corollary}
\newtheorem{rk}[pro]{Remark}
\newtheorem{ex}[pro]{Example}
\newcommand{\Ab}{\mathrm{Ab}}
\newcommand{\modu}{\mathrm{mod}}
\newcommand{\imagen}{\mathrm{Im}}
\newcommand{\Ex}{\mathrm{Ext}}
\newcommand{\Tr}{\mathrm{Tr}}
\newcommand{\Mod}{\mathrm{Mod}}
\newcommand{\Nat}{\mathrm{Nat}}
\newcommand{\ind}{\mathrm{ind}}
\newcommand{\rad}{\mathrm{rad}}
\newcommand{\Supp}{\mathrm{Supp}}
\newcommand{\add}{\mathrm{add}}
\newcommand{\proj}{\mathrm{proj}}
\newcommand{\Proj}{\mathrm{Proj}}
\newcommand{\F}{\mathcal{F}}
\newcommand{\K}{\mathbb{K}}
\newcommand{\X}{\mathcal{X}}
\newcommand{\R}{\mathfrak{R}}
\newcommand{\Hom}{\mathrm{Hom}}
\newcommand{\End}{\mathrm{End}}
\newcommand{\finp}{\mathrm{fin.p}}
\newcommand{\Ext}{\mathrm{Ext}}
\newcommand{\A}{\mathcal{A}}
\newcommand{\B}{\mathcal{B}}
\newcommand{\C}{\mathcal{C}}
\newcommand{\AF}{\mathsf{AF}}
\newcommand{\ltt}{\mathcal}
\newcommand{\fl}{\longrightarrow}
\newcommand{\sta}{\stackrel}
\newenvironment{dem}{\noindent\bf Proof. \rm }{$\ \Box$}
\begin{document}
\title[Standardly stratified ringoids ]{A generalization of the theory of standardly stratified algebras I: Standardly stratified ringoids}
\author{O. Mendoza, M. Ort\'{\i}z, C.S\'aenz, V. Santiago}
\thanks{2010 {\it{Mathematics Subject Classification}}. Primary 18E10. Secondary 16G99.\\
The authors thank the Projects PAPIIT-Universidad Nacional
Aut\'onoma de M\'exico IA105317 and  IN100520.}
\maketitle

\date{}
\begin{abstract}
We extend the classical notion of standardly stratified $k$-algebra (stated for finite dimensional $k$-algebras) to the more general class 
of rings, possibly without $1,$  with enough idempotents. We show that many of the fundamental results, which are known for classical standardly stratified algebras, can  be generalized to this context. Furthermore, new classes of rings appear as: ideally standardly stratified and ideally quasi-hereditary. In the classical theory, it is known that quasi-hereditary and ideally quasi-hereditary algebras are equivalent notions, but in our general setting this is no longer true. 
To develop the theory, we use the well known connection between rings with enough idempotents and skeletally small categories (ringoids or rings with several objects). 
\end{abstract}

\setcounter{tocdepth}{1}
\tableofcontents

\section{Introduction}

The notions of quasi-hereditary algebra and highest weight category were introduced and studied by  E. Cline, B. Parshall and L. L. Scott 
\cite{CPS1, CPS0, Scott}. Highest weight categories are a very special kind of abelian categories that  arise in the representation theory of Lie algebras 
and algebraic groups. The  Highest weight categories with a finite number of simple objects are precisely the module categories of 
quasi-hereditary algebras. It is worth mentioning that quasi-hereditary algebras were originally defined through a  special chain of ideals.

For the setting of finite dimensional algebras, quasi-hereditary algebras were amply studied, among others,  by V. Dlab and M. Ringel in 
\cite{Dlab0, DR2, DR1, Rin}. Dlab and Ringel introduced the set of standard modules ${}_\Lambda\Delta$  associated to a finite dimensional  algebra 
$\Lambda.$ Later on, M. Ringel established a  relationship between  quasi-hereditary algebras and tilting theory \cite{Rin}, which has been very fruitful for 
the study of quasi-hereditary algebras. 
For doing so, M. Ringel studied the homological properties of the category
$\mathcal{F}({}_\Lambda\Delta)$ of ${}_\Lambda\Delta$-filtered $\Lambda$-modules and constructed the
characteristic module ${}_\Lambda T$ (which turned out to be tilting) associated to $\mathcal{F}(_\Lambda\Delta).$ Moreover,
M. Ringel proved that the endomorphism ring $\mathrm{End}({}_\Lambda T)$ is again a quasi-hereditary algebra. Since then, this tilting module is known as the Ringel's characteristic tilting module  associated with a quasi-hereditary algebra.

Because of the success of the applications of the theory of quasi-hereditary algebras,  it was natural to find useful generalizations of the notion of  
quasi-hereditary algebra. One  step in this direction was given by  V. Dlab, who  introduced the concept of standardly stratified algebra 
\cite{Dlab}. These finite dimensional algebras have been amply studied  \cite{ADL, AHLU, Frisk1, Frisk2, FriskMaz, FKM, Mazor1, Mazor2,MazOv, MazPark, MSX, MenSan, Webb, Xi}  and have become  an useful tool for different areas in mathematics.

Once we have the notion of standardly stratified algebra \cite{Dlab}, in the context of finite dimensional algebras,  a natural question is to find a more general class of algebras on which has sense to define the notion of standardly stratified algebra. These are precisely the rings with enough idempotents. These kind of rings appear very naturally in different contexts, for example, as a generalization of Ringel's notion of species
\cite{Rin2} or in connection with the Galois covering in the sense of Bongartz-Gabriel \cite{BG} or De la Pe\~na-Martinez \cite{DM}. More generally, this type of rings appears as ``Gabriel functor rings''  (see discussion after proposition 1 on pag. 346 in
\cite{Gabriel}) or ``rings with several objects'' in \cite{Mitchell}.

The context of rings with several objects (ringoids, in modern terminology) has become very fruitful as a tool that allows us to understand more deeply certain branches of mathematics. For example, motivated by the work on functor categories in \cite{Aus2, Aus},  
R. Mart\'{\i}nez-Villa and \O. Solberg  studied the Auslander-Reiten components of finite dimensional algebras. They did so, in order to stablish when the category of graded functors is noetherian  \cite{MVS1, MVS2, MVS3}. Recently R. Mart\'{\i}nez-Villa and M. Ort\'{\i}z studied in \cite{MVO1, MVO2} tilting theory in arbitrary functor categories. They proved that most of the properties that are satisfied by a tilting module over an Artin algebra  also hold true for functor categories. To mention some, Brenner-Butler's Theorem and Happel's Theorem are valid on this more general context.

Inspired by the works mentioned above and the fact that in the theory of quasi-hereditary algebras the notion of tilting module is relevant, 
M. Ort\'{\i}z introduced in \cite{Martin} the concept of quasi-hereditary category. He did so, in order to study the Auslander-Reiten components of a 
finite dimensional algebra $\Lambda.$ In a similar way as the standard modules appear in the theory of quasi-hereditary algebras, M. Ort\'{\i}z 
defined the  concept of standard functors, which turned out to be a generalization of the notion of standard modules  \cite{Martin}. In particular, 
he established a connection between highest weight categories and quasi-hereditary categories. He did so, by following the ideas introduced by 
H. Krause in \cite{Krause2}, that is, M. Ort\'{\i}z  compared the notion of standard objects in an abelian length category and standard subcategories 
of the category of $\mathcal{C}$-modules over a quasi-hereditary category $\C.$

In this paper, we define the notions of standardly stratified ringoid and quasi-hereditary ringoid. These definitions generalize the notion of quasi-hereditary category given by  M. Ort\'{\i}z in \cite{Martin}. To start with, we recall that for any class of objects $\B$ of a category $\C,$ $\ind\,\B$ denotes the class of 
iso-classes of local objects $B\in\B,$  where $B$ local means that $\End_\C(B)$ is a local endomorphism ring.

Let $\K$ be a commutative ring and  $\Lambda$ be a $\K$-algebra (possibly without $1$)  such that $\Lambda^2=\Lambda.$ For such algebra $\Lambda,$ we denote by 
$\Mod(\Lambda)$ the category of all the unitary left $\Lambda$-modules $M,$ where unitary means that $\Lambda M=M.$ The finitely generated unitary left $\Lambda$-modules form a full subcategory of $\Mod(\Lambda)$ and it is usually denoted by
 $\modu(\Lambda).$ The class of finitely generated projective objects in $\Mod(\Lambda)$ is denoted by $\proj(\Lambda).$ We denote by $f.\ell(\K)$ the class of all the $\K$-modules of  finite length. In this context, in the category $\Mod(\Lambda)$ usually there exist infinitely many
finitely generated indecomposable projective $\Lambda$-modules,
in contrast to the case when $\Lambda$ is an Artin $R$-algebra.
In order to define a right standardly stratified algebra, in the classical sense, 
we have to construct the family of standard modules $\Delta=\{\Delta(i)\}_{i=1}^{n},$ one standard module for each element in 
$\ind\,\proj(\Lambda^{op})=\{P(i)\}_{i=1}^{n}.$  In the general case of 
a  $\K$-algebra  without $1,$  it  is not clear that $\ind\,\proj(\Lambda^{op})$ is even a set (at first glance) and we do not have a reasonable description of this class.  In order to fix this problem, we consider a family $\{e_i\}_{i\in I}$ of orthogonal idempotents in $\Lambda$ satisfying mild conditions (sufficiency, Hom-finiteness). Using the family  $\{e_i\}_{i\in I},$ we produce  partitions $\tilde{\A}$ of the set $\ind\, \proj(\Lambda^{op})$ 
and each one of these partitions give us a sort of stratification of the class of finitely generated projective $\Lambda$-modules. 
Having a partition $\tilde{\A},$ as above, we can define the set of standard modules $\Delta:=\{\Delta(i)\}_{i<\alpha},$
where $\alpha$ is an ordinal number giving the size of the partition $\tilde{\A}.$
\

A $\K$-algebra  with enough idempotents ({\bf w.e.i $\K$-algebra}, for short) is a pair $(\Lambda,\{e_i\}_{i\in I}),$ where $\Lambda$ is a 
$\K$-algebra and $\{e_i\}_{i\in I}$ is a family of orthogonal idempotents of $\Lambda$ such that  $\Lambda=\oplus_{i\in I}\,e_i\Lambda=\oplus_{i\in I}\Lambda e_i.$ In this case  we have that $\Lambda^2=\Lambda.$ It is said that $(\Lambda,\{e_i\}_{i\in I})$ is {\bf Hom-finite} if
$\{e_j\Lambda e_i\}_{i,j\in I}\subseteq f.\ell(\K).$
\

Let  $(\Lambda,\{e_i\}_{i\in I})$ be a Hom-finite w.e.i. $\K$-algebra. Then, by Corollary \ref{WeiEquiv3} (b), for each $i\in I,$ there exists a unique (up to permutations)  family $\overline{e}_i:=\{e_{k,i}\}_{k=1}^{n_i}$ of primitive orthogonal idempotents in 
$\Lambda$ such that $e_i=\sum_{k=1}^{n_i}e_{k,i}.$ Denote by $\ind\,\{e_i\}_{i\in I}$ the quotient of the set 
$\cup_{i\in I}\overline{e}_i$ by the equivalence relation $\sim,$ where 
$f\sim g$ if, and only if, $f\Lambda\simeq g\Lambda.$ Let $[e]$ be the equivalence class of $e\in \cup_{i\in I}\overline{e}_i.$ Then, by Corollary \ref{WeiEquiv3'} (b), we have
\begin{center}
$\ind\,\proj(\Lambda^{op})=\{e\Lambda\;:\;[e]\in \ind\,\{e_i\}_{i\in I}\}.$
\end{center}
 The set of standard modules can be constructed by choosing a partition $\tilde{\A}=\{\tilde{\A}_i\}_{i<\alpha}$
 of the set $\ind\,\{e_i\}_{i\in I},$  where $\alpha$ is an ordinal number (the size of the partition $\tilde{\A}$) and each ordinal $i<\alpha$ is the $i$-th level of the given partition. 
 Define ${}_{\Lambda^{op}}P_e(i):=e\Lambda$ for any $[e]\in\tilde{\A}_i,$ and let
 ${}_{\Lambda^{op}}P:=\{{}_{\Lambda^{op}}P(i)\}_{i<\alpha},$ where ${}_{\Lambda^{op}}P(i):=\{{}_{\Lambda^{op}}P_e(i)\}_{e\in\tilde{\A}_i}.$ The
 family of $\tilde{\A}$-standard right $\Lambda$-modules ${}_{\Lambda^{op}}\Delta=\{\Delta(i)\}_{i<\alpha},$ where
 $\Delta(i):=\{\Delta_{e}(i)\}_{e\in \tilde{\A}_i},$ is defined as follows
 $$\Delta_{e}(i):=\frac{{}_{\Lambda^{op}}P_{e}(i)}{\mathrm{Tr}_{\oplus_{j<i}\overline{P}(j)}({}_{\Lambda^{op}}P_{e}(i))},$$
 where $\overline{P}(j):=\bigoplus_{r\in\tilde{\A}_j}\,{}_{\Lambda^{op}}P_r(j)$ and  $\mathrm{Tr}_{\oplus_{j<i}\overline{P}(j)}({}_{\Lambda^{op}}P_{e}(i))$ is the trace of the $\Lambda$-module $\oplus_{j<i}\overline{P}(j)$ in ${}_{\Lambda^{op}}P_{e}(i).$ We say that the pair $(\Lambda, \tilde{\A})$ is a right
{\bf standardly stratified} algebra if $\mathrm{Tr}_{\oplus_{j<i}\overline{P}(j)}({}_{\Lambda^{op}}P_{e}(i))\in \ltt{F}_f(\bigcup_{j<i}\Delta(j)),$
for any $i<\alpha$ and $e\in\tilde{\A}_i.$ Here $\ltt{F}_f(\bigcup_{j<i}\Delta(j))$ is the class of all the $\Lambda$-modules admitting a finite filtration in 
$\bigcup_{j<i}\Delta(j).$ Moreover, we say that $(\Lambda, \tilde{\A})$ is right {\bf quasi-hereditary} if it is standardly stratified and $\End (\Delta_e(i))$ is a division ring for any $[e]\in \tilde{A}_i$ and $i<\alpha.$
\

Let  $\Lambda$ be a basic Artin $\K$-algebra and let $\{e_i\}_{i=1}^n$ be a complete family of primitive ortogonal idempotents of $\Lambda.$ 
Then, we have that $\ind\,\{e_i\}_{i=1}^n=\{e_i\}_{i=1}^n.$ The classical notion of standardly stratified algebra for $\Lambda$ corresponds to 
the given one for the very particular pair $(\Lambda, \tilde{\mathbb{T}}),$ where $\tilde{\mathbb{T}}$ is the  one-point partition $\tilde{\mathbb{T}}=\{\tilde{\mathbb{T}}_i\}_{i<n},$ 
defined as $\tilde{\mathbb{T}}_i:=\{e_{i+1}\}$ for $i\in[0,n).$ Note that we can choose different partitions $\tilde{\A}$ of the set $\{e_i\}_{i=1}^n,$ not only the trivial one. 

In this paper, we also define ideally standardly stratified and ideally quasi-hereditary $\K$-ringoids. We explain their meaning in terms of rings with enough idempotents. Let $(\Lambda,\{e_i\}_{i\in S})$ be a w.e.i $\K$-algebra. An ideal $I\unlhd\Lambda$ is {\bf right stratifying} if $I^2=I$  and
 $eI\in\proj(\Lambda^{op})$ for any $e\in\{e_i\}_{i\in S}.$ We say that $I$ is {\bf right hereditary} if it is right stratifying and $I\rad(\Lambda)I=0.$
 A right stratifying (respectively,  hereditary) chain in $\Lambda$ is a chain $\{I_i\}_{i<\alpha}$ of ideals of $\Lambda$ such that
 $\sum_{i<\alpha}\,I_i= \Lambda$ and  $I_i/I'_i$ is right stratifying (respectively, hereditary) in $\Lambda/I'_i,$ where $I'_i:=\sum_{j<i}\,I_j.$ 
 \
 
 Assume now that $(\Lambda,\{e_i\}_{i\in S})$ is Hom-finite. Let $\tilde{\A}=\{\tilde{\A}_i\}_{i<\alpha}$ be a partition of the set 
 $\ind\,\{e_i\}_{i\in S}.$ The partition $\tilde{\A}$ induces a chain $\{I_{\A_i}\}_{i<\alpha}$ of ideals 
 in $\Lambda$ satisfying that $\sum_{i<\alpha}\,I_{\A_i}= \Lambda,$ where $I_{\A_i}$ is the ideal generated by the set of idempotents 
 $\{e\,:\;[e]\in \A_i \}$ and
 $\A_i:=\bigcup_{j\leq i}\,\tilde{\A}_j$ (see Lemma \ref{AuxB}). We say that $(\Lambda, \tilde{\A})$  is right
 {\bf ideally standardly stratified} (respectively, right {\bf ideally quasi-hereditary})  if the associated 
 chain $\{I_{\A_i}\}_{i<\alpha}$ of ideals in $\Lambda$ is right stratifying (respectively, hereditary). 
 
 The following question arises naturally: Are 
 the definitions of ideally standardly stratified (respectively, ideally quasi-hereditary) and standardly stratified (respectively, quasi-hereditary) 
 equivalent? In the case of an Artin algebra $\Lambda$ and the one-point  partition $\tilde{\mathbb{T}},$ defined above, it is well known that both 
 notions are equivalent. For the general case, we have the following results, that are a consequence of Theorems \ref{FilTrProj} and  \ref{stqs}. In order 
 to state the following two theorems, we recall (see in section 5) the notion of right noetherian partition. Let $(\Lambda,\{e_i\}_{i\in S})$ be a Hom-finite w.e.i $\K$-algebra and $\tilde{\A}=\{\tilde{\A}_i\}_{i<\alpha}$ be a partition of the set 
 $\ind\,\{e_i\}_{i\in S}.$ We say that $\tilde{\A}$ is {\bf right noetherian} if for any $i<\alpha$  and $[e]\in\tilde{\A}_i$ the following statements 
 hold true: the set $\{j<\alpha\;:\; eI_{\A_j}/eI'_{\A_j}\neq 0\}$ is finite and there is some $i_0<\alpha$ such that $eI_{\A_j}=e\Lambda$ for any $j\geq i_0.$
 \
 
 \noindent 
 {\bf Theorem A} Let $(\Lambda,\{e_i\}_{i\in S})$ be a Hom-finite w.e.i $\K$-algebra and $\tilde{\A}=\{\tilde{\A}_i\}_{i<\alpha}$ be a partition of $\ind\,\{e_i\}_{i\in S}.$ Then, the following statements are equivalent.
\begin{itemize}
\item[(a)] $(\Lambda,\tilde{\A})$ is right standardly stratified.
\item[(b)] The partition $\tilde{\A}$ is right noetherian and for any $i<\alpha,$ $[e]\in\tilde{\A}_i$ and $t<\alpha,$ we have that 
$eI_{\A_t}/eI'_{\A_t}$ is a finitely generated projective right $\Lambda/I'_{\A_t}$-module.
\end{itemize}

As a consequence of the theorem above, it can be shown (see Corollary \ref{stiss}) the following result.
\

\noindent
{\bf Corollary B} Let $(\Lambda,\{e_i\}_{i\in S})$ be a Hom-finite w.e.i $\K$-algebra and $\tilde{\A}$ be a partition of $\ind\,\{e_i\}_{i\in S}.$ Then, the following statements are equivalent.
\begin{itemize}
\item[(a)] $(\Lambda,\tilde{\A})$ is right standardly stratified.
\item[(b)] The partition $\tilde{\A}$ is right noetherian and $(\Lambda,\tilde{\A})$ is right ideally standardly stratified.
\end{itemize}

\noindent
{\bf Theorem C} Let $(\Lambda,\{e_i\}_{i\in S})$ be a Hom-finite w.e.i $\K$-algebra and $\tilde{\A}=\{\tilde{\A}_i\}_{i<\alpha}$ be a partition of $\ind\,\{e_i\}_{i\in S}.$ Then, the following statements are equivalent.
\begin{itemize}
\item[(a)] $(\Lambda,\tilde{\A})$ is right quasi-hereditary and $\Hom(\Delta_e(i),\Delta_{e'}(i))=0$ for $[e]\neq [e']$ in $\tilde{\A}_i$ and $i<\alpha.$
\item[(b)] The partition $\tilde{\A}$ is right noetherian and $(\Lambda,\tilde{\A})$ is right ideally quasi-hereditary.
\end{itemize}
 
Given a Hom-finite w.e.i $\K$-algebra $(\Lambda,\{e_i\}_{i\in S})$ and a partition $\tilde{\A}=\{\tilde{\A}_i\}_{i<\alpha}$ of 
$\ind\,\{e_i\}_{i\in S},$ we  have the family of standard modules $\Delta=\{\Delta(i)\}_{i<\alpha}$ and the  category $\ltt{F}_{f}(\Delta)$ of 
all the right $\Lambda$-modules which has a finite filtration through the objects of $\Delta.$ 
\

In this the paper, we also study some important properties of $\ltt{F}_{f}(\Delta).$ 
As in the classic case, we prove (see Theorem \ref{filt-sum-direct})  that if the standard modules $\Delta_{e}(i)$ are finitely presented, then 
$\ltt{F}_{f}(\Delta)$ is a Krull-Schmidt skeletally small category and all the objects in this category are finitely presented. Furthermore, 
the multiplicity $[M:\Delta_{e}(i)]$ of each $\Delta_{e}(i),$ for a module $M\in \ltt{F}_{f}(\Delta),$ does not depend on any $\Delta$-filtration of $M.$
In order to prove that fact, we introduce an analogous of the trace filtration given by Dlab and Ringel
\cite[Lemma 1.4]{DR1} and  characterize the modules which belongs to $\ltt{F}_{f}(\Delta)$ in terms of this trace filtration (see Theorem \ref{filtraza}).
It is worth mentioning that the proofs of these results uses transfinite induction, in contrast with the classic case, where the usual induction is enough to 
handle the situation. 
As an application of the trace filtration, we show that $\ltt{F}_{f}(\Delta)$
is closed under kerneles of epimorphisms between its objects, a fact that is well known for the classical case. 

\section{Preliminaries}

{\sc Left standardly stratified algebras.} For a positive integer $m,$ we set $[1,m]:=\{1,2,\ldots,m\}.$ Let $\Lambda$ be an Artin algebra. We denote by $\modu\,(\Lambda)$ the category of finitely generated left $\Lambda$-modules. Given a class $\X$ of $\Lambda$-modules, we denote by
$\mathcal{F}(\mathcal{X})$ the full subcategory of
 $\mathrm{mod}\,(\Lambda)$ consisting of all modules having a filtration by objects in
$\mathcal{\X}$. That is, a  finite chain
\[0=M_0\subseteq M_1\subseteq\cdots\subseteq M_m=M\] of submodules of
$M$ such that $M_i/M_{i-1}$ is isomorphic to a module in $\X$
for every $i\in[1,m].$ Note that $\mathcal{F}(\mathcal{X})$  is closed
under extensions. In general, $\mathcal{F}(\mathcal{X})$ fails to be
closed under direct summands \cite{Rin}.
\

  Let $n$ be the rank of the Grothendieck group of the Artin algebra $\Lambda.$ In this case, we can choose a complete family $\{S(i)\}_{i\in[1,n]}$ of representatives of the pairwise non isomorphic simple $\Lambda$-modules. We also have a complete family $\{P(i)\}_{i\in[1,n]}$ of representatives of the indecomposable and pairwise  non isomorphic projective $\Lambda$-modules, where $P(i)$ is the projective cover of the simple $S(i).$ Then, we have 
  that $\ind\,\proj\,(\Lambda)=\{P(i)\}_{i\in[1,n]}.$
\

Let  $\leq$ be a linear order on  $[1,n].$ Then, for each $i\in[1,n],$ we have the $\Lambda$-module $U(i):=\mathrm{Tr}_{\oplus_{j<i} P(j)}P(i),$ which is the trace of $\oplus_{j<i} P(j)$ in $P(i).$ Thus, $\Delta(i):=P(i)/U(i)$ is the $i$-th standard $\Lambda$-module and  $\Delta:=\{\Delta(i)\}_{i\in[1,n]}$ the family of standard $\Lambda$-modules. One can see that:
$\mathrm{Hom}_\Lambda(\Delta(j),\Delta(i))=0$ for $j< i,$ and $\Ext^1_\Lambda(\Delta(j),\Delta(i))=0$ for $j\leq i$ \cite{Rin}. It is said that the algebra $\Lambda$ is (left) standardly stratified, with respect to the linear order $\leq$ on $[1,n],$ if ${}_\Lambda\Lambda\in\mathcal{F}(\Delta).$ It is well known that ${}_\Lambda\Lambda\in\mathcal{F}(\Delta)$ if, and only if, $U(i)\in\mathcal{F}(\{\Delta(j): j< i\})$ for any $i\in[1,n].$

{\sc Functor categories and ringoids.} Let $\mathbb{K}$ be a commutative ring with $1.$  A category $\C$ is said to be a {\bf $\mathbb{K}$-category} if $\Hom_\C(X,Y)$ is a $\mathbb{K}$-module for any $(X,Y)\in\C^2,$ and the composition of morphisms in $\C$ is $\mathbb{K}$-bilinear.  We denote by $[\A,\B]$ the category of additive (covariant) functors between two $\mathbb{K}$-categories $\A$ and $\B,$ where $\A$ is skeletally small. For any $F,G\in[\A,\B],$ we have that
$\Hom_{[\A,\B]}(F,G)$ is the class $\Nat(F,G)$  of natural morphisms from $F$ to $G.$ For the sake of simplicity, we write $(-,?)$ instead of $\Hom(-,?)$ wherever this $\Hom(-,?)$ is defined. The term {\bf subcategory} means full subcategory.
\

Let $\C$ be a  $\mathbb{K}$-category. We say that an object $C\in\C$ is {\bf local} if  $\End_\C(C)$ is a local ring.   For any subclass
$\B$ of objects in
$ \C,$ the class of iso-classes of local objects $B\in\B$  will be denoted by $\ind\,\B.$ For any $B\in\B,$ which is local, we write $[B]$ for the corresponding iso-class. That is,
$\ind\,\B:=\{[B]\;\text{ such that $B\in\B$ is local}\}.$ For simplicity, sometimes we  write $B$ instead of
$[B].$ If $\C$ is an additive category, we denote by $\add\,(\B)$ the class of all direct summands of finite coproducts of copies of objects in $\B.$
\

A very useful tool in the theory of categories is Yoneda's Lemma. We state this lemma for the case of $\mathbb{K}$-categories since this is precisely the context where we are working. Let $\C$ be a $\mathbb{K}$-category.  Yoneda's Lemma states that Yoneda's functor
$${\small Y=Y_{\C}:\C\to [\C^{op},\Ab],\quad(a\xrightarrow{f}b)\mapsto(\Hom_\C(-,a)\xrightarrow{\Hom_\C(-,f)}\Hom_\C(-,b))}$$
is full and faithful. Moreover, for any $c\in\C,$ we have an isomorphism of abelian groups $\Hom(Y(c),F)\to F(c),$ $\alpha\mapsto \alpha_c(1_c).$
\

Following B. Mitchell in \cite{Mitchell}, we recall  that  a {\bf $\mathbb{K}$-ringoid} (or $\mathbb{K}$-algebra with several objects) is just a skeletally small $\mathbb{K}$-category. A {\bf ringoid} is just a $\mathbb{Z}$-ringoid (or ring with several objects). Note that any $\mathbb{K}$-ringoid is in particular a ringoid.
\

Let  $\mathfrak{S}$ be a ringoid. A left $\mathfrak{S}$-module is an additive
covariant functor $F:\mathfrak{S}\to \Ab,$ where $\Ab$ is the category of abelian groups. The category of left $\mathfrak{S}$-modules is  $\Mod\,(\mathfrak{S}):=[\mathfrak{S},\Ab].$  Note that $\Mod\,(\mathfrak{S})$ is abelian and  bicomplete,  since $\Ab$ is so. Furthermore, it can be shown  that $\Mod\,(\mathfrak{S})$ is a Grothendieck category.
\

A right $\mathfrak{S}$-module is an additive
contravariant functor $F:\mathfrak{S}\to \Ab.$ Thus, $\mathrm{Mod}_{\rho}(\mathfrak{S}):=\Mod\,(\mathfrak{S}^{op})$  is the category of right $\mathfrak{S}$-modules, where
$\mathfrak{S}^{op}$ is the opposite category of $\mathfrak{S}.$
\

We denote by $\Proj\,(\mathfrak{S})$ the class of projective left $\mathfrak{S}$-modules and $\proj\,(\mathfrak{S})$ denotes the class of finitely generated projective left $\mathfrak{S}$-modules. We also have
$$\Proj_\rho(\mathfrak{S}):=\Proj\,(\mathfrak{S}^{op})\;\text{ and }\;
\proj_\rho(\mathfrak{S}):=\proj\,(\mathfrak{S}^{op}).$$

A $\mathbb{K}$-ringoid with one element is basically a $\mathbb{K}$-algebra with $1.$ Namely, let $\mathfrak{S}$ be a $\mathbb{K}$-ringoid with one element. In this case, $\mathfrak{S}$ has a
single element $*$ and $\End_{\mathfrak{S}}(*)$ is a $\mathbb{K}$-algebra with $1.$ Reciprocally, let $S$ be any $\mathbb{K}$-algebra with $1.$ One
can see $S$ as the $\mathbb{K}$-ringoid with one object   $\mathfrak{R}_S,$  where $\mathfrak{R}_S$ has a  single element $*$ and $\End_{\mathfrak{R}_S}(*):=S.$ Note that the category
$\Mod\,(\mathfrak{R}_S)$ and the category $\Mod\,(S)$ of left $S$-modules are isomorphic.
\

In all that follows, we shall consider the above notation. Let $\mathfrak{R}$ be a ringoid. We say that a ringoid $\mathfrak{S}$ is Morita equivalent to $\mathfrak{R}$ if the module categories
$\Mod_\rho(\mathfrak{R})$ and $\Mod_\rho(\mathfrak{S})$ are equivalent.
\

Using Yoneda's functor $Y:\mathfrak{R}\to \Mod_\rho(\mathfrak{R})$ it can be proved that $M\in\proj_\rho(\mathfrak{R})$ iff $M$ is a direct summand of $\coprod_{i\in I}\,Y(a_i)$ for some finite family $\{a_i\}_{i\in I}$ of objects in $\mathfrak{R}.$ Thus, the ringoid $\mathfrak{R}$ can be seen as a full subcategory of $\proj_\rho(\mathfrak{R}).$
\

Following M. Auslander \cite{Aus}, it is said that $M\in\Mod_\rho(\R)$ is {\bf finitely presented} if there is an exact sequence $P_1\to P_0\to M\to 0,$ where $P_1,P_0\in\proj_\rho(\R).$ We denote by $\finp_\rho(\R)$
the full subcategory of $\Mod_\rho(\R)$ whose objects are all the finitely presented right $\R$-modules.  A {\bf projective cover} of $M\in \Mod_\rho(\R)$ is an essential epimorphism $P\to M$ in $\Mod_\rho(\R)$ with $P\in\Proj_\rho(\R).$  A projective presentation $P_1\xrightarrow{g} P_0\xrightarrow{f} M\to 0$ is {\bf minimal} if the epimorphisms $P_0\xrightarrow{f} M$ and $P_1\to \imagen\,(g)$ are projective covers.
\

Let $\R$  be an {\bf additive} ringoid, that is, $\R$ is a skeletally small aditive category. It is well known (see \cite{Aus}, \cite[Theorem 1.4]{Fre} and \cite{He}) that  $\finp_\rho(\R)$ is a full abelian subcategory of $\Mod_\rho(\R)$ if and only if $\R$ has pseudokernels.
\

We say that a ringoid $\mathfrak{R}$ is {\bf thick} if $\mathfrak{R}$ is an additive category whose idempotents
split. In this case, Yoneda's functor $Y:\mathfrak{R}\to\Mod_\rho(\mathfrak{R})$ induces an equivalence of categories $\mathfrak{R}\simeq\proj_\rho(\mathfrak{R}).$ Therefore, there is not loss of generality, from the  viewpoint of module categories,  if we work with thick ringoids. A ringoid $\mathfrak{R}$ is {\bf Krull-Schmidt} (KS ringoid, for short) if it is a Krull-Schmidt category (that is an additive category in which every non-zero object decomposes into a finite direct sum of objects having local endomorphism ring). It can be shown \cite[Corollary 4.4]{Krause} that any ringoid is  Krull-Schmidt if it is thick and the endomophism ring of every object is semiperfect.

\begin{lem}\label{Fpp} Let $\R$ be a Krull-Schmidt $\K$-ringoid. Then, any $M\in\finp_\rho(\R)$ has a minimal projective presentation in 
$\proj_\rho(\R).$
\end{lem}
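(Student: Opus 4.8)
The plan is to build the minimal projective presentation directly out of the data of an arbitrary finitely presented presentation, using the Krull--Schmidt hypothesis to extract projective covers. First I would fix a (not necessarily minimal) projective presentation $P_1 \xrightarrow{g} P_0 \xrightarrow{f} M \to 0$ with $P_0, P_1 \in \proj_\rho(\R)$, which exists by definition of $\finp_\rho(\R)$. Since $\R$ is Krull--Schmidt, the endomorphism ring of every object of $\R$ is semiperfect, and consequently every object of $\proj_\rho(\R) \simeq \R$ is a finite direct sum of objects with local endomorphism rings; I would invoke the standard fact that over such a category finitely generated projectives admit projective covers, i.e. every epimorphism from a finitely generated projective onto an object of $\Mod_\rho(\R)$ can be trimmed to an essential one by splitting off a superfluous summand. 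Concretely, I would first replace $P_0 \xrightarrow{f} M$ by a projective cover $P_0' \xrightarrow{f'} M$: decompose $P_0 = \bigoplus_k Q_k$ with each $\End(Q_k)$ local, and discard those summands whose image under $f$ lies in $\rad$-excess, obtaining $P_0' \in \proj_\rho(\R)$ with $f'$ essential.

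Next I would treat the kernel. Let $N := \imagen(g) = \Ker(f') \subseteq P_0'$; composing $g$ with the projection $P_0 \twoheadrightarrow P_0'$ and corestricting gives an epimorphism $P_1 \to N$, so $N$ is finitely generated. Repeating the same trimming argument for this epimorphism yields a projective cover $P_1' \xrightarrow{h} N$ with $P_1' \in \proj_\rho(\R)$. Setting $g' : P_1' \xrightarrow{h} N \hookrightarrow P_0'$ then produces a presentation $P_1' \xrightarrow{g'} P_0' \xrightarrow{f'} M \to 0$ in which $P_0' \to M$ and $P_1' \to \imagen(g') = N$ are both projective covers; by the definition recalled in the excerpt, this is precisely a minimal projective presentation of $M$. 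It remains only to observe that $P_0', P_1' \in \proj_\rho(\R)$, which is immediate since they were obtained as direct summands of $P_0, P_1$.

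The step I expect to be the main obstacle is the existence of the projective covers themselves — i.e. that a finitely generated projective right $\R$-module surjecting onto $M$ can be cut down to an essential epimorphism. In the one-object (algebra) case this is the classical statement that finitely generated modules over a semiperfect ring have projective covers; here one has to run the argument at the level of the ringoid. The cleanest route is to pass through the equivalence $\R \simeq \proj_\rho(\R)$ together with the semiperfectness of each $\End_\R(X)$, and to use that radicals behave well under the Krull--Schmidt decomposition so that a projective summand $Q_k$ of $P_0$ with $f(Q_k) \subseteq \rad$-part of the cover can be split off; this lifting-idempotents-style argument is exactly where the thickness (idempotents split) and semiperfectness packaged in ``Krull--Schmidt ringoid'' are used. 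Once this local-to-global bookkeeping is done, the remainder of the proof is formal.
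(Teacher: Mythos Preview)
Your approach is correct in spirit and identifies the right crux (semiperfect endomorphism rings yield projective covers of finitely generated objects), but it differs from the paper only in that the paper does not carry out the construction: it simply observes that $\End_\R(C)^{op}$ is semiperfect for every $C\in\R$ and then invokes \cite[Corollary 4.13]{Aus}, which is precisely the functor-category version of the classical ``semiperfect $\Rightarrow$ projective covers for finitely presented modules'' statement you are proposing to reprove by hand. So your plan is an unpacking of that citation rather than a genuinely different route.

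One small point to tighten in your sketch: when you pass from $P_0\to M$ to the projective cover $P_0'\to M$, you should arrange the splitting so that the discarded summand $P_0''$ lies inside $\Ker(f)$ (this is how the semiperfect argument actually runs). With that choice one has $\Ker(f)=\Ker(f')\oplus P_0''$, and then your claim that $P_1\xrightarrow{g}P_0\xrightarrow{\pi}P_0'$ surjects onto $\Ker(f')$ is immediate. As stated (``discard summands whose image lies in $\rad$-excess''), the complement need not sit in the kernel, and then $\pi(\Ker f)$ can strictly contain $\Ker(f')$, which would break the corestriction step.
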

\begin{dem} For any $C\in\R,$ we have that $R_C:=\End_\R(C)^{op}$ is a semi-perfect ring. Then, the result follows from \cite[Corollary 4.13]{Aus}.
\end{dem}
\vspace{0.2cm}

Let $\R$ be a thick $\K$-ringoid. For any additive full subcategory $\B$ of $\R,$ we consider the class $I_\B$ of all the morphisms in $\R$ which factor through objects of $\B.$ Note that $I_\B$ is an ideal of $\R,$ and it is known as the ideal associated with $\B.$ For $M,N\in\Mod_\rho(\R),$ we denote by $\mathrm{Tr}_M(N)$ the trace of $M$ in $N.$
\

We say that a $\K$-ringoid $\R$ is {\bf Hom-finite} if the $\K$-module $\Hom_\R(a,b)$ is of finite length, for any $(a,b)\in\R^2.$ A {\bf locally finite}  $\K$-ringoid is a $\K$-ringoid which is Hom-finite and Krull-Schmidt.  A  locally finite $\K$-ringoid with pseudokernels is called  {\bf strong locally finite $\K$-ringoid}.

\begin{lem}\label{LARp1} For a Krull-Schmidt $\K$-ringoid $\R,$ the following statements hold true.
\begin{itemize}
\item[(a)] $\proj_\rho(\R)=\{P=\oplus_{i\in I}\,Y(a_i)\text{ for a finite family } \{a_i\}_{i\in I} \text{ in } \ind\,(\R)\}.$
\item[(b)] $\ind\,(\proj_\rho(\R))=\{Y(a)\;:\; a\in\ind\,(\R)\}.$
\item[(c)] For any $a,b\in\R,$ we have that $Y(a)\simeq Y(b)$ iff $a\simeq b.$
\end{itemize}
\end{lem}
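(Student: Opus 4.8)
The plan is to derive all three statements from Yoneda's Lemma together with the Krull-Schmidt property of $\R$. First I would recall that, since $\R$ is Krull-Schmidt, it is in particular thick (idempotents split) and hence Yoneda's functor $Y:\R\to\Mod_\rho(\R)$ induces an equivalence $\R\simeq\proj_\rho(\R)$, as stated in the excerpt. Under this equivalence, $\ind\,(\R)$ corresponds to $\ind\,(\proj_\rho(\R))$ and direct sum decompositions are preserved, so (b) and (c) will be essentially immediate once (a) is in place. For (c) specifically, fullness and faithfulness of $Y$ give a ring isomorphism $\End_\R(a)\cong\End_{\Mod_\rho(\R)}(Y(a))$, so $Y(a)$ is local iff $a$ is local, and $Y(a)\simeq Y(b)$ iff $a\simeq b$ by full faithfulness; this also shows $Y$ sends $\ind\,(\R)$ bijectively onto a set of iso-classes of local projectives.

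For (a), the inclusion $\add\,(P_\R)\subseteq\proj_\rho(\R)$ is clear because each $Y(a)$ is projective and $\proj_\rho(\R)$ is closed under finite coproducts and direct summands. For the reverse inclusion, I would take $M\in\proj_\rho(\R)$ and use the characterization recalled just before the lemma: $M$ is a direct summand of $\coprod_{i\in I}Y(a_i)$ for some finite family $\{a_i\}_{i\in I}$ of objects of $\R$. Since $\R$ is Krull-Schmidt, each $a_i$ decomposes as a finite direct sum of objects with local endomorphism rings, i.e. of objects from $\ind\,(\R)$; applying the additive functor $Y$, each $Y(a_i)$ becomes a finite direct sum of copies of summands of $P_\R$, hence lies in $\add\,(P_\R)$. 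Therefore $M$ is a direct summand of a finite coproduct of objects in $\add\,(P_\R)$, so $M\in\add\,(P_\R)$.

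Finally, for (b) I would argue both inclusions. Given $a\in\ind\,(\R)$, the element $Y(a)$ is projective and, by full faithfulness, $\End(Y(a))\cong\End_\R(a)$ is local, so $Y(a)\in\ind\,(\proj_\rho(\R))$; moreover distinct classes in $\ind\,(\R)$ give distinct classes by (c). Conversely, if $M\in\ind\,(\proj_\rho(\R))$, then by (a) $M$ is an indecomposable (being local) direct summand of a finite direct sum of copies of objects $Y(a)$ with $a\in\ind\,(\R)$; by the Krull-Schmidt uniqueness of decompositions in $\Mod_\rho(\R)$ (which holds since $\proj_\rho(\R)$ is a Krull-Schmidt category, being equivalent to $\R$), $M$ must be isomorphic to one of these $Y(a)$. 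The main obstacle, though a mild one, is simply being careful about the bookkeeping: ensuring that ``local'' is used consistently on both sides of $Y$ and that the Krull-Schmidt decomposition is invoked in $\R$ (to handle the $a_i$) rather than assumed a priori in $\Mod_\rho(\R)$; once the equivalence $\R\simeq\proj_\rho(\R)$ is invoked, everything transports cleanly.
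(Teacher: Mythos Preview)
Your proposal is correct and follows essentially the same approach as the paper: both arguments rest on the equivalence $\R\simeq\proj_\rho(\R)$ induced by Yoneda's functor (available since a Krull-Schmidt ringoid is thick) together with the Krull-Schmidt decomposition in $\R$. The paper is slightly terser---it proves (c) first by explicitly constructing mutual inverses from $\eta_a(1_a)$ and $\eta_b^{-1}(1_b)$, then deduces (a) and (b) in one line from the equivalence---whereas you invoke full faithfulness directly for (c) and spell out both inclusions in (a) and (b); the content is the same.
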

\begin{dem} We start by proving (c).  Let $\eta:Y(a)\to Y(b)$ be an isomorphism of functors. Consider $f_a\in\Hom_\R(a,b)$ and $g_b\in\Hom_\R(b,a),$
where $f_a:=\eta_a(1_a)$ and $\eta_b(g_b)=1_b.$ By using $\eta:Y(a)\to Y(b),$ it can be shown that $f_a\circ g_b=1_b$ and $g_b\circ f_a=1_a.$
\

The proof of (a) and (b) follows from (c), since $\R$ is a Krull-Schmidt $\K$-ringoid and thus Yoneda's functor $Y:\R\to\Mod_\rho(\R)$ gives an equivalence between  $\R$ and $ \proj_\rho(\R).$
\end{dem}

Let $\R$ be a $\K$-ringoid. For any $c\in\R,$ we consider the opposite ring $\R_c:=\End_\R(c)^{op},$ which is a $\K$-algebra via the morphism of
rings
$$\varphi_c:\K\to \R_c,\quad k\mapsto (k1_c).$$
Therefore, by change of rings, the morphism $\varphi_c$ induces the functor
$$\varphi_c^*:\Mod(\R_c)\to \Mod\,(\K).$$
On the other hand, we have the evaluation functor at $c\in\R$
$$E_c:\Mod_\rho(\R)\to \Mod(\R_c),\quad(M\xrightarrow{\alpha}N)\mapsto (M(c)\xrightarrow{\alpha_c}N(c)),$$
where the structure of left $\R_c$-module on $M(c)$ is given by the action 
\begin{center}$fx:=M(f)(x)$ for $f\in \R_c$ and $x\in M(c).$
\end{center}
Thus, we have the functor
$\varepsilon_c:\Mod_\rho(\R)\to \Mod\,(\K),$ where $\varepsilon_c:=\varphi_c^*\circ E_c.$
\

Let $[\R^{op},\Mod(\K)]$ be the category of $\K$-linear contravariant functors from $\R$ to $\Mod(\K).$
It can be shown, that
$$\varepsilon:\Mod_\rho(\R)\to [\R^{op},\Mod(\K)], (M\xrightarrow{\lambda}N)\mapsto(\varepsilon(M)\xrightarrow{\varepsilon(\lambda)}\varepsilon(N))$$
is an isomorphism of categories, where $\varepsilon(M)(a):=\varepsilon_a(M),$ $\varepsilon(\lambda)_a:=\lambda_a,$ and for any $a\stackrel{f}{\to}b$ in $\R,$
we set $\varepsilon(M)(f):=M(f).$ Hence, in this case, we can identify $\Mod_\rho(\R)$ with $[\R^{op},\Mod(\K)].$
\

Let $M\in\Mod_\rho(\R)$ for some $\K$-ringoid $\R.$ The {\bf support} of $M$ is the set $\Supp\,(M):=\{e\in\ind(\R)\;:\; M(e)\neq 0\}.$ We say that $\R$ is
{\bf right support finite} if $\Supp\,(Y(e))$ is finite for any $e\in\ind(\R),$ where $Y(e):=\Hom_\R(-,e).$

\begin{lem}\label{LARp2} Let $\R$ be a locally finite $\K$-ringoid and $\B$ be an additive full subcategory of $\R.$ Then,  the following statements hold true.
 \begin{itemize}
  \item[(a)] $\mathrm{Tr}_{\{Y(b)\}_{b\in\B}}\,(Y(e))=I_\B(-,e),$ for any $e\in\R.$
 \item[(b)]  If $\R$ is right support finite, then $Y(e)/I_\B(-,e)\in\finp_\rho(\R),$ for any $e\in\ind(\R).$
 \end{itemize}
\end{lem}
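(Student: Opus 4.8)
The plan is to prove (a) first, since (b) will follow quickly from (a) together with the Krull–Schmidt and Hom-finiteness hypotheses. For part (a), recall that $\mathrm{Tr}_{\{Y(b)\}_{b\in\B}}(Y(e))$ is by definition the sum of the images of all morphisms $Y(b)\to Y(e)$ with $b\in\B$; after evaluating at an object $a\in\R$ this is the $\K$-submodule of $Y(e)(a)=\Hom_\R(a,e)$ generated by all the maps $\alpha_a\colon\Hom_\R(a,b)\to\Hom_\R(a,e)$. By Yoneda's Lemma, a natural transformation $\alpha\colon Y(b)\to Y(e)$ is precisely $\Hom_\R(-,h)$ for a unique $h\in\Hom_\R(b,e)$, so $\alpha_a$ is post-composition with $h$. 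Hence, evaluated at $a$, the trace is the $\K$-submodule of $\Hom_\R(a,e)$ spanned by all composites $a\xrightarrow{g}b\xrightarrow{h}e$ with $b\in\B$. That is exactly the set of morphisms $a\to e$ factoring through an object of $\B$, i.e.\ $I_\B(a,e)$. So the plan for (a) is: expand the definition of trace, invoke Yoneda to identify natural transformations out of representables, and observe that the resulting span is $I_\B(-,e)$ by the very definition of the ideal $I_\B$. The only mild point to check is that $I_\B(-,e)$ is closed under $\K$-linear combinations — this holds since $\B$ is additive (so a sum of maps factoring through $b_1$ and $b_2$ factors through $b_1\oplus b_2\in\B$), which is why the trace (a sum of subobjects) coincides with the ideal and not merely with its generating set.

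For part (b), I would use (a) to rewrite the quotient as $Y(e)/I_\B(-,e)$ and exhibit a projective presentation with finitely generated terms. Since $\R$ is right support finite, $\Supp(Y(e))$ is a finite subset of $\ind(\R)$; combined with Hom-finiteness this means $Y(e)$ only involves finitely many indecomposables nontrivially and each $\Hom_\R(a,e)$ has finite length over $\K$. The natural candidate for a projective cover of the subfunctor $I_\B(-,e)\subseteq Y(e)$ is a finite coproduct $\bigoplus_{k} Y(b_k)$ of representables with $b_k\in\ind(\B)$: by (a) and the finite-length/support finiteness, only finitely many $b_k$ are needed so that the induced map $\bigoplus_k Y(b_k)\to Y(e)$ has image exactly $I_\B(-,e)$. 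Then $Y(e)$ itself is finitely generated projective (it is representable, hence in $\proj_\rho(\R)$ by Lemma \ref{LARp1}), giving an exact sequence $\bigoplus_k Y(b_k)\to Y(e)\to Y(e)/I_\B(-,e)\to 0$ with both nonzero terms in $\proj_\rho(\R)$. This is precisely a finite projective presentation, so $Y(e)/I_\B(-,e)\in\finp_\rho(\R)$.

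The main obstacle I anticipate is the finiteness bookkeeping in (b): one must argue carefully that support finiteness of $Y(e)$ plus Hom-finiteness forces $I_\B(-,e)$ to be a finitely generated subfunctor, i.e.\ that finitely many morphisms $g_k\colon a_k\to b_k$ (with $b_k\in\B$) suffice to generate all of $I_\B(a,e)$ as $a$ ranges over $\R$. The point is that $I_\B(-,e)$, as a subfunctor of the finitely presented (indeed representable) $Y(e)$, is supported on the finite set $\Supp(Y(e))$, and at each object of that support it is a $\K$-submodule of something of finite length, hence itself of finite length; choosing $\K$-generators at each of these finitely many objects and realizing each as a composite through $\B$ yields the required finite generating family of morphisms. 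Part (a) is essentially formal once Yoneda is applied; the real content is this local-finiteness argument, together with the observation (used implicitly) that $\R$ thick — which holds since a Krull–Schmidt ringoid is thick — lets us pass freely between $\B$ and $\add(\B)$ and between representables and general finitely generated projectives.
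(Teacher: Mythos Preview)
Your proposal is correct and follows essentially the same route as the paper. For (a) you unfold the trace via Yoneda exactly as expected (the paper simply cites \cite[Lemma~3.1]{Martin}); for (b) both you and the paper argue that right support finiteness plus Hom-finiteness force $I_\B(-,e)$ to be a finitely generated subfunctor of $Y(e)$, yielding a finite projective presentation of the quotient --- the only cosmetic difference is that the paper reduces directly to the finitely many $b\in\ind(\B)\cap\Supp(Y(e))$ via $\Hom(\bigoplus_{b\in\ind(\B)}Y(b),Y(e))=\prod_b Y(e)(b)$, whereas you pass through all of $\Supp(Y(e))$ and then factor each generator through $\B$.
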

\begin{dem} (a) The proof of \cite[Lemma 3.1]{Martin} can be adapted to get (a).
\

(b) Let $\R$ be right support finite and $e\in\ind(\R).$ By (a), we get
$$I_\B(-,e)=\Tr_{\{Y(b)\}_{b\in\B}}\,(Y(e))=\Tr_{\bigoplus_{b\in\ind(\B)}\, Y(b)}\,(Y(e)).$$
Since $\Hom(\bigoplus_{b\in\ind(\B)}\, Y(b),Y(e))=\prod_{b\in\ind(\B)}Y(e)(b)$ and $\R$ is right support finite,  there are some $b_1,b_2,\cdots,b_n$ in
$\ind(\B)$  and $Q:=\oplus_{i=1}^nY(b_i)$   such that
$$\Hom(\bigoplus_{b\in\ind(\B)}\, Y(b),Y(e))=\Hom(Q,Y(e)).$$
Note that  $\Hom(Q,Y(e))$ is a $\K$-module of finite length, since $\R$ is Hom-finite. Therefore $I_\B(-,e)=\Tr_Q(Y(e))$ is a finitely generated right $\R$-module. Finally, by
\cite[Proposition 4.2 (c)]{Aus} we get (b).
\end{dem}

\begin{pro}\label{LARp3}
Let $\R$ be a locally finite $\K$-ringoid. Then $\finp_\rho(\R)$ is a locally finite $\K$-ringoid.
\end{pro}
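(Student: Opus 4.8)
The plan is to check in turn that $\finp_\rho(\R)$ is (i) a skeletally small additive $\K$-category, (ii) Hom-finite, and (iii) Krull--Schmidt; these are exactly the requirements for it to be a locally finite $\K$-ringoid.

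\emph{It is a $\K$-ringoid.} Via the isomorphism $\varepsilon$, $\Mod_\rho(\R)$ is identified with $[\R^{op},\Mod(\K)]$, so $\finp_\rho(\R)$, being a full subcategory, is a $\K$-category, and it is additive because the direct sum of two finitely presented modules is again finitely presented. For skeletal smallness, recall that every object of $\proj_\rho(\R)$ is a direct summand of a finite coproduct $\coprod_i Y(a_i)$ with the $a_i$ objects of $\R$; fixing a skeleton of $\R$ (a set, since $\R$ is skeletally small) one sees that $\proj_\rho(\R)$ has only a set of isomorphism classes, and since every object of $\finp_\rho(\R)$ is the cokernel of a morphism between two objects of $\proj_\rho(\R)$, the class of isomorphism classes of $\finp_\rho(\R)$ is again a set.

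\emph{Hom-finiteness.} The crucial reduction is that $N(a)\in f.\ell(\K)$ for every $N\in\finp_\rho(\R)$ and every $a\in\R$. Indeed, choosing a projective presentation $Q_1\to Q_0\to N\to 0$ with $Q_0,Q_1\in\proj_\rho(\R)$ and evaluating at $a$ exhibits $N(a)$ as a quotient of $Q_0(a)$; writing $Q_0$ as a summand of some $\coprod_{j=1}^m Y(b_j)$ and applying Yoneda's Lemma, $Q_0(a)$ is a summand of $\bigoplus_{j=1}^m \Hom_\R(a,b_j)$, which lies in $f.\ell(\K)$ because $\R$ is Hom-finite; hence $N(a)\in f.\ell(\K)$. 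Now let $M\in\finp_\rho(\R)$ with presentation $P_1\to P_0\to M\to 0$. Applying $\Hom(-,N)$ realizes $\Hom(M,N)$ as a $\K$-submodule of $\Hom(P_0,N)$, and $\Hom(P_0,N)$ is a summand of $\bigoplus_i \Hom(Y(a_i),N)\cong\bigoplus_i N(a_i)\in f.\ell(\K)$; therefore $\Hom(M,N)\in f.\ell(\K)$, i.e. $\finp_\rho(\R)$ is Hom-finite.

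\emph{Krull--Schmidt.} I would deduce this from the characterization \cite[Corollary 4.4]{Krause}: it suffices that $\finp_\rho(\R)$ be thick and that the endomorphism ring of each object be semiperfect. Beyond additivity, thickness requires that idempotents split; since $\Mod_\rho(\R)$ is abelian, an idempotent $e\in\End(M)$ with $M\in\finp_\rho(\R)$ splits there as $M\cong\imagen(e)\oplus\imagen(1_M-e)$, and it remains to see that the two summands are finitely presented. This is the one genuinely non-formal step: if $M\cong A\oplus B$ is finitely presented then $A$ and $B$ are finitely generated, so there are epimorphisms $Q_A\to A$ and $Q_B\to B$ with $Q_A,Q_B$ finitely generated projective; comparing the kernel of $Q_A\oplus Q_B\to M$ with the kernel of a finite presentation of $M$ via Schanuel's lemma shows this kernel is finitely generated, whence its summands $\Ker(Q_A\to A)$ and $\Ker(Q_B\to B)$ are finitely generated and so $A,B\in\finp_\rho(\R)$. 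Finally, for $M\in\finp_\rho(\R)$ the ring $\End(M)$ is a $\K$-algebra that, by Hom-finiteness, has finite length as a $\K$-module; every left ideal of $\End(M)$ is in particular a $\K$-submodule, so $\End(M)$ is left artinian, hence semiperfect. Thus $\finp_\rho(\R)$ is Krull--Schmidt, and together with Hom-finiteness this is precisely the assertion. The main obstacle is this transfer of $\K$-finiteness into semiperfectness of the endomorphism rings, combined with the closure of $\finp_\rho(\R)$ under direct summands; the remaining points are routine bookkeeping.
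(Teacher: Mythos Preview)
Your proof is correct and follows essentially the same route as the paper: first establish Hom-finiteness by evaluating a finite presentation and invoking Yoneda to bound $\Hom(M,N)$ by a finite-length $\K$-module, then deduce Krull--Schmidt via \cite[Corollary 4.4]{Krause} using that each $\End(M)$ is artinian (hence semiperfect). The only notable difference is that for closure under direct summands the paper simply cites \cite[Proposition 4.2(d)]{Aus}, whereas you supply a direct Schanuel-type argument; your version is more self-contained, while the paper's is shorter, but the underlying strategy is the same.
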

\begin{dem}
First, we prove that
$\finp_\rho(\R)$ is  $\Hom$-finite.
 Indeed,  let $F, G\in \finp_\rho(\R).$ Then, there are morphisms $a\stackrel{f}{\to}b$  and $a'\stackrel{f'}{\to}b'$ in $\R$ and  exact sequences in 
 $\Mod_\rho(\R)$
\begin{eqnarray}
Y(a)\xrightarrow{Y(f)}Y(b)\xrightarrow{\lambda} F\to 0,\label{presentationF}\\
Y(a')\xrightarrow{Y(f')}Y(b')\xrightarrow{\lambda'} G\to 0.\label{presentationG}
\end{eqnarray}
By (\ref{presentationG}) we get an epimorphism $\Hom_\R(b,b')\xrightarrow{\lambda'_b} G(b)$ of
$\K$-modules,  and since
$\R$ is Hom-finite, we get that $G(b)$ is a $\K$-module of finite length. By applying the functor $(-,G)$ to the sequence (\ref {presentationF}), we obtain
 a monomorphism $(\lambda,G):(F,G)\to (Y(b),G)$ of $\K$-modules. Therefore $(F,G)$ is of finite length since $(Y(b),G)\simeq G(b).$ In particular $\End(M)$  is
 a left Artin ring for any $M\in \finp_\rho(\R).$
\

Now, we prove that $\finp_\rho(\R)$  is a  Krull-Schmidt $\K$-category. By \cite[Proposition 4.2 (d)]{Aus}, it follows that the idempotents in $\finp_\rho(\R)$ split, and moreover it  is an additive category. Finally, from \cite[Corollary 4.4]{Krause}, we get that $\finp_\rho(\R)$  is a  Krull-Schmidt $\K$-category since $\End(M)$  is a semi-perfect  ring, for any $M\in \finp_\rho(\R).$
\end{dem}
\vspace{0.2cm}

{\sc Filtrations.}
Let $\A$ be an abelian category and $\ltt{X}\subseteq \A.$ We denote by $\ltt{X}^{\oplus}$ the class of all the objects of $\A$ which are a finite direct sum of
objects in $\ltt{X}.$
\

 We say that $M\in \A$ is
$\ltt{X}$-$\textbf{filtered}$ if there exists a continuous chain $\{M_{i}\}_{i<\alpha}$ of subobjects
 of $M,$ for some ordinal number $\alpha,$ such that $M_{i+1}/M_{i}\in \ltt{X}^{\oplus}$ for $i+1\leq\alpha.$
In case $\alpha<\aleph_0,$ we say that $M$ has a {\bf finite $\X$-filtration} of length $\alpha.$ We denote by $\ltt{F}(\ltt{X})$ the class of
objects which are $\ltt{X}$-filtered and by
$\ltt{F}_f(\ltt{X})$ the class of objects  having a  finite filtration. Note that, for $M\in \ltt{F}_{f}(\X),$ the $\X$-length of $M$ can be defined as follows
$$\ell_{\X}(M):=\mathrm{min}\,\{n\in \mathbb{N}\mid M\,\,
\text{has an $\X$-filtration of length}\,\, n\}$$
\

 By using the notion of $\X$-length and induction, it can be proven the following useful remark.
\begin{rk}\label{filclodesext}
Let $\ltt{X}$ be a class of objects in an abelian category $\A.$ Then, the class $\ltt{F}_f(\ltt{X})$ is closed under extensions.
\end{rk}

\section{Standardly stratified ringoids}

In this section we define the concept of standardly stratified algebra for the class of rings with several objects.  We also prove some main properties which generalize several well known facts from the classical theory of stardardly stratified algebras.

\begin{defi} Let $\R$ be  a Krull-Schmidt $\K$-ringoid and $\C\subseteq\R$ be a class of objects of $\R$ such that $\add\,(\C)=\C.$ Let  $\tilde{\A}:=\{\tilde{\A}_{i}\}_{i<\alpha}$
be a  partition of the set $\ind(\C),$ where  $\alpha$ is an ordinal number (the size of the partion $\tilde{\A}$). For each $i\in[0,\alpha),$ we set $\A_i:=\bigcup_{j\leq i}\,\tilde{\A_j}$ and
$\B_i(\A):=\add\,(\A_i).$ We say that $\B(\A):=\{\B_i(\A)\}_{i<\alpha}$ is the family of subcategories of $\C$ related to the partition $\tilde{\A}.$ We
denote by $\wp(\C)$ the class of all the partitions of the set $\ind\,(\C).$
\end{defi}

\begin{defi}\label{AS} Let $\R$ be  a Krull-Schmidt $\K$-ringoid and $\C\subseteq\R$ be such that $\add\,(\C)=\C.$
Let $\B:=\{\B_{i}\}_{i<\alpha}$ be a family of subcategories of $\C,$ where $\alpha$ is an  ordinal number (the size of the family $\B$). We say that
$\B$ is $\textbf{admissible}$ in $\C,$ if the following conditions hold true:
\begin{enumerate}
\item [(a)] $\add\,(\B_{i})=\B_{i}$ for any $i<\alpha;$
\item [(b)] $\B_{i}\subseteq \B_{j}$ if $i\leq j<\alpha;$
\item[(c)]  $\C=\bigcup_{i<\alpha}\,\B_i;$
\item [(d)] $\sigma_{i}(\B):=\ind(\B_{i})-\bigcup_{j<i}\,\ind(\B_{i})\neq\emptyset$ for any $ i< \alpha.$
\end{enumerate}
We call  $\sigma_{i}(\B)$ the {\bf $i$-th section} of $\B.$ An admissible family $\B$ in $\C$ is said to be $\textbf{exhaustive}$ in $\R,$ if $\C=\R.$ We set
$\sigma(\B):=\{\sigma_{i}(\B)\}_{i<\alpha}.$ The class of all the admissible families of subcategories of $\C$ will be denoted by $\AF(\C).$
\end{defi}

\begin{pro}\label{P=AS} Let $\R$ be  a Krull-Schmidt $\K$-ringoid and $\C\subseteq\R$ be a class of subobjects of $\R$ such that $\add\,(\C)=\C.$ Then, the correspondence
$\sigma:\AF(\C)\to \wp(\C),$ $\B\mapsto \sigma(\B),$ is bijective with inverse $\tilde{\A}\mapsto \B(\A).$
\end{pro}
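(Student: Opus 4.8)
The plan is to show that the two correspondences $\sigma\colon\AF(\C)\to\wp(\C)$ and $\tilde{\A}\mapsto\B(\A)$ are mutually inverse bijections, by checking (i) that each correspondence lands in the claimed target, (ii) that $\sigma\circ(\tilde{\A}\mapsto\B(\A))=\mathrm{id}_{\wp(\C)}$, and (iii) that $(\tilde{\A}\mapsto\B(\A))\circ\sigma=\mathrm{id}_{\AF(\C)}$. First I would verify that, for a partition $\tilde{\A}=\{\tilde{\A}_i\}_{i<\alpha}$ of $\ind(\C)$, the family $\B(\A)=\{\B_i(\A)\}_{i<\alpha}$ with $\B_i(\A)=\add(\A_i)$ and $\A_i=\bigcup_{j\le i}\tilde{\A}_j$ is indeed admissible in $\C$: condition (a) is immediate since $\add\add=\add$; condition (b) holds because $\A_i\subseteq\A_j$ when $i\le j$; condition (c) follows from $\C=\add(\C)=\add(\ind(\C))$ (using that $\R$ is Krull--Schmidt so every object of $\C$ is a finite sum of objects in $\ind(\C)$, hence $\ind(\C)=\bigcup_i\tilde{\A}_i$ gives $\C=\bigcup_i\add(\A_i)$); and condition (d) holds because $\ind(\B_i(\A))=\A_i$ (a key sublemma: the indecomposables-with-local-endomorphism-ring of $\add(\A_i)$ are exactly the objects iso to those in $\A_i$, since $\R$ is Krull--Schmidt), so $\sigma_i(\B(\A))=\A_i\setminus\bigcup_{j<i}\A_j=\tilde{\A}_i\neq\emptyset$ because $\tilde{\A}$ is a partition with nonempty blocks.

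That last computation simultaneously proves (ii): $\sigma_i(\B(\A))=\tilde{\A}_i$ for all $i<\alpha$, so $\sigma(\B(\A))=\tilde{\A}$. For (iii), start with an admissible family $\B=\{\B_i\}_{i<\alpha}$ and set $\tilde{\A}:=\sigma(\B)$, i.e. $\tilde{\A}_i=\sigma_i(\B)=\ind(\B_i)\setminus\bigcup_{j<i}\ind(\B_j)$. I must check that $\tilde{\A}$ is a partition of $\ind(\C)$ and that $\B(\A)=\B$. The blocks are nonempty by admissibility (d); they are pairwise disjoint by construction; and their union is $\bigcup_{i<\alpha}\ind(\B_i)=\ind(\bigcup_{i<\alpha}\B_i)=\ind(\C)$ using (b), (c) and again the Krull--Schmidt sublemma that $\ind$ commutes with the relevant unions. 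Hence $\tilde{\A}\in\wp(\C)$. To see $\B_i(\A)=\B_i$, note $\A_i=\bigcup_{j\le i}\tilde{\A}_j=\bigcup_{j\le i}\sigma_j(\B)$; a short transfinite induction on $i$, using disjointness of the sections and $\ind(\B_j)\subseteq\ind(\B_i)$ for $j\le i$, shows $\bigcup_{j\le i}\sigma_j(\B)=\ind(\B_i)$, so $\B_i(\A)=\add(\ind(\B_i))=\B_i$ by (a) and Krull--Schmidt.

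The only real content beyond bookkeeping is the sublemma that in a Krull--Schmidt $\K$-ringoid $\R$, for any $\C$ with $\add(\C)=\C$ and any subclass $\A\subseteq\ind(\C)$ one has $\ind(\add(\A))=\A$ (up to iso-classes), together with the compatibility of $\ind$ with increasing unions of $\add$-closed subcategories. This is essentially the uniqueness of Krull--Schmidt decompositions: an object of $\add(\A)$ with local endomorphism ring is indecomposable, hence by Krull--Schmidt is isomorphic to a summand of a finite sum of objects of $\A$, and again by uniqueness of the decomposition it is isomorphic to one of the $\A$'s. I expect this step to be the main (though still routine) obstacle, since everything else is formal manipulation of sets and $\add$-closures. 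I would state this sublemma explicitly, prove it in one or two lines from the Krull--Schmidt property, and then assemble (i)--(iii) above. The remaining verifications are straightforward and I would present them compactly.
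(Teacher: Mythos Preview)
Your proposal is correct and follows essentially the same approach as the paper's proof: both verify that each map lands in the claimed target and that the two composites are identities, via the same set-theoretic manipulations of $\ind(\B_i)$ and the sections $\sigma_i(\B)$. The only presentational differences are that you isolate the Krull--Schmidt sublemma $\ind(\add(\A))=\A$ explicitly (the paper uses it implicitly), and that where you invoke a short transfinite induction to show $\bigcup_{j\le i}\sigma_j(\B)=\ind(\B_i)$, the paper gives a direct argument by taking, for each $X\in\ind(\B_i)$, the minimum $k$ with $X\in\ind(\B_k)$.
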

\begin{dem} {\it{From admissible families to partitions}}: Let $\B=\{\B_i\}_{i<\alpha}$ be an admissible family in $\C.$ We prove that $\sigma(\B)$ is a partition of
 $\ind(\C)$ and $\B(\sigma(\B))=\B.$ By the definition of admissible families, we  have that $\sigma_{i}(\B)$ is not empty. Furthermore, by Definition \ref{AS} (b) and (d), we get that
 \begin{center}
 $\sigma_{i}(\B)=\bigcap_{j<i}\,(\ind(\B_{i})-\ind(\B_{j})),$ for any $i<\alpha.$
 \end{center}

 Let us check that $\ind\,(\C)=\bigcup_{i<\alpha}\,\sigma_i(\B).$ Consider $X\in\ind\,(\C).$ Then, by Definition \ref{AS} (c) there is some $j<\alpha$ such that
 $X\in\ind\,(\B_j)$ and thus
 the set $S:=\{j<\alpha\;:\;X\in\ind\,(\B_j)\}$ is not empty. Now, for $k:=\min S$ it follows that $X\in\ind\,(\B_k)$ and $X\not\in\ind\,(\B_j)$ for any $j<k,$
 which means that $X\in\sigma_k(\B).$
 \

 We show that $\sigma_k(\B)\cap\sigma_l(\B)=\emptyset$ for $k<l<\alpha.$ Indeed, suppose that there is some $X\in\sigma_k(\B)\cap\sigma_l(\B).$ In particular $X\in \sigma_l(\B)$ and thus, for any $j<l$ $X\in \ind\,(\B_l)$ and $X\not\in\ind\,(\B_j).$ But, for $j=k,$ the former conditions say
 that $X\not\in\ind\,(\B_k),$ contradicting that $X\in\sigma_k(\B).$
 \

 Let $D:=\B(\sigma(\B)).$ We assert that $D=\B.$ Consider some $i<\alpha.$ By definition, we have
 $$D_i:=\add\,(\bigcup_{j\leq i}\sigma_j(\B))=\add\,\big(\bigcup_{j\leq i}\big( \ind\,(\B_j)- \bigcup_{k<j}\,\ind\,(\B_k) \big) \big).$$
 Therefore, in order to prove that $D_i=\B_i,$  it is enough to show that
 $$\ind\,(\B_i)= \bigcup_{j\leq i}\big( \ind\,(\B_j)- \bigcup_{k<j}\,\ind\,(\B_k) \big) .$$
 Let $X\in \ind\,(\B_i).$ Thus, the set $S_X:=\{j\leq i<\alpha\;:\; X\in\ind\,(\B_j)\}$ is not empty. Then for $k_0:=\min S_X,$ we get that
 $X\in \ind\,(\B_{k_0})$ and $X\not\in\ind\,(\B_l)$ for any $l<k_0.$ Therefore, for $k_0\leq i$ we obtain that
 $X\in \ind\,(\B_{k_0})$ and $X\not\in\bigcup_{l<k_0}\ind\,(\B_l).$ This says that
 $X\in \bigcup_{j\leq i}\big( \ind\,(\B_j)- \bigcup_{k<j}\,\ind\,(\B_k) \big),$ proving that $D_i=\B_i.$
 \

 {\it{From partitions to admissible families}}: Let $\tilde{\A}=\{\tilde{\A}_i\}$ be a partition of the set $\ind\,(\C).$ Let $\B(\A)=\{\B_i(\A)\}_{i<\alpha}$ be
 the family of subcategories of $\C$ related to the partition $\tilde{\A}.$ Note that \ref{AS} (a), (b) and (c) hold true by construction. In order to prove that
 $\B(\A)\in\AF(\C)$ and $\sigma(\B(\A))=\tilde{\A},$ it is enough to show that $\tilde{\A}_i=\sigma_i(\B(\A))$ for any $i<\alpha.$
 \

 Let $i<\alpha.$ For any $X\in\ind\,(\C), $ we assert that
 \[(*)\quad X\in\sigma_i(\B(\A))\quad \Leftrightarrow\quad \forall\,j<i,\;\exists\,k\in(j,i]\;\text{such that}\,X\in\tilde{\A}_k. \]
 Indeed, the assertion above follows from the following sequel of equivalences
 \begin{align*}
 X\in\sigma_i(\B(\A))& \quad \Leftrightarrow\quad X\in \bigcap_{j<i}\big(\ind\,\B_i(\A) - \ind\,\B_j(\A)\big)\\
    & \quad \Leftrightarrow\quad \forall\; j<i\quad X\in\bigcup_{k\leq i}\tilde{\A}_k\quad\text{and}\quad X\not\in\bigcup_{l\leq j}\tilde{\A}_k\\
    & \quad \Leftrightarrow\quad \forall\,j<i,\;\exists\,k\in(j,i]\;\text{such that}\,X\in\tilde{\A}_k.
 \end{align*}
 By $(*),$ it is clear that $\tilde{\A}_i\subseteq \sigma_i(\B(\A)).$ Let $X\in \sigma_i(\B(\A)).$ Then by $(*)$ there is some $k\in(j,i]$ such that
 $X\in\tilde{\A}_k.$ Suppose that $k<i.$ Then, again by $(*)$ there is some $k'\in(k,i]$ such that $X\in\tilde{\A}_{k'}.$ Therefore
 $X\in\tilde{\A}_{k}\cap\tilde{\A}_{k'},$ contradicting that $\tilde{\A}$ is a partition of $\ind\,(\C).$ Then, $k=i$ and thus $X\in\tilde{\A}_i.$
\end{dem}

Associated to a partition $\tilde{\A}$ of $\ind\,(\C),$ as above, we can compute the $(\tilde{\A},\C)$-standard right $\R$-modules. These modules play an important role in the definition of a right  standardly stratified ringoid. In order to define such modules, we consider the Yoneda's contravariant
functor $Y:\R\to \Mod_\rho(\R),$ where $Y(e):=\Hom_\R(-,e).$

\begin{defi} Let $\R$ be  a Krull-Schmidt $\K$-ringoid and $\C\subseteq\R$ be a class of objects of $\R$ such that $\add\,(\C)=\C,$ and let $\tilde{\A}=\{\tilde{\A}_i\}_{i<\alpha}$ be
a partition of the set $\ind\,(\C).$ Consider the projective right $\R$-modules
$P^{op}_e(i):=Y(e)$ for $e\in\tilde{\A}_i$ and $i<\alpha.$ Let $P^{op}=P^{op}(\tilde{\A}):=\{ P^{op}(i)\}_{i<\alpha}$ where
$P^{op}(i):=\{P^{op}_e(i)\}_{e\in\tilde{\A}_i}.$ We say that $P^{op}(\tilde{\A})$ is the family of projective modules associated with the partition
$\tilde{\A}.$   We define the family ${}_{(\tilde{\A},\C)}\Delta=\{\Delta(i)\}_{i<\alpha}$ of $(\tilde{\A},\C)$-standard right $\R$-modules,  where
$\Delta(i):=\{\Delta_e(i)\}_{e\in\tilde{\A}_i}$ is defined as follows
$$\Delta_{e}(i):=\frac{P^{op}_{e}(i)}{\mathrm{Tr}_{\oplus_{j<i}\overline{P}(j)}(P^{op}_{e}(i))},$$
where $\overline{P}(j):=\bigoplus_{r\in\tilde{\A}_j}\,P^{op}_r(j)$ and  $\mathrm{Tr}_{\oplus_{j<i}\overline{P}(j)}(P^{op}_{e}(i))$ is the trace of
$\oplus_{j<i}\overline{P}(j)$ in $P^{op}_{e}(i).$ In case $\R=\C,$ we just write ${}_{\tilde{\A}}\Delta$ instead of ${}_{(\tilde{\A},\C)}\Delta,$ and we say
that ${}_{(\tilde{\A},\C)}\Delta$ is the family of $\tilde{\A}$-standard right $\R$-modules.
 \end{defi}

\begin{defi} Let $\R$ be  a Krull-Schmidt $\K$-ringoid and $\C\subseteq\R$ be a class of objects of $\R$ such that $\add\,(\C)=\C.$ For any admissible family $\B=\{\B_i\}_{i<\alpha}$
of subcategories of $\C,$ we know by Proposition \ref{P=AS} that $\sigma(\B)$ is a partition of $\ind\,(\C).$ Then,
${}_{(\B,\C)}\Delta:={}_{(\sigma(\B),\C)}\Delta$  is called the family of
$(\B,\C)$-standard $\R$-modules. In case $\R=\C,$ we just write ${}_{\B}\Delta$ instead of
${}_{(\B,\C)}\Delta,$ and we say
that ${}_{\B}\Delta$ is the family of $\B$-standard right $\R$-modules.
\end{defi}

 Let $\R$ be  a Krull-Schmidt $\K$-ringoid and $\C\subseteq\R$ be a class of objects of $\R$ such that $\add\,(\C)=\C.$ For any partition $\tilde{\A}$ 
 of $\ind\,(\C),$ we point out that by Proposition \ref{P=AS}, it holds that ${}_{(\B(\tilde{\A}),\C)}\Delta={}_{(\tilde{\A},\C)}\Delta.$

\begin{defi}\label{standardly stra ringoid} A right {\bf standardly  stratified $\K$-ringoid} is a pair $(\mathfrak{R},\tilde{\A}),$
where $\mathfrak{R}$ is a Krull-Schmidt $\K$-ringoid and $\tilde{\A}$ is a partition of $\ind\,(\R)$ such that the  $\tilde{\A}$-standard family
$\Delta={}_{\tilde{\A}}\Delta$ of right $\R$-modules satisfies the following condition,  for any $i<\alpha$ and $e\in\tilde{\A}_i,$
$$\mathrm{Tr}_{\oplus_{j<i}\overline{P}(j)}(P^{op}_{e}(i))\in \ltt{F}_f(\bigcup_{j<i}\Delta(j)).$$
\end{defi}

\begin{defi}\label{quasihedringoid} A right standardly stratified
$\K$-ringoid $(\mathfrak{R},\tilde{A})$ is {\bf  quasi-hereditary} if $\End(\Delta_{e}(i))$ is a division ring, for any $e\in\tilde{\A}_i$ and $i<\alpha.$
\end{defi}

Let $\Lambda$  be a basic Artin $\K$-algebra and let $\{e_i\}_{i=1}^n$ be a complete family of primitive ortogonal idempotents of $\Lambda.$ There is
a $\K$-ringoid $\R(\Lambda),$ associated to $\Lambda,$  where the objects are $e_1,e_2,\cdots,e_n$ and the morphisms  are
$\Hom_{\R(\Lambda)}(e_i,e_j):=e_j\Lambda e_i$ for any $1\leq i,j\leq n.$ The composition of morphism  in $\R(\Lambda)$
is just the multiplication in $\Lambda.$ Note that $\ind\,\R(\Lambda)=\{e_1,e_2,\cdots,e_n\}.$
We have the canonical isomorphism of categories
$$\delta:\Mod_\rho(\R(\Lambda))\to\Mod(\Lambda^{op}),\quad M\mapsto \oplus_{i=1}^n M(e_i).$$
For the Yoneda's functor $Y:\R(\Lambda)\to \Mod_\rho(\R(\Lambda)), $ we have
$$\delta(Y(e_i))=\oplus_{j=1}^n\Hom_{\R(\Lambda)}(e_j,e_i)=\oplus_{j=1}^n e_i\Lambda e_j= e_i\Lambda.$$
Let  $\tilde{\A}_i:=\{e_i\},$ $P(i):=Y(e_i)$ and $P:=\{\{P(i)\}\}_{i=1}^n.$ Consider the standard modules 
${}_{\R(\Lambda)}\Delta:={}_{\tilde{\A}}\Delta.$ Note that $\delta({}_{\R(\Lambda)}\Delta(i))\simeq{}_{\Lambda}\Delta(i)$ for any
$i\in[1,n].$ Therefore $(\R(\Lambda),\tilde{\A})$ is a right standardly stratified $\K$-ringoid if, and only if,
 $\Lambda$ is a right standardly stratified $\K$-algebra as in the classical sense.\\

We recall that for a given abelian category $\A$ and $\ltt{X}\subseteq \A,$ $\ltt{X}^{\oplus}$ denotes the class of all the objects of $\A$ which are a finite direct sum of
objects in $\ltt{X}.$
\

\begin{pro}\label{DSuppFin} Let $\R$ be  a Krull-Schmidt $\K$-ringoid and $\C\subseteq\R$ be a class of objects in $\R$ such that $\add\,(\C)=\C.$ For any admissible family
$\B=\{\B_{i}\}_{i< \alpha}$ of subcategories of $\C,$ the following statements hold true.
\begin{itemize}
\item[(a)] For any $e\in\sigma_i(\B),$ we have
$${}_{(\B,\C)}\Delta_e(i)=\frac{Y(e)}{\Tr_{\{Y(t)\}_{t\in{\bigcup_{j< i}\B_{j}}}}(Y(e))}.$$
Moreover, if $\R$ is locally finite, then
$$\Tr_{\{Y(t)\}_{t\in{\bigcup_{j< i}\B_{j}}}}(Y(e))=I_{\cup_{j< i}\B_{j}}(-,e)\quad\text{and}\quad {}_{(\B,\C)}\Delta_e(i)\neq 0.$$
\item[(b)] If $\R$ is  locally finite and right support finite, then
${}_{(\B,\C)}\Delta^\oplus\subseteq\finp_\rho(\R).$
\end{itemize}
\end{pro}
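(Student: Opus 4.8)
The plan is to unravel the definition of ${}_{(\B,\C)}\Delta$ and then reduce everything to Lemma~\ref{LARp2}. Since ${}_{(\B,\C)}\Delta := {}_{(\sigma(\B),\C)}\Delta$, writing $\tilde{\A}_j := \sigma_j(\B)$ we have $\overline{P}(j) = \bigoplus_{r\in\sigma_j(\B)}Y(r)$ and
$${}_{(\B,\C)}\Delta_e(i) = \frac{Y(e)}{\Tr_{\oplus_{j<i}\overline{P}(j)}(Y(e))},$$
so the first assertion of (a) amounts to identifying $\Tr_{\oplus_{j<i}\overline{P}(j)}(Y(e))$ with $\Tr_{\{Y(t)\}_{t\in\bigcup_{j<i}\B_j}}(Y(e))$. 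First I would use that the trace of a direct sum of modules is the sum of the traces, obtaining $\Tr_{\oplus_{j<i}\overline{P}(j)}(Y(e)) = \sum_{r\in\bigcup_{j<i}\sigma_j(\B)}\Tr_{Y(r)}(Y(e))$.

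Next I would set $\B_{<i} := \bigcup_{j<i}\B_j$ and establish the identities $\bigcup_{j<i}\sigma_j(\B) = \bigcup_{j<i}\ind(\B_j) = \ind(\B_{<i})$: the first follows from the relation $\ind(\B_i) = \bigcup_{j\le i}\sigma_j(\B)$ proved inside Proposition~\ref{P=AS}, and the second is immediate from the definition of $\ind$. Here one checks that $\B_{<i}$ is an additive full subcategory of $\R$ with $\add(\B_{<i}) = \B_{<i}$ — a finite coproduct of objects from $\B_{j_1},\dots,\B_{j_n}$ lies in $\B_{j^*}$ with $j^* = \max_k j_k < i$, since the maximum of finitely many ordinals below $i$ is below $i$. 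As $\R$ is Krull-Schmidt, every object of $\B_{<i}$ is a finite coproduct of local objects, each again lying in $\B_{<i}$; therefore $\sum_{r\in\ind(\B_{<i})}\Tr_{Y(r)}(Y(e)) = \Tr_{\{Y(t)\}_{t\in\B_{<i}}}(Y(e))$, which proves the first formula in (a). For the ``moreover'' clause, assuming $\R$ locally finite, Lemma~\ref{LARp2}(a) applied to the additive subcategory $\B_{<i}$ gives $\Tr_{\{Y(t)\}_{t\in\B_{<i}}}(Y(e)) = I_{\B_{<i}}(-,e)$.

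To see ${}_{(\B,\C)}\Delta_e(i)\neq 0$, note that $e\in\sigma_i(\B)$ means $e$ is local and $[e]\notin\ind(\B_{<i})$; if $1_e\in I_{\B_{<i}}(e,e)$ then $1_e$ factors through an object $B\in\B_{<i}$, exhibiting $e$ as a direct summand of $B$, so $e\in\B_{<i}$ and $[e]\in\ind(\B_{<i})$, a contradiction; hence $\Delta_e(i)(e) = \End_\R(e)/I_{\B_{<i}}(e,e)\neq 0$. For (b) it suffices to show each $\Delta_e(i)\in\finp_\rho(\R)$, because a finite direct sum of finitely presented modules is finitely presented (e.g.\ by Proposition~\ref{LARp3}); and since by (a) one has $\Delta_e(i) = Y(e)/I_{\B_{<i}}(-,e)$ with $e\in\sigma_i(\B)\subseteq\ind(\C)\subseteq\ind(\R)$ and $\B_{<i}$ an additive full subcategory of $\R$, this is exactly Lemma~\ref{LARp2}(b), valid since $\R$ is locally finite and right support finite.

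The step I expect to require the most care is the bookkeeping at limit ordinals in the second paragraph — verifying that $\B_{<i}$ is closed under direct summands and that the trace over the whole class $\{Y(t)\}_{t\in\B_{<i}}$ coincides with the trace over $\{Y(r)\}_{r\in\ind(\B_{<i})}$ — but this is routine once the Krull-Schmidt hypothesis is invoked, and all the genuinely substantive content is already encapsulated in Lemma~\ref{LARp2} and Proposition~\ref{P=AS}.
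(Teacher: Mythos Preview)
Your proposal is correct and follows essentially the same route as the paper. The only organizational difference is that where the paper proves directly the identity $\add\big(\bigcup_{j<i}\sigma_j(\B)\big)=\bigcup_{j<i}\B_j$ as an intermediate step, you instead pass through $\bigcup_{j<i}\sigma_j(\B)=\ind(\B_{<i})$ by invoking the computation inside Proposition~\ref{P=AS}; the remaining arguments (application of Lemma~\ref{LARp2}, the nonvanishing via $1_e\notin I_{\B_{<i}}(e,e)$, and part~(b)) are identical.
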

\begin{dem} (a) We have ${}_{(\B,\C)}\Delta={}_{(\sigma(\B),\C)}\Delta$ and $\sigma_i(\B)=\ind(\B_i)-\bigcup_{j<i}\,\ind(\B_j).$ We assert
that
\[(*)\quad\add\big (\bigcup_{j<i}\,\sigma_j(\B)\big)=\bigcup_{j<i}\,\B_j. \]

Indeed,  $\add(\bigcup_{j<i}\B_{j})=\bigcup_{j<i}\B_{j}$ since $\B_{j}\subseteq \B_{j'}$ if $j\leq j'$ and $\add(\B_{j})=\B_{j}$
for every $j$. Now, using $\sigma_{j}(\B)\subseteq \B_{j}$ for every $j$, it follows that
$\bigcup_{j<i}\sigma_{j}(\B)\subseteq \bigcup_{j<i} \B_{j}$. Then we have that
$\add\big (\bigcup_{j<i}\,\sigma_j(\B)\big)  \subseteq \add(\bigcup_{j<i}\B_{j})=\bigcup_{j<i}\B_{j}$.
Now, let $X\in \bigcup_{j<i}\B_{j}$, then there exists $j'<i$ such that $X\in \B_{j'}$.
Thus $X=\bigoplus_{k=1}^{n} X_{k}$  with $X_{k}\in B_{j'}$ and local. For
each $X_{k}$ consider the set $S(X_{k}):=\{j<i\;:\; X_{k}\in \B_j\}$ which is not empty. For $j_{k}:=\min S(X_{k}),$ it follows that
$X_{k}\in(\ind (\B_{j_{k}})-\bigcup_{j<j_{k}}\,\ind(\B_j))=\sigma_{j_{k}}(\B)$ and therefore
$X\in \add\big (\bigcup_{j<i}\,\sigma_j(\B)\big);$ proving $(*).$

Using $(*)$ and  $\overline{P}(j)=\bigoplus_{r\in\sigma_j(\B)}\,P^{op}_r(j)=\bigoplus_{r\in\sigma_j(\B)}\,Y(r),$   we obtain the following sequence of equalities
\[
\begin{split}
\text{{\LARGE Tr}}_{_{\displaystyle {}_{\bigoplus_{\!\!\!\!\!\!\!\!\!_{\displaystyle {}_{_{j<i}}}}\overline{P}(j)}}}(P^{op}_e(i)) & =
\quad \quad\text{{\LARGE Tr}}_{_{\displaystyle {}_{\big\{\bigoplus_{\!\!\!\!\!\!\!\!\!_{\displaystyle {}_{_{j<i}}}}
\bigoplus_{\!\!\!\!\!\!\!\!\!_{\displaystyle {}_{_{r\in \sigma_{j}(B)}}}}\!\!\!\!\!\!\!\!\!\!Y(r)\big\}}}}
(Y(e))\\
& =\quad \quad \text{{\LARGE Tr}}_{_{\displaystyle {}_{{\big\{Y(t)\big \}}_{t\in \bigcup_{\!\!\!\!\!\!_{\displaystyle {}_{_{j<i}}}}\!\!\!\!\sigma_{j}(B)}}}}\!\!\!\!\!\!\!\!\!\!\!\!\!\!\!\!(Y(e))\\
& =\quad \quad \text{{\LARGE Tr}}_{_{\displaystyle {}_{{\big\{Y(t)\big \}}_{t\in \mathrm{add}(\bigcup_{\!\!\!\!\!\!_{\displaystyle {}_{_{j<i}}}}\!\!\!\!\sigma_{j}(B))}}}}\!\!\!\!\!\!\!\!\!\!\!\!\!\!\!\!\!\!\!\!\!\!\!\!\!\!\!(Y(e))\\
& =\quad \quad \text{{\LARGE Tr}}_{_{\displaystyle {}_{{\big\{Y(t)\big \}}_{t\in \bigcup_{\!\!\!\!\!\!_{\displaystyle {}_{_{j<i}}}}\!\!\!\!B_{j}}}}}\!\!\!\!\!\!\!\!\!\!\!\!(Y(e)).
\end{split}
\]

Let $\R$ be locally finite. Then by Lemma \ref{LARp2} (a),  $\Tr_{\{Y(t)\}_{t\in{\bigcup_{j< i}\B_{j}}}}Y(e)$ is equal to $I_{\cup_{j< i}\B_{j}}(-,e).$ We assert that $\Delta_e(i)\neq 0.$ In order to prove this, it is enough to see that $\Delta_e(i)(e)\neq 0.$
\

Suppose that $\Delta_e(i)(e)=0.$ Then $I_{\cup_{j< i}\B_{j}}(e,e)=\R(e,e)$ and thus $1_e\in I_{\cup_{j< i}\B_{j}}(e,e).$ Therefore $1_e$ factorizes
through some $X\in\B_j,$ where $j<i.$ Then $e$ is a direct summand of $X$ and so $e\in\B_j,$ contradicting that $e\in\sigma_i(\B).$

(b)  Let $\R$ be  locally finite and right support finite.  By Lemma \ref{LARp2} (b),   the item (a) and \cite[Proposition 4.2 (d)]{Aus},  we get
${}_\B\Delta^\oplus\subseteq\finp_\rho(\R).$
\end{dem}

\begin{defi} Let $\B=\{\B_{i}\}_{i< \alpha}$ be an admissible family of subcategories of $\C\subseteq\R,$ for some Krull-Schmidt $\K$-ringoid $\R.$
Let  $\Delta={}_{(\B,\C)}\Delta$ be the $(\B,\C)$-standard family of right $\R$-modules. We say that $M\in \ltt{F}'_{f}(\Delta)$ if there exists a filtration
$0=M_{0}\subseteq M_{1}\subseteq \dots  \subseteq M_{n-1}\subseteq M_{n}=M$
 such that $M_{i}/M_{i-1}\in \Delta(s_i)^{\oplus},$ for some $s_i<\alpha$ and $i\in[1,n].$
\end{defi}

\begin{pro}\label{f=f'}  Let $\B=\{\B_{i}\}_{i<\alpha}$ be an admissible family of subcategories of $\C\subseteq\R,$ for some Krull-Schmidt $\K$-ringoid $\R,$  and let $\Delta={}_{(\B,\C)}\Delta.$ Then,
$\ltt{F}_{f}(\Delta)=\ltt{F}'_{f}(\Delta)$.
\end{pro}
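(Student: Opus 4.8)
The plan is to prove the two inclusions separately, the reverse one being the only nontrivial part. First I would record that, viewed as a class of objects, $\Delta={}_{(\B,\C)}\Delta$ is simply $\bigcup_{i<\alpha}\Delta(i)=\{\Delta_e(i)\;:\;i<\alpha,\ e\in\sigma_i(\B)\}$, so that $\Delta^{\oplus}$ consists of all finite direct sums $\bigoplus_{k=1}^{m}\Delta_{e_k}(j_k)$ with the layer indices $j_k$ possibly distinct, whereas a quotient in $\Delta(s)^{\oplus}$ only uses standard modules from the single layer $s$. The inclusion $\ltt{F}'_{f}(\Delta)\subseteq\ltt{F}_{f}(\Delta)$ is then immediate: since $\Delta(s)\subseteq\Delta$ we have $\Delta(s)^{\oplus}\subseteq\Delta^{\oplus}$ for every $s<\alpha$, so any filtration witnessing $M\in\ltt{F}'_{f}(\Delta)$ is in particular a finite $\Delta$-filtration.

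For the converse, let $M\in\ltt{F}_{f}(\Delta)$ and fix a finite filtration $0=M_0\subseteq M_1\subseteq\cdots\subseteq M_n=M$ with $M_i/M_{i-1}\in\Delta^{\oplus}$ for each $i\in[1,n]$. I would refine each step $M_{i-1}\subseteq M_i$ separately. Write $M_i/M_{i-1}\cong\bigoplus_{k=1}^{m_i}\Delta_{e_k}(j_k)$; after deleting redundant steps we may assume $M_i/M_{i-1}\neq 0$. Group the summands according to their layer index: if $l_1,\ldots,l_{p_i}$ are the distinct values occurring among $j_1,\ldots,j_{m_i}$, set $A_t:=\bigoplus_{k\,:\,j_k=l_t}\Delta_{e_k}(l_t)$, so that $A_t\in\Delta(l_t)^{\oplus}$ and $M_i/M_{i-1}\cong\bigoplus_{t=1}^{p_i}A_t$. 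Then
\[ 0\subseteq A_1\subseteq A_1\oplus A_2\subseteq\cdots\subseteq\bigoplus_{t=1}^{p_i}A_t\cong M_i/M_{i-1} \]
is a finite filtration of $M_i/M_{i-1}$ whose $t$-th consecutive quotient is $A_t\in\Delta(l_t)^{\oplus}$, hence lies in $\Delta(s)^{\oplus}$ for the single index $s=l_t$.

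Finally I would lift this to a refinement of $M_{i-1}\subseteq M_i$: since $\Mod_\rho(\R)$ is abelian, the usual correspondence between subobjects of $M_i$ containing $M_{i-1}$ and subobjects of $M_i/M_{i-1}$, transported along a fixed isomorphism $M_i/M_{i-1}\xrightarrow{\sim}\bigoplus_{t}A_t$, produces a chain $M_{i-1}=N^{(i)}_0\subseteq N^{(i)}_1\subseteq\cdots\subseteq N^{(i)}_{p_i}=M_i$ with $N^{(i)}_t/N^{(i)}_{t-1}\cong A_t\in\Delta(l_t)^{\oplus}$ (alternatively one takes $N^{(i)}_t$ to be the preimage in $M_i$ of the image of $A_1\oplus\cdots\oplus A_t$). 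Concatenating these refined chains for $i=1,\ldots,n$ (which glue because $N^{(i)}_{p_i}=M_i=N^{(i+1)}_0$) yields one finite filtration of $M$ all of whose consecutive quotients lie in $\Delta(s)^{\oplus}$ for some $s<\alpha$; thus $M\in\ltt{F}'_{f}(\Delta)$, and the two classes coincide. I do not expect a genuine obstacle here: this is a routine refinement-of-filtrations argument, and the only points needing a little care are the grouping of the direct summands of each $\Delta^{\oplus}$-quotient by layer and the standard lift of a filtration of a quotient to the ambient module.
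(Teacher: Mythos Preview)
Your proof is correct and follows essentially the same approach as the paper: both rest on the observation that any object of $\Delta^{\oplus}$ admits a filtration by its layer-grouped partial sums, which is exactly your refinement of each step $M_{i-1}\subseteq M_i$ and the paper's base case $\ell_\Delta(M)=1$. The only organizational difference is that the paper packages the concatenation as induction on $\ell_\Delta(M)$ together with the (stated, not proved) closure of $\mathcal{F}'_f(\Delta)$ under extensions, while you refine each step directly and glue; the content is the same.
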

\begin{proof} It can be shown that $\ltt{F}'_{f}(\Delta)$ is closed under extensions. Moreover, it is also clear
that $\ltt{F}'_{f}(\Delta)\subseteq \ltt{F}_{f}(\Delta).$
\

In order to prove that $\ltt{F}_{f}(\Delta)\subseteq \ltt{F}'_{f}(\Delta),$ we proceed by induction on the $\Delta$-length of objects in $\ltt{F}_{f}(\Delta).$ Let $M\in \ltt{F}_{f}(\Delta).$
\

 If $\ell_{\Delta}(M)=1$, then we have that
$$M = \bigoplus_{k=1}^{n}\bigoplus_{e\in J_{i_{k}}}
\Delta_{e}(i_{k})^{\mu_{e,k}}$$
where $J_{i_{k}}\subseteq \sigma_{i_{k}}(\mathcal{B})$  is finite.
Let $M(i_{k}):=\bigoplus_{e\in J_{i_{k}}}\Delta_{e}(i_{k})^{\mu_{e,k}}\in \Delta(i_{k})^{\oplus}$.
Then, the filtration
$$0\subseteq M(i_{1})\subseteq M(i_{1})\oplus M(i_{2})\subseteq \dots \subseteq \bigoplus_{s=1}^{n-1}M(i_{s})\subseteq
\bigoplus_{s=1}^{n}M(i_{s})=M$$
is the one required.
\

$\ell_{\Delta}(M)=m\geq 2$. Thus, we have a $\Delta$-filtration
$$\xi:\quad 0=M_{0}\subseteq M_{1}\subseteq M_{2}\subseteq \dots \subseteq M_{m-1}\subseteq M_{m}=M$$
and the exact sequence
$0\to M_{m-1}\to   M_{m}\to Z_{m}\to 0,$
where $\ell_{\Delta}(M_{m-1})=m-1$ and
$\ell_{\Delta}(Z_{m})=1.$ By inductive hypothesis, it follows that  $M_{m-1}\in \ltt{F}'_{f}(\Delta).$
Using the fact that $\ltt{F}'_{f}(\Delta)$ is closed under extensions, we conclude that
$M\in \ltt{F}'_{f}(\Delta).$
\end{proof}

\begin{lem}\label{claveD} Let $\R$ be a  locally finite $\K$-ringoid, and let $\B:=\{\B_{i}\}_{i< \alpha}$ be  an admissible family of subcategories of
$\R.$ Then, the family ${}_\B\Delta,$ of $\B$-standard modules, satisfies the following conditions.
\begin{itemize}
\item[(a)]  If ${}_\B\Delta\subseteq\finp_\rho(\R)$  then $\Delta_e(i)$ is local, for any $i<\alpha$ and $e\in\sigma_i(\B).$
\item[(b)] $\Hom(\Delta_e(i),\Delta_{e'}(i))\simeq\Delta_{e'}(i)(e),$ for any $e,e'\in \sigma_i(\B).$
\item[(c)] $\Hom(\Delta_e(i),\Delta_{e'}(i'))=0$ if $i<i'$ and $e\in \sigma_i(\B),$ $e'\in \sigma_{i'}(\B).$
\item[(d)] $\mathrm{Ext}^{1}(\Delta_{e}(i),\Delta_{e'}(i'))=0$ if $i\leq i'$ and $e\in \sigma_i(\B),$ $e'\in \sigma_{i'}(\B).$
\end{itemize}
\end{lem}
\begin{dem} Let $e\in\sigma_i(\B).$ By Proposition \ref{DSuppFin},  we have that $\Delta_e(i)=Y(e)/U_e(i),$ where
$U_e(i):=I_{\cup_{j< i}\B_{j}}(-,e)=\mathrm{Tr}_{\bigoplus_{a\in\cup_{j<i}\B_j}Y(a)}(Y(e)),$ and   $\Delta_e(i)\neq 0.$

(a) Let ${}_\B\Delta\subseteq\finp_\rho(\R).$  Since $e$ is local in $\R$ and $\finp_\rho(\R)$ is a Krull-Schmidt category (see Proposition \ref{LARp3}),  the
epimorphism  $Y(e)\rightarrow \Delta_e(i)$  is a projective cover. Let $\Delta_e(i)=\oplus_{k=1}^n M_k$ be a decomposition of $\Delta_e(i),$ where each
$M_k$ is local.  Consider the projective cover $P_k\to M_k,$ for $k\in[1,n].$ Using the fact that a finite coproduct of projective
covers is a projective cover, it follows that $Y(e)=\oplus_{k=1}^n P_k.$ Therefore $n=1$ since $Y(e)$ is local. Then we get that $\Delta_e(i)=M_1,$ proving that  $\Delta_e(i)$ is local.
\

(b), (c) and (d)
Let $i,i'\in[0, \alpha)$ and $e\in\sigma_i(\B), e'\in\sigma_{i'}(\B)$. Thus, we have the exact sequences of right $\R$-modules
\begin{eqnarray*}
0\rightarrow I_{\cup_{t<i}\mathcal{B}_t}(-,e)\rightarrow Y(e)\rightarrow \Delta_e(i)\rightarrow 0,\\
\bigoplus_{a\in\cup_{t<i}\mathcal{B}_t}\, Y(a)\rightarrow I_{\cup_{t<i}\mathcal{B}_t}(-,e)\rightarrow 0.
\end{eqnarray*}

Then, by applying $\Hom(-,\Delta_{e'}(i'))$  to the above exact sequences, we get  the exact sequence of abelian groups
\begin{center}
$0\rightarrow (\Delta_e(i), \Delta_{e'}(i'))\rightarrow(Y(e),\Delta_{e'}(i'))\to (I_{\cup_{t<i}\mathcal{B}_{t}}(-,e),\Delta_{e'}(i')),$
\end{center}
an epimorphism $\Hom( I_{\cup_{t<i}\mathcal{B}_{t}}(-,e),\Delta_{e'}(i'))\to \Ext^1(\Delta_e(i),\Delta_{e'}(i'))$ and a monomorphism 
$\Hom( I_{\cup_{t<i}\mathcal{B}_{t}}(-,e),\Delta_{e'}(i'))\to \Hom(\bigoplus_{a\in\cup_{t<i}\mathcal{B}_t}Y(a),\Delta_{e'}(i')).$

By Yonedas's Lemma, we have that
$$\Hom(\bigoplus_{a\in\cup_{t<i}\mathcal{B}_t}Y(a),\Delta_{e'}(i'))\simeq \prod_{a\in\cup_{t<i}\mathcal{B}_t}\Delta_{e'}(i')(a).$$
On the other hand, we know that
$$\Delta_{e'}(i')(a)= \frac{\R(a,e')}{I_{\cup_{t<i'}\mathcal{B}_t}(a,e')}.$$
\

Let $i\leq i'.$ Then, we get $\cup_{t<i}\mathcal{B}_{t}\subseteq\cup_{t<i'}\mathcal{B}_{t}$ and hence $\Delta_{e'}(i')(a)=0$ for any $a\in\cup_{t<i}\mathcal{B}_t.$ Therefore, we conclude that
$$\mathrm{Ext}^{1}(\Delta_{e}(i),\Delta_{e'}(i'))=0\;\text{and}\;(\Delta_e(i), \Delta_{e'}(i'))\simeq(Y(e),\Delta_{e'}(i'))\simeq\Delta_{e'}(i')(e).$$
If $i<i',$ it follows that $e\in\sigma_i(\B)\subseteq\B_i\subseteq\cup_{t<i'}\mathcal{B}_t$ and so
$\Delta_{e'}(i')(e)=\frac{\R(e,e')}{I_{\cup_{t<i'}\mathcal{B}_t}(e,e')}=0;$ proving that $(\Delta_e(i), \Delta_{e'}(i'))=0.$
\end{dem}

\begin{lem}\label{claveD1} Let $\R$ be a  locally finite $\K$-ringoid, and let $\B:=\{\B_{i}\}_{i< \alpha}$ be  an admissible family of subcategories of
$\R.$ Then, for the family ${}_{\B}\Delta$ of  $\B$-standard  right $\R$-modules and  any $i<\alpha,$ the following statements are equivalent.
\begin{itemize}
\item[(a)]  $\End(\Delta_e(i))$ is a division ring, for any  $e\in\sigma_i(\B).$
\item[(b)] $I_{\cup_{t<i}\mathcal{B}_{t}}(e,e)=\rad_{\R}(e,e),$ for any $e\in \sigma_i(\B).$
\item[(c)] $\End(\Delta_e(i))\simeq \End_\R(e)/\rad\,\End_\R(e),$ for any  $e\in\sigma_i(\B).$
\end{itemize}
\end{lem}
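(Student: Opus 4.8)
The plan is to identify $\End(\Delta_e(i))$, for a fixed $i<\alpha$ and $e\in\sigma_i(\B)$, with the quotient ring $\End_\R(e)/I_{\cup_{t<i}\B_t}(e,e)$, and then deduce the three equivalences from elementary facts about quotients of local rings; since every argument is uniform in $e$, this suffices. Write $R:=\End_\R(e)$ --- a local ring, since $e\in\ind(\R)$ --- and $J:=I_{\cup_{t<i}\B_t}(e,e)$, which is a two-sided ideal of $R$ because $I_{\cup_{t<i}\B_t}$ is an ideal of the category $\R$; recall also $\rad_\R(e,e)=\rad R$, so condition (b) says precisely $J=\rad R$. By Proposition \ref{DSuppFin}(a) (valid since $\R$ is locally finite) we have $\Delta_e(i)=Y(e)/U_e(i)$ with $U_e(i)=I_{\cup_{t<i}\B_t}(-,e)$ and $\Delta_e(i)\neq 0$, whence $\Delta_e(i)(e)=\R(e,e)/J=R/J\neq 0$. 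The first point is that $J\subseteq\rad R$: any $f\in J$ factors through some object $X$ of $\bigcup_{t<i}\B_t$, so if $f$ were a unit of $R$ then $1_e$ would factor through $X$, making $e$ a direct summand of $X$ and hence $e\in\add(\B_t)=\B_t$ for some $t<i$, i.e.\ $e\in\ind(\B_t)$, contradicting $e\in\sigma_i(\B)$; thus $J$ contains no units, and since $R$ is local, $J\subseteq\rad R$.

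Next I would promote the $\K$-module isomorphism of Lemma \ref{claveD}(b), which for $e'=e$ reads $\End(\Delta_e(i))\xrightarrow{\sim}\Delta_e(i)(e)=R/J$, to an isomorphism of rings. Composed with Yoneda's Lemma it is the map $\phi\mapsto\phi_e(\overline{1_e})$, where $\overline{1_e}:=\pi_e(1_e)$ for the projection $\pi\colon Y(e)\to\Delta_e(i)$. For any $y\colon e\to e$ one has $\overline{y}=\Delta_e(i)(y)(\overline{1_e})$, and by naturality $\Delta_e(i)(y)$ commutes with every endomorphism of the functor $\Delta_e(i)$; hence, writing $\phi_e(\overline{1_e})=\overline z$ and $\psi_e(\overline{1_e})=\overline y$, one gets $(\phi\circ\psi)_e(\overline{1_e})=\phi_e(\overline{y})=\phi_e(\Delta_e(i)(y)(\overline{1_e}))=\Delta_e(i)(y)(\overline z)=\overline{z\circ y}$, which is exactly the product $\overline z\cdot\overline y$ in the ring $R/J$. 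Therefore $\End(\Delta_e(i))\cong R/J=\End_\R(e)/I_{\cup_{t<i}\B_t}(e,e)$ as rings. (Surjectivity, already contained in Lemma \ref{claveD}(b), can also be seen directly: the Yoneda transformation $Y(e)\to\Delta_e(i)$ attached to $\overline y$ sends $x\in\R(a,e)$ to $\overline{y\circ x}$, which vanishes on $I_{\cup_{t<i}\B_t}(a,e)$ because $I_{\cup_{t<i}\B_t}$ is a categorical ideal.)

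Finally, since $J\subseteq\rad R$, the ring $R/J$ is a quotient of the local ring $R$, hence local, with $\rad(R/J)=\rad R/J$ and residue ring $(R/J)/\rad(R/J)\cong R/\rad R$, which is a division ring; recall also $R/J\neq 0$. A local ring is a division ring if and only if its Jacobson radical is zero, so (a) holds iff $R/J$ is a division ring iff $\rad(R/J)=0$ iff $\rad R=J$, which is (b). Moreover (b) gives (c) immediately, since then $\End(\Delta_e(i))\cong R/J=R/\rad R=\End_\R(e)/\rad\,\End_\R(e)$; conversely (c) makes $R/J$ abstractly isomorphic to the division ring $R/\rad R$, hence a division ring, so $\rad R=J$ by the first chain, i.e.\ (b). This proves (a)$\Leftrightarrow$(b)$\Leftrightarrow$(c) for each $e$, and hence for the versions of (a), (b), (c) quantified over all $e\in\sigma_i(\B)$. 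I expect the only genuine work to be verifying that the isomorphism $\End(\Delta_e(i))\cong R/J$ is multiplicative, and --- like surjectivity in Lemma \ref{claveD}(b) --- this reduces to the single fact that $I_{\cup_{t<i}\B_t}$ is an ideal of the category $\R$.
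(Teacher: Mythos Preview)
Your proof is correct and follows essentially the same route as the paper: both rest on the identification $\End(\Delta_e(i))\cong \End_\R(e)/I_{\cup_{t<i}\B_t}(e,e)$ from Lemma~\ref{claveD}(b), the inclusion $J\subseteq\rad R$ via the factorization argument, and the locality of $\End_\R(e)$. The only differences are organizational: you verify explicitly that the isomorphism is multiplicative (which the paper uses silently), and you replace the paper's direct contradiction argument for $\rad R\subseteq J$ under (a) by the cleaner observation that $\rad(R/J)=\rad R/J$ whenever $J\subseteq\rad R$.
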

\begin{dem}  Let $e\in\sigma_i(\B).$ Then, by Lemma \ref{claveD} (b), it follows that
$$(*)\quad \End(\Delta_e(i))\simeq \End_{\R}(e)/I_{\cup_{t<i}\mathcal{B}_{t}}(e,e).$$
(a) $\Rightarrow$ (b) Assume that $\End(\Delta_e(i))$ is a division ring.  Let $f\in I_{\cup_{t<i}\mathcal{B}_{t}}(e,e).$ Then, there are morphisms $e\xrightarrow{v} b\xrightarrow{u}e,$ with $b\in \cup_{t<i}\mathcal{B}_{t}$ and such that $f=uv.$ Since $b\in\cup_{t<i}\mathcal{B}_{t},$ we get that $f$ is not an isomorphism and hence
$f\in\rad_{\R}(e,e).$
\

Let $f\in\rad_{\R}(e,e).$ Suppose that $f\not\in I_{\cup_{t<i}\mathcal{B}_{t}}(e,e).$ Then, by $(*),$ the class $\overline{f}=f+ I_{\cup_{t<i}\mathcal{B}_{t}}(e,e)$ is invertible in $\End(\Delta_e(i))$  and  there is $g:e\to e$ such that $fg-1_e\in I_{\cup_{t<i}\mathcal{B}_{t}}(e,e).$ Note that
$fg\in\rad_{\R}(e,e)$ and thus $fg-1_e$ is invertible in $\End_\R(e).$ As a consequence, $1_e\in I_{\cup_{t<i}\mathcal{B}_{t}}(e,e)$ and so
$e\in \cup_{t<i}\mathcal{B}_{t},$ which is a contradiction. Therefore $f\in I_{\cup_{t<i}\mathcal{B}_{t}}(e,e).$
\

The implications (b) $\Rightarrow$ (c) $\Rightarrow$ (a) follow from $(*)$ and the fact that $e$ is local in $\R.$
\end{dem}

\begin{lem}\label{claveD2}  Let $\R$ be a  locally finite $\K$-ringoid, and let $\B:=\{\B_{i}\}_{i< \alpha}$ be  an admissible family of subcategories of
$\R.$ Then, for the family ${}_{\B}\Delta$ of  $\B$-standard  right $\R$-modules and  any $i<\alpha,$ the following statements are equivalent.
\begin{itemize}
\item[(a)]  $\Hom(\Delta_e(i),\Delta_{e'}(i))=0,$  for any  $e\neq e'$ in $\sigma_i(\B).$
\item[(b)] $I_{\cup_{t<i}\mathcal{B}_{t}}(e,e')=\rad_{\R}(e,e'),$ for any $e\neq e'$ in $\sigma_i(\B).$
\end{itemize}
\end{lem}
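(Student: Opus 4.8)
The plan is to mimic the proof of Lemma \ref{claveD1}, using the general Hom-computation of Lemma \ref{claveD}(b) in place of the endomorphism computation $(*)$ appearing there. First I would record the analogue of that identity: by Lemma \ref{claveD}(b) together with Proposition \ref{DSuppFin}(a), for any $e,e'\in\sigma_i(\B)$ one has
$$\Hom(\Delta_e(i),\Delta_{e'}(i))\;\simeq\;\Delta_{e'}(i)(e)\;=\;\frac{\R(e,e')}{I_{\cup_{t<i}\mathcal{B}_{t}}(e,e')}.$$
Hence statement (a) is equivalent to the equality $I_{\cup_{t<i}\mathcal{B}_{t}}(e,e')=\R(e,e')$ for all $e\neq e'$ in $\sigma_i(\B)$.

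Second, I would reduce (b) to the very same equality. The point is that distinct elements $e\neq e'$ of $\sigma_i(\B)\subseteq\ind(\R)$ are non-isomorphic local (hence nonzero indecomposable) objects of the Krull-Schmidt category $\R$, and for such objects $\rad_\R(e,e')=\Hom_\R(e,e')$. Indeed, if some $f\colon e\to e'$ were not in the radical, then there would exist $g\colon e'\to e$ with $gf$ invertible in the local ring $\End_\R(e)$; writing $h:=(gf)^{-1}$, the morphism $hg$ would be a retraction of $f$, so $f$ would be a split monomorphism and hence, as $e'$ is indecomposable and $e\neq 0$, an isomorphism, contradicting $e\not\simeq e'$. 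Consequently $\rad_\R(e,e')=\R(e,e')$, so condition (b) asserts exactly that $I_{\cup_{t<i}\mathcal{B}_{t}}(e,e')=\R(e,e')$ for all $e\neq e'$ in $\sigma_i(\B)$.

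Putting the two steps together, (a) and (b) are both equivalent to the same family of equalities, which proves the lemma. One may equally phrase this as two implications: for (a)$\Rightarrow$(b), note that $I_{\cup_{t<i}\mathcal{B}_{t}}(e,e')\subseteq\rad_\R(e,e')$ trivially and that $\Hom(\Delta_e(i),\Delta_{e'}(i))=0$ forces $\R(e,e')=I_{\cup_{t<i}\mathcal{B}_{t}}(e,e')$; for (b)$\Rightarrow$(a), combine $I_{\cup_{t<i}\mathcal{B}_{t}}(e,e')=\rad_\R(e,e')=\R(e,e')$ with the displayed isomorphism. I do not anticipate any real obstacle: the only substantive ingredient is the elementary fact that $\rad_\R(e,e')$ fills all of $\Hom_\R(e,e')$ between non-isomorphic local objects, and the rest is bookkeeping with identifications already available from Lemma \ref{claveD} and Proposition \ref{DSuppFin}.
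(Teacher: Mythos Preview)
Your proof is correct and follows essentially the same approach as the paper: both arguments combine the identification $\Hom(\Delta_e(i),\Delta_{e'}(i))\simeq \R(e,e')/I_{\cup_{t<i}\B_t}(e,e')$ from Lemma~\ref{claveD}(b) and Proposition~\ref{DSuppFin} with the observation that $\rad_\R(e,e')=\R(e,e')$ for non-isomorphic local objects. The only difference is that you spell out the radical computation in detail, whereas the paper simply asserts it.
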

\begin{dem} Let $e\neq e'$ in $\sigma_i(\B).$ By Lemma \ref{claveD} (b) and Proposition \ref{DSuppFin}, we get the exact sequence
$$ 0\to I_{\cup_{t<i}\mathcal{B}_{t}}(e,e') \to \R(e,e')\to \Hom(\Delta_e(i),\Delta_{e'}(i))\to 0.$$
On the other hand, note that $\rad_{\R}(e,e')=\R(e,e'),$ since $e$ and $e'$ are not isomorphic local objects. Therefore, the result follows.
\end{dem}

\begin{lem}\label{interc} Let $\R$ be a  locally finite $\K$-ringoid,  $\B:=\{\B_{i}\}_{i< \alpha}$ be  an admissible family of subcategories of
$\R,$  and let $\Delta={}_\B\Delta.$ Then, the following statements hold true.
\begin{itemize}
\item[(a)] Let  $ L\subseteq M \subseteq  N$ be a chain of right $\R$-submodules,
with $M/L\in \Delta(i')^{\oplus},$  $N/M\in \Delta(i)^{\oplus}$ and
$i<i'$. Then, there exists a chain of right $\R$-submodules
$L\subseteq M'\subseteq N$ such that $M'/L\simeq N/M\in \Delta(i)^{\oplus}$ and $N/M'\simeq M/L\in \Delta(i')^{\oplus}.$
\item[(b)] Let $\{\eta_{i}:\;0\to M_{i-1}\to M_{i}\to  X_{i}\to 0\}_{i=1}^{n}$ be a family of exact sequences
in $\Mod_\rho(\R),$
where $X_{i}\in \Delta(j)^{\oplus},$ for every $i\in[1,n]$ and some $j<\alpha.$  Then,  for each $k\in [1,n],$
there exists an exact sequence of the form
$\xi_{k}:\; 0\to M_{0}\to M_{k}\to  Z_{k}\to 0,$
where $Z_{k}=\oplus_{i=1}^k\, X_{i} \in \Delta(j)^{\oplus}.$
\end{itemize}
\end{lem}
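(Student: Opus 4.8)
For part (a), the plan is to use a pushout/pullback argument together with the vanishing $\Ext^1(\Delta_e(i),\Delta_{e'}(i'))=0$ from Lemma \ref{claveD} (d), which applies precisely because $i<i'$ so $i\le i'$. Concretely, consider the short exact sequence $0\to M/L\to N/L\to N/M\to 0$ of right $\R$-modules, where $M/L\in\Delta(i')^{\oplus}$ and $N/M\in\Delta(i)^{\oplus}$. Since $\Delta(i)^{\oplus}$ and $\Delta(i')^{\oplus}$ are finite direct sums of individual standards, $\Ext^1(N/M,M/L)$ is a finite direct sum of groups $\Ext^1(\Delta_e(i),\Delta_{e'}(i'))$ with $e\in\sigma_i(\B)$, $e'\in\sigma_{i'}(\B)$, each of which vanishes by Lemma \ref{claveD} (d) (using $i<i'\le i'$). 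Hence the sequence splits, giving a submodule $M'/L\subseteq N/L$ with $M'/L\simeq N/M$ and $(N/L)/(M'/L)\simeq M/L$. Pulling this back through $L\hookrightarrow N$ (i.e.\ taking preimages under the quotient map $N\to N/L$) produces the desired chain $L\subseteq M'\subseteq N$ with $M'/L\simeq N/M\in\Delta(i)^{\oplus}$ and $N/M'\simeq M/L\in\Delta(i')^{\oplus}$.

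For part (b), I would argue by induction on $k$. The case $k=1$ is just $\xi_1=\eta_1$ with $Z_1=X_1$. For the inductive step, suppose we have $\xi_{k-1}:0\to M_0\to M_{k-1}\to Z_{k-1}\to 0$ with $Z_{k-1}=\bigoplus_{i=1}^{k-1}X_i\in\Delta(j)^{\oplus}$. The plan is to splice $\xi_{k-1}$ with $\eta_k:0\to M_{k-1}\to M_k\to X_k\to 0$: form the pushout (or rather, work inside $M_k$ directly) to get the commutative diagram relating $M_0\subseteq M_{k-1}\subseteq M_k$, which yields a short exact sequence $0\to Z_{k-1}\to M_k/M_0\to X_k\to 0$. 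Since both $Z_{k-1}$ and $X_k$ lie in $\Delta(j)^{\oplus}$ for the \emph{same} index $j$, Lemma \ref{claveD} (d) with $i\le i'$ taken as $j\le j$ gives $\Ext^1(X_k,Z_{k-1})=0$ (again a finite direct sum of vanishing $\Ext^1(\Delta_e(j),\Delta_{e'}(j))$ groups). Therefore $M_k/M_0\simeq Z_{k-1}\oplus X_k=\bigoplus_{i=1}^k X_i=:Z_k\in\Delta(j)^{\oplus}$, and the resulting sequence $\xi_k:0\to M_0\to M_k\to Z_k\to 0$ is the one required.

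The routine bookkeeping will be checking that the relevant $\Ext^1$ groups genuinely decompose as finite direct products/coproducts over the local summands of the $\Delta(i)^{\oplus}$'s and that this is compatible with the additivity of $\Ext^1$ in both variables (using that everything in sight is finitely presented, so the modules are small); this is standard but should be stated. The main conceptual point—and the only real obstacle—is ensuring the index inequalities line up so that Lemma \ref{claveD} (d) applies: in (a) the filtration factor of lower index sits \emph{on top} (the quotient $N/M\in\Delta(i)^{\oplus}$ with $i$ smaller), which is exactly the configuration for which the extension splits, and in (b) the equal-index case $i\le i'$ of Lemma \ref{claveD} (d) is what makes the successive extensions by $\Delta(j)^{\oplus}$ collapse into a direct sum. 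No transfinite induction is needed here since $n$ is finite.
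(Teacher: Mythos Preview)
Your proposal is correct and follows essentially the same approach as the paper. In both parts the key input is Lemma \ref{claveD} (d); the paper presents (a) via two explicit commutative diagrams (first building $A=N/L$ and then pulling back the split sequence $0\to N/M\to A\to M/L\to 0$), which is exactly your ``split and take preimages'' argument written out, and for (b) the paper also inducts on $k$ and uses the same $3\times 3$ diagram to obtain the split extension $0\to Z_{k-1}\to M_k/M_0\to X_k\to 0$. Your remark about finite presentedness is unnecessary: additivity of $\Ext^1$ over \emph{finite} direct sums (which is all that $\Delta(i)^{\oplus}$ involves) holds in any abelian category without smallness hypotheses.
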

\begin{proof} (a) From the chain of submodules $ L\subseteq M \subseteq  N,$ we construct the following exact and commutative diagram
$$\xymatrix{ & 0\ar[d] & 0\ar[d]\\
& L\ar@{=}[r]\ar[d]^{i} & L\ar[d]^{i'i}\\
0\ar[r] & M\ar[r]^{i'}\ar[d]^{d} & N\ar[r]^{d'}\ar[d]^{\beta} & \frac{N}{M}\ar@{=}[d]\ar[r] & 0 \\
0\ar[r] & \frac{M}{L}\ar[r]^{\alpha_1}\ar[d] & A\ar[r]^{\alpha_2}\ar[d] & \frac{N}{M}\ar[r] & 0\\
& 0 & 0.}$$
By Lemma \ref{claveD} (d), the bottom exact sequence, in the above diagram, splits. Thus, we have the
exact sequence  $\xi:0\sta{}{\fl} \frac{N}{M}\sta{\beta_2}{\fl}
A\sta{\beta_1}{\fl} \frac{M}{L}\sta{}{\fl} 0.$ Then, we get the exact and commutative diagram
$$\xymatrix{& & 0\ar[d] & 0\ar[d]\\
0\ar[r] & L\ar[r]\ar@{=}[d] & M'\ar[r]\ar[d] &
\frac{N}{M}\ar[d]^{\beta_{2}}\ar[r] & 0\\
0\ar[r] & L\ar[r]^{i'i} & N\ar[r]^{\beta}\ar[d]^{\pi} & A\ar[d]^{\beta_{1}}\ar[r] & 0\\
& & \frac{M}{L}\ar@{=}[r]\ar[d] & \frac{M}{L}\ar[d]\\
& & 0 & 0.}$$
Finally, we conclude that
$L\subseteq M'\subseteq N$ and so
$\frac{M'}{L}\simeq \frac{N}{M}\in \Delta(i)^{\oplus}$ and
$\frac{N}{M'}\simeq \frac{M}{L}\in \Delta(i')^{\oplus}$.
\

(b) We proceed by induction on $k$. If $k=1,$ we set $\xi_{1}:=\eta_{1}$.\\
Let $k\geq 2.$ Then, by induction, we have defined $\xi_{k-1}$ satisfying  (b). We construct the following exact and commutative diagram
$$\xymatrix{& 0\ar[d] & 0\ar[d]\\
& M_{0}\ar@{=}[r]\ar[d] & M_{0}\ar[d]\\
0\ar[r] & M_{k-1}\ar[r]\ar[d] & M_{k}\ar[r]\ar[d] & X_{k}\ar[r]\ar@{=}[d] & 0\\
0\ar[r] & \oplus_{s=1}^{k-1}X_{s}\ar[r]\ar[d]  & L_{k}\ar[r]\ar[d] & X_{k}\ar[r] & 0\\
& 0 & 0}$$
By Lemma \ref{claveD} (d), we have that the buttom exact sequence splits. Then, we
have that $L_{k}\simeq \oplus_{s=1}^{k}X_{k}$. Therefore, the second column in the above diagramm is the required exact sequence.
\end{proof}

In the following definition, we use that $\ltt{F}_{f}(\Delta)=\ltt{F}'_{f}(\Delta),$ see Proposition \ref{f=f'}.
\begin{defi} Let $\R$ be a  locally finite $\K$-ringoid,  $\B:=\{\B_{i}\}_{i<\alpha}$ be  an admissible family of subcategories of
$\R,$  and let $\Delta={}_\B\Delta.$
For $M\in \ltt{F}_{f}(\Delta),$ we consider a filtration
$$\xi:\quad 0=M_{0}\subseteq M_{1}\subseteq M_{2}\subseteq \dots \subseteq M_{m-1}\subseteq M_{m}=M,$$
where $X_{k}:=M_{k}/M_{k-1}\in \Delta(i_{k})^{\oplus}.$ In this case, we have the set
$$\Phi_\xi(i):=\{k\in[1,m]\;|\; 0\neq X_{k}\in \Delta(i)^{\oplus}\}.$$
\begin{enumerate}
\item [(a)] The {\bf $\xi$-ladder filtration multiplicity} $[M:\Delta(i)]_\xi,$ of $\Delta(i)$ in $M,$  is
the cardinality of $\Phi_\xi(i).$  In general, the ladder filtration multiplicity $[M:\Delta(i)]_\xi$ could be depending on $\xi.$
\item [(b)] We define, the $\xi$-{\bf ladder} $\Delta$-length of $M$
$$\ell'_{\Delta,\xi}(M)=\sum_{\Delta(i)\in \Delta}[M:\Delta(i)]_\xi.$$
Observe that this sum is finite, since
only a finite number of $\Delta(i)$ appears in $\xi.$
\item [(c)] For $i<\alpha$ and  $k\in \Phi_\xi(i),$ we consider a decomposition
$$D_{k,i}(\xi):\qquad X_{k}=\bigoplus_{e\in J_{k}}\Delta_{e}(i)^{\mu_{e,k}}$$
of each $X_k,$
where $J_{k}\subseteq\sigma_{i}(\ltt{B})$ is finite. Let $D_{i}(\xi):=\{D_{k,i}(\xi)\}_{k\in\Phi_\xi(i)}$ be called the family of decompositions
 associated with the set $\Phi_\xi(i).$
We define  the
{\bf $\xi$-filtration multiplicity}  of $\Delta_{e}(i)$ in $M$ as follows:
\begin{displaymath}
[M:\Delta_{e}(i)]_{\xi,D_{i}(\xi)}:=\left\{\begin{array}{ll}
0 & \textrm{if $[M:\Delta(i)]_{\xi}=0,$}\\
{}\\
\sum_{k\in\Phi_\xi(i)}\,\mu_{e,k} & \textrm{if $[M:\Delta(i)]_{\xi}\neq 0.$}
\end{array} \right.
\end{displaymath}
\end{enumerate}
\end{defi}

\begin{rk}\label{inddesc} Note that $[M:\Delta_{e}(i)]_{\xi,D_{i}(\xi)}$ depends not only on $\xi$ but also on the chosen family $D_{i}(\xi)$ of decompositions
 associated with the set $\Phi_\xi(i).$ However, if $\Delta^{\oplus}\subseteq\finp_\rho(\Delta),$ then $[M:\Delta_{e}(i)]_{\xi,D_{i}(\xi)}$ does not depend on $D_{i}(\xi),$ since  by Lemma \ref{claveD} (a)  all the $\Delta_{e}(i)$ are local objects.
\end{rk}

\begin{pro}\label{filtracionordenada}  Let $\R$ be a  locally finite $\K$-ringoid,  $\B:=\{\B_{i}\}_{i< \alpha}$ be  an admissible family of subcategories of
$\R,$  $\Delta={}_\B\Delta$ and $M\in \ltt{F}_{f}(\Delta).$ Consider  a finite $\Delta$-filtration $\xi$ of $M$
$$\xi:\quad 0=M_{0}\subseteq M_{1}\subseteq M_{2}\subseteq \dots \subseteq M_{m-1}\subseteq M_{m}=M,$$
such that $M_k/M_{k-1}\in\Delta(j_k)^\oplus,$ and fix a family $D_{i}(\xi)$ of decompositions
associated with the set $\Phi_\xi(i)$ for each $i.$
\

Then,  there exist $\Delta$-filtrations
$\eta$ and $\varepsilon$ of $M,$   and decompositions $D_i(\eta),$ $D_i(\varepsilon),$ for each $i,$   satisfying the following conditions:
\begin{enumerate}
\item [(a)] $[M:\Delta_e(i)]_{\xi,D_i(\xi)}=[M:\Delta_e(i)]_{\eta,D_i(\eta)},$ for any $e\in\sigma_i(\B).$

\item [(b)] The filtration $\eta$ is well ordered. That is, there is a family of exact sequences
$$\eta=\{\eta_{b}:\xymatrix{0\ar[r] &  \overline{M}_{b-1}\ar[r] &  \overline{M}_{b}\ar[r] & \overline{X}_{b}\ar[r] & 0}\}_{b=1}^{m}$$ with
$\overline{M}_{0}:=0,$  $i_{1}\leq i_{2}\leq \cdots
\leq i_{m}$  and $\overline{X}_{b}\in \Delta(i_{b})^{\oplus}$.

\item [(c)] If $M\neq 0,$ the filtration $\varepsilon$ is strictly well ordered. That is, $\varepsilon$ has the form
$\varepsilon:\; 0=M_{0}'\subsetneq M_{1}' \subsetneq M_{2}' \subsetneq \dots  \subsetneq M_{a-1}'\subsetneq M_{a}'=M$
where $M_{k}'/M_{k-1}'\in \Delta(i'_{k})^{\oplus},$ for $k\in[1,a],$ $a\leq m$ and
$ i'_{1}<i'_{2}< i'_{3} < \dots <i'_{a-1}< i'_{a}.$ Moreover 
\begin{center}
$[M:\Delta_e(i)]_{\varepsilon,D_i(\varepsilon)}=[M:\Delta_e(i)]_{\eta,D_i(\eta)},$ for any $e\in\sigma_i(\B).$
\end{center}
\end{enumerate}
\end{pro}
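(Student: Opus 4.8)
The plan is to first produce the well-ordered filtration $\eta$ from $\xi$ by a bubble-sort type argument, using Lemma \ref{interc} (a) as the elementary swap move, and then refine $\eta$ to the strictly well-ordered filtration $\varepsilon$ by amalgamating consecutive steps of the same index via Lemma \ref{interc} (b). Throughout, I would keep track of the decompositions $D_i(\cdot)$ so as to preserve the multiplicities $[M:\Delta_e(i)]_{\xi,D_i(\xi)}$ — this is automatic because every move below only permutes, regroups, or takes direct sums of the existing subquotients $X_k$, never altering the isomorphism types of the local summands $\Delta_e(i)$ that occur.

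\textbf{Step 1: constructing $\eta$.} Given $\xi:\ 0=M_0\subseteq\cdots\subseteq M_m=M$ with $X_k=M_k/M_{k-1}\in\Delta(j_k)^\oplus$, I would argue that if the sequence $(j_1,\dots,j_m)$ is not (weakly) increasing, there is an index $k$ with $j_{k}>j_{k+1}$, and then apply Lemma \ref{interc} (a) to the chain $M_{k-1}\subseteq M_k\subseteq M_{k+1}$ (with $i:=j_{k+1}$, $i':=j_k$, so $i<i'$): this replaces $M_k$ by a submodule $M_k'$ with $M_k'/M_{k-1}\in\Delta(j_{k+1})^\oplus$ and $M_{k+1}/M_k'\in\Delta(j_k)^\oplus$, leaving all other terms of the filtration untouched. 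The new index sequence is obtained from the old by transposing the adjacent pair $(j_k,j_{k+1})$, so the number of inversions strictly decreases; since the length $m$ is finite, finitely many such swaps yield a filtration $\eta$ whose index sequence $i_1\le i_2\le\cdots\le i_m$ is weakly increasing. Because each swap only relabels two consecutive subquotients by isomorphic objects and does not merge or split any summands, the collection of all subquotients (with their chosen decompositions $D_{k,i}$, carried along the isomorphisms) is unchanged as a multiset, giving (a), and writing $\eta$ as the corresponding family of short exact sequences gives (b). I would phrase the termination cleanly by induction on the number of inversions of $(j_1,\dots,j_m)$, the base case being an already weakly-increasing sequence where $\eta:=\xi$.

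\textbf{Step 2: constructing $\varepsilon$.} Starting from $\eta$ with $i_1\le\cdots\le i_m$, I would group the indices into maximal blocks of equal value: say the distinct values occurring are $i'_1<i'_2<\cdots<i'_a$ (with $a\le m$), and the $b$-th block consists of consecutive steps $\eta_{p+1},\dots,\eta_{q}$ all with subquotient in $\Delta(i'_b)^\oplus$. For each such block I apply Lemma \ref{interc} (b) (with the relevant $M_0$ being $\overline M_p$ and the family of exact sequences being $\eta_{p+1},\dots,\eta_q$, all of whose cokernels lie in $\Delta(i'_b)^\oplus$) to obtain a single short exact sequence $0\to\overline M_p\to\overline M_q\to Z_b\to 0$ with $Z_b=\bigoplus_{b\text{-th block}}\overline X_s\in\Delta(i'_b)^\oplus$. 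Setting $M'_b:=\overline M_{q}$ for the right endpoint $q$ of the $b$-th block gives $0=M'_0\subsetneq M'_1\subsetneq\cdots\subsetneq M'_a=M$ with $M'_b/M'_{b-1}\simeq Z_b\in\Delta(i'_b)^\oplus$ and $i'_1<\cdots<i'_a$; strictness of the inclusions holds when $M\neq0$ since each $Z_b\neq0$ (the blocks were chosen to consist of nonzero subquotients — and one discards any zero $\overline X_s$ first). Defining $D_{i'_b}(\varepsilon)$ from the direct sum of the decompositions $D(\eta)$ of the subquotients in the $b$-th block, each $\Delta_e(i'_b)$ occurs in $Z_b$ with total multiplicity equal to its multiplicity over that block in $\eta$, hence $[M:\Delta_e(i)]_{\varepsilon,D_i(\varepsilon)}=[M:\Delta_e(i)]_{\eta,D_i(\eta)}$ for all $e\in\sigma_i(\B)$, proving (c).

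\textbf{Main obstacle.} The only genuinely non-formal inputs are Lemma \ref{interc} (a) and (b), which in turn rest on the $\mathrm{Ext}^1$-vanishing in Lemma \ref{claveD} (d); given those, the proof is essentially the bookkeeping of a sorting algorithm. The point requiring the most care is the verification in Step 1 that the swap in Lemma \ref{interc} (a) genuinely realizes an \emph{adjacent transposition} of the index sequence while fixing every $M_\ell$ with $\ell\neq k$ — one must check that $M_k'$ sits between $M_{k-1}$ and $M_{k+1}$ and that the subquotients above $M_{k+1}$ and below $M_{k-1}$ are literally the old ones — so that the inversion count is a legitimate termination measure and the multiset of subquotients (with decompositions) is provably preserved. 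I would also be slightly careful about degenerate cases: if some $X_k=0$ in $\xi$, delete it at the outset so that all filtrations considered have nonzero subquotients, which is what makes the strict inclusions in $\varepsilon$ work.
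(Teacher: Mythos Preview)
Your proposal is correct and follows essentially the same approach as the paper: both arguments sort the index sequence via repeated adjacent swaps furnished by Lemma \ref{interc} (a), and then collapse equal-index blocks via Lemma \ref{interc} (b). The only cosmetic difference is the induction scheme in Step 1---the paper phrases it as insertion sort (induct on the filtration length $m$, sort the first $m-1$ steps, then bubble the last step down to its place), whereas you phrase it as bubble sort (induct on the number of inversions)---but the underlying mechanism and the key inputs are identical.
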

\begin{proof} If $M=0,$ we have that (a) and (b) are trivial. Let  $M\neq 0.$
\

Let $\xi$ be the given filtration of $M.$ We may assume that
$$\xi:\quad 0=M_{0}\subsetneq M_{1}\subsetneq M_{2}\subsetneq \dots \subsetneq M_{m-1}\subsetneq M_{m}=M,$$
where $X_{k}:=M_{k}/M_{k-1}\in \Delta(i_{k})^{\oplus}.$
\

We prove (a) and (b), by induction on the $\xi$-ladder length
$n:=\ell'_{\Delta,\xi}(M).$ If $n=1$, the filtration $\xi$ is already well ordered and hence
$\eta:=\xi$ and $\varepsilon:=\xi$ satisfy the required properties.
\

Let $n\geq 2.$ Consider the family of exact sequences induced
by the filtration $\xi$ of $M$
$$\{\xi_{b}:\xymatrix{0\ar[r] & M_{b-1}\ar[r] & M_{b}\ar[r] &  X_{b}\ar[r] & 0}\}_{b=1}^{m}.$$
Since $\xi':=\xi-\{\xi_{m}\}$ is a filtration of $M_{m-1}$  and
$\ell'_{\Delta,\xi'}(M_{m-1})=m-1$, by induction, there is a well ordered
filtration
$$\eta'=\{\eta'_{b}:\xymatrix{0\ar[r] & M'_{b-1}\ar[r] &  M'_{b}\ar[r] &
Y_{b}\ar[r] & 0 }\}_{b=1}^{m-1}$$ of $M_{m-1}$ with
$i'_{1}\leq i'_{2}\leq \cdots \leq i'_{m-1}$ and
$[M_{m-1}:\Delta_e(i)]_{\xi',D_i(\xi')}=[M_{m-1}:\Delta_e(i)]_{\eta',D_i(\eta')},$ for any $e\in\sigma_i(\B).$
If $i_{m-1}'\leq j_{m}$ then $\eta:=\eta'\cup\{\xi_{m}\}$ satisfies the required conditions.\\
Suppose now that $j_{m}<i_{m-1}'.$ Let $l:=\mathrm{max}\{n\in [1,m-1]\;|\;
j_{m}<i_{m-n}'\}$. Observe that the filtration
$\eta'\cup\{\xi_{m}\}$ is almost the one we want, the only exact sequence
that is not ordered is precisely the
$\xi_{m}$. This can be rearranged by applying $l$-times
Lemma \ref{interc} (a) to $\eta'\cup \{\xi_{m}\}$.
\

In order to construct $\varepsilon,$ we use the well ordered filtration $\eta$ from (b).
 We proceed as follows. For each $b,$ we group the $i_{b}$
that are the same and rename them by $\lambda_{a}$. So we get $ \lambda_{1}< \lambda_{2}< \dots < \lambda_{a}$
and hence $\Delta(\lambda_{1}), \cdots, \Delta(\lambda_{a})$ are the different $\Delta(j)$ appearing
in the filtration $\eta$  of $M.$ Define
$s(i):=[M:\Delta(\lambda_{i})]_{\eta},$ $\alpha(i):=\sum_{j=1}^{i}s(j)$ and $\alpha(0):=0$.\\
We divide the filtration $\eta$ into the following pieces
$$\{\eta_{b}:\xymatrix{0\ar[r]  & M_{b-1}\ar[r] &
M_{b}\ar[r] & Y_{b}\ar[r] & 0}\}_{b=\alpha(l-1)+1}^{\alpha(l)},$$ with $l\in[1,a]$. For each $l\in[1,a],$ by Lemma \ref{interc} (b),  we obtain the following exact sequence
$$\varepsilon_{l}:\quad \xymatrix{0\ar[r] & M_{\alpha(l-1)}\ar[r] & M_{\alpha(l)}\ar[r] &
Z_{\alpha(l)}\ar[r] & 0}$$ Hence, by setting
$M_{0}'=0$ and  $M_{i}':=M_{\alpha(i)}$ for $i\in[1, a],$ we conclude that the filtration
$\varepsilon=\{\varepsilon_{i}\}_{i=1}^{a}$ satisfies the required properties. Finally, we bring out that, in the construction of $\eta$  and $\varepsilon,$ we have not added different factors as appearing in $\xi.$ These factors have  just been  reordered and regrouped to obtain  $\eta$  and $\varepsilon.$
\end{proof}

\section{Filtration multiplicities in ringoids}

Let $\mathcal{A}$ be an abelian category.  It is well known that a {\bf pre-radical} $\mathfrak{r}$ of $\mathcal{A}$ is a
 subfunctor of the identity functor $1_\A:\mathcal{A}\longrightarrow \mathcal{A}$. A pre-radical $\mathfrak{r}$ of $\mathcal{A}$ is
 additive if it is an additive functor.
 \

Let $\mathcal{A}$ be an abelian category with arbitrary coproducts. Given a set $\ltt{X}$ of objects in $\mathcal{A}$ and $M\in \mathcal{A},$ we recall that the trace
of $M,$ with respect to $\ltt{X},$ is
$\mathrm{Tr}_{\ltt{X}}(M):=\!\!\!\!\!\!
\!\!\!\!\!\!\displaystyle\sum _{\{f\in\mathrm{Hom}(X, M)\;\mid\; X\in \ltt{X}\}}
\!\!\!\!\!\!\!\!\!\!\!\!\!\!\!\!\!\!\mathrm{Im}(f).$ Note that, for any morphism $f:A\to B$  in $\mathcal{A},$ we have that $f(\mathrm{Tr}_{\ltt{X}}(A))\subseteq \mathrm{Tr}_{\ltt{X}}(B).$ Thus,  a pre-radical $\tau_{\ltt{X}}$ of
$\mathcal{A}$ can be defined as follows: $\tau_{\ltt{X}}(Z):=\mathrm{Tr}_{\ltt{X}}(Z)$ for any $Z\in\A,$  and $\tau_{\ltt{X}}(f)
:=f|_{\tau_{\ltt{X}}(A)}:\tau_{\ltt{X}}(A)\to \tau_{\ltt{X}}(B) ,$ for any morphism $f:A\longrightarrow B$  in $\mathcal{A}$. Note that,
the pre-radical $\tau_{\ltt{X}}$ is additive. In case $\X$ has just one element, say $X,$ we write $\tau_X$ instead of $\tau_\X.$
\

\begin{lem}\label{SubfTr}  Let $\mathcal{A}$ be an abelian category with arbitrary coproducts, and let $M=N\oplus N'$ be a decomposition of $M \in \A.$ Then,
$\tau_N\circ\tau_M=\tau_N$ and thus
$\tau_N$ is a subfunctor of $\tau_M.$
\end{lem}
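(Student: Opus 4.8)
The plan is to work directly with the definition of the trace pre-radical and exploit the fact that $N$ is a direct summand of $M$. First I would unwind what the two composite functors do on an object $A\in\A$: by definition $\tau_M(A)=\mathrm{Tr}_M(A)$ is the sum of the images of all morphisms $M\to A$, and $(\tau_N\circ\tau_M)(A)=\mathrm{Tr}_N(\tau_M(A))$ is the sum of the images of all morphisms $N\to\tau_M(A)$, viewed as submodules of $A$ via the inclusion $\tau_M(A)\hookrightarrow A$. So the goal reduces to the equality of submodules $\mathrm{Tr}_N(\mathrm{Tr}_M(A))=\mathrm{Tr}_N(A)$ inside $A$, naturally in $A$; naturality is automatic once the object-level statement holds, since both sides are subfunctors of $1_\A$ and the verification is pointwise.

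For the inclusion $\mathrm{Tr}_N(\mathrm{Tr}_M(A))\subseteq\mathrm{Tr}_N(A)$: any morphism $g\colon N\to\tau_M(A)$ composed with the inclusion $\tau_M(A)\hookrightarrow A$ is a morphism $N\to A$, and its image is contained in $\mathrm{Tr}_N(A)$; hence the sum of all such images is contained in $\mathrm{Tr}_N(A)$. For the reverse inclusion $\mathrm{Tr}_N(A)\subseteq\mathrm{Tr}_N(\mathrm{Tr}_M(A))$: here I use that $M=N\oplus N'$, so there are structural morphisms $\iota\colon N\to M$ and $\pi\colon M\to N$ with $\pi\iota=1_N$. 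Given any $h\colon N\to A$, factor $h=h\pi\iota$; then $h\pi\colon M\to A$ shows $\mathrm{Im}(h\pi)\subseteq\mathrm{Tr}_M(A)$, so $h=(h\pi)\iota$ actually factors through the subobject $\tau_M(A)\hookrightarrow A$, i.e. there is $\tilde h\colon N\to\tau_M(A)$ with $h$ equal to $\tilde h$ followed by the inclusion. Therefore $\mathrm{Im}(h)\subseteq\mathrm{Tr}_N(\tau_M(A))$ as a submodule of $A$, and summing over all $h$ gives the claim. Combining the two inclusions yields $\tau_N\circ\tau_M=\tau_N$, and the second assertion "$\tau_N$ is a subfunctor of $\tau_M$" follows immediately: since $\tau_N(A)=\tau_N(\tau_M(A))\subseteq\tau_M(A)$ for every $A$, the inclusions $\tau_N(A)\hookrightarrow\tau_M(A)$ are the components of a natural transformation $\tau_N\Rightarrow\tau_M$ compatible with the embeddings into $1_\A$.

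The only mild subtlety — which I expect to be the main point requiring care rather than a genuine obstacle — is the factorization step: one must argue that a morphism $N\to A$ whose image lands inside the subobject $\tau_M(A)$ genuinely lifts to a morphism $N\to\tau_M(A)$ in $\A$. This is exactly the universal property of the image as a subobject (the monomorphism $\tau_M(A)\hookrightarrow A$ being, in the abelian category $\A$, a kernel, so that any map into $A$ factoring through it set-theoretically on images factors through it as a morphism). Everything else is a routine manipulation with traces and the splitting idempotent. One should also note that arbitrary coproducts in $\A$ are used only to make $\mathrm{Tr}$ well-defined as in the preceding paragraph of the excerpt; they play no further role here.
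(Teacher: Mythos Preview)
Your proposal is correct and follows essentially the same approach as the paper's proof: both show the two inclusions, and for the nontrivial one both extend a morphism $h\colon N\to A$ to a morphism $M\to A$ (your $h\pi$ is precisely the paper's matrix morphism $\begin{pmatrix} h & 0\end{pmatrix}$) in order to witness that $\mathrm{Im}(h)$ already lies in $\mathrm{Tr}_M(A)$, then factor $h$ through $\mathrm{Tr}_M(A)$. The paper handles your ``mild subtlety'' by explicitly factoring through $\mathrm{Im}(h)=\mathrm{Im}(h\pi)$ first, but this is only a presentational difference.
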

\begin{dem} Let $X\in\A.$ Then $\Tr_M(X)\subseteq X$ and hence $\Tr_N(\Tr_M(X))\subseteq \Tr_N(X).$
\

Let $g\in\Hom_\A(N,X).$ Consider the factorization $N\xrightarrow{g'}\imagen\,(g)\to X$ of $g$ through its image.  Define the matrix morphism
$ f:=\begin{pmatrix}g & 0 \end{pmatrix}:M\to X.$ Note that $\imagen(f)=\imagen(g)\subseteq\Tr_M(X).$ Let $\imagen\,(f)\xrightarrow{j}\Tr_M(X)$ be the natural
inclusion. Then, for the composition  $N\xrightarrow{g'}\imagen\,(f)\xrightarrow{j}\Tr_M(X),$ we have
$$\imagen\,(g)=\imagen\,(g')=\imagen\,(j\circ g')\subseteq \Tr_N(\Tr_M(X)). $$
Therefore $\Tr_N(\Tr_M(X))=\Tr_N(X),$ proving the result.
\end{dem}

\

In what follows, we consider the abelian category $\mathcal{A}:=\Mod_\rho(\R),$  where
$\R$ is a $\K$-ringoid. Note that $\A$ has arbitrary coproducts and then $\tau_{\ltt{X}}$ is well defined, for any set $\X$ of objects in $\A.$ We recall that $\finp_\rho(\R)$ denotes the category of finitely presented right $\R$-modules.

\begin{defi} Let $\R$ be a  locally finite $\K$-ringoid,  $\B:=\{\B_{i}\}_{i< \alpha}$ be  an admissible family of subcategories of $\R.$ For each  $i<\alpha,$
we consider the additive pre-radicals
$$\tau_{i}(-):=\mathrm{Tr}_{\bigoplus_{j\leq i}\overline{P}(j)}(-)\quad\text{and}\quad
\overline{\tau_{i}}(-):=\mathrm{Tr}_{\bigoplus_{j< i}\overline{P}(j)}(-),$$
 where $P^{op}=\{P^{op}(i)\}_{i<\alpha}$ is the family of projective right $\R$-modules associated with the partition $\sigma(\B),$ and
$\overline{P}(j):=\bigoplus_{e\in\sigma_j(\B)}\,P^{op}_e(j).$
\

 Let $M$ be a right $\R$-module. The $i$-th $\B$-trace of $M$ is  $\tau_i(M)$ and  $\tau_{\B,M}:=\{\tau_i(M)\}_{i<\alpha}$ is the
 $\B$-{\bf trace filtration}of $M,$ which is a chain of submodules of $M.$
\end{defi}

\begin{lem}\label{SubfTr2} Let $\R$ be a  locally finite $\K$-ringoid, and let $\B:=\{\B_{i}\}_{i< \alpha}$ be  an admissible family of subcategories of $\R.$
Then, for any $i<\alpha,$ the
following statements hold true.
\begin{itemize}
\item[(a)] $\overline{\tau_{i}}$ is a subfunctor of $\tau_{i}$ and
$\overline{\tau_{i}}\circ\tau_i=\overline{\tau_{i}}.$
\item[(b)]  $\overline{\tau_{i}}=\sum_{j<i}\,\tau_j.$
\item[(c)]  $\tau_j(\tau_i(M))=\tau_k(M)$ for $k:=\min\{i,j\}.$
\end{itemize}
\end{lem}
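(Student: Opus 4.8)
The strategy is to reduce everything to Lemma \ref{SubfTr}, which handles a single direct-sum decomposition, and to the behaviour of traces under arbitrary coproducts in the Grothendieck category $\A=\Mod_\rho(\R)$. Recall $\tau_i=\Tr_{\bigoplus_{j\le i}\overline P(j)}$ and $\overline{\tau_i}=\Tr_{\bigoplus_{j<i}\overline P(j)}$, and note the elementary fact that for a coproduct $Q=\coprod_k Q_k$ one has $\Tr_Q(M)=\sum_k\Tr_{Q_k}(M)$ (since any morphism $Q\to M$ is determined by its restrictions to the $Q_k$, and conversely). In particular $\tau_i=\sum_{j\le i}\tau_{\overline P(j)}$ and $\overline{\tau_i}=\sum_{j<i}\tau_{\overline P(j)}$.

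For (a): since $\bigoplus_{j<i}\overline P(j)$ is a direct summand of $\bigoplus_{j\le i}\overline P(j)$, Lemma \ref{SubfTr} (applied with $N=\bigoplus_{j<i}\overline P(j)$, $N'=\overline P(i)$, $M=\bigoplus_{j\le i}\overline P(j)$) gives immediately $\tau_N\circ\tau_M=\tau_N$, i.e. $\overline{\tau_i}\circ\tau_i=\overline{\tau_i}$, and that $\overline{\tau_i}$ is a subfunctor of $\tau_i$. For (b): I would first reduce $\overline{\tau_i}=\sum_{j<i}\tau_{\overline P(j)}$ to $\overline{\tau_i}=\sum_{j<i}\tau_j$. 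The inclusion $\tau_{\overline P(j)}\subseteq\tau_j$ is clear since $\overline P(j)$ is a summand of $\bigoplus_{k\le j}\overline P(k)$ (again Lemma \ref{SubfTr}), so $\sum_{j<i}\tau_{\overline P(j)}\subseteq\sum_{j<i}\tau_j$. Conversely, for $j<i$ we have $\bigoplus_{k\le j}\overline P(k)$ a summand of $\bigoplus_{k<i}\overline P(k)$, hence $\tau_j\subseteq\overline{\tau_i}$ by the subfunctor part of Lemma \ref{SubfTr}; summing over $j<i$ gives $\sum_{j<i}\tau_j\subseteq\overline{\tau_i}$. This proves (b).

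For (c): by symmetry in $i,j$ assume $k=\min\{i,j\}=j$, so $j\le i$; I must show $\tau_j(\tau_i(M))=\tau_j(M)$. Here $\bigoplus_{k\le j}\overline P(k)$ is a direct summand of $\bigoplus_{k\le i}\overline P(k)$, so Lemma \ref{SubfTr} applies verbatim with $N=\bigoplus_{k\le j}\overline P(k)$ and $M$ replaced by $\bigoplus_{k\le i}\overline P(k)$, yielding $\tau_N\circ\tau_{\bigoplus_{k\le i}\overline P(k)}=\tau_N$, which is exactly $\tau_j\circ\tau_i=\tau_j$. The case $j\ge i$ is identical after exchanging the roles of $i$ and $j$.

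The only point requiring a little care — the main (minor) obstacle — is justifying the identity $\Tr_{\coprod_k Q_k}(M)=\sum_k\Tr_{Q_k}(M)$, which is what lets me phrase the $\tau_i$'s as finite sums of "single-object" traces $\tau_{\overline P(j)}$ so that Lemma \ref{SubfTr} is directly applicable; this is standard in a Grothendieck category but should be noted. Everything else is a bookkeeping exercise in the containment $\bigoplus_{j<i}\overline P(j)\mid\bigoplus_{j\le i}\overline P(j)$ and repeated invocation of Lemma \ref{SubfTr}. I would also remark that (a) is in fact a special case of (b) together with the general nesting, but it is cleaner to derive (a) first and use it inside (b).
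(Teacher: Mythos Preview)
Your argument is correct in spirit and follows essentially the same route as the paper: parts (a) and (b) are handled exactly as the authors do, by reducing to Lemma~\ref{SubfTr} and to the identity $\Tr_{\coprod_k Q_k}=\sum_k\Tr_{Q_k}$. For (b) your double-inclusion argument is a slight rephrasing of the paper's direct computation $\sum_{j<i}\tau_j=\Tr_{\bigoplus_{j<i}\bigoplus_{k\le j}\overline P(k)}=\overline{\tau_i}$, but the content is identical.

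There is, however, a small slip in your treatment of (c). The statement $\tau_j(\tau_i(M))=\tau_{\min\{i,j\}}(M)$ is \emph{not} symmetric in $i$ and $j$: the composition on the left is in a fixed order. So your opening ``by symmetry assume $k=j$'' and your closing ``the case $j\ge i$ is identical after exchanging the roles'' do not literally work. For $j\le i$, Lemma~\ref{SubfTr} applied to $N=\bigoplus_{k\le j}\overline P(k)$ inside $\bigoplus_{k\le i}\overline P(k)$ gives $\tau_j\circ\tau_i=\tau_j$, as you say. But for $j\ge i$, exchanging the roles yields $\tau_i\circ\tau_j=\tau_i$, which is the wrong composite. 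What you need instead is $\tau_j(\tau_i(M))=\tau_i(M)$: one inclusion is automatic since $\tau_j(X)\subseteq X$ for any $X$; for the other, use that $\tau_i\subseteq\tau_j$ (Lemma~\ref{SubfTr}) and idempotency of $\tau_i$ to get $\tau_i(M)=\tau_i(\tau_i(M))\subseteq\tau_j(\tau_i(M))$. This is exactly the direct inclusion argument the paper gives for (c). With that one-line fix your proof is complete.
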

\begin{dem} (a) follows from Lemma \ref{SubfTr}. To prove (b), let us consider  $X\in \Mod_\rho(\R).$ Then, we have the following sequence of equalities
\begin{align*}
\sum_{j<i}\,\tau_j(X) & =\sum_{j<i}\,\Tr_{\bigoplus_{k\leq j}\,\overline{P}(i)}\,(M)\\
                                 & = \Tr_{\bigoplus_{j<i}\big (\,\bigoplus_{k\leq j}\,\overline{P}(i)\big)}\,(X)\\
                                 & = \overline{\tau_{i}}(X).
\end{align*}
Finally, for the proof of (c). Note that $\tau_i(M)\subseteq M$ and thus $\tau_j(\tau_i(M))\subseteq \tau_j(M).$  Let
$j<i.$ Then $\tau_j(M)\subseteq\tau_i(M)$ and therefore
$\tau^2_j(M)=\tau_j(M)\subseteq\tau_j(\tau_i(M)).$ Hence, we conclude that
$\tau_j(M)=\tau_j(\tau_i(M))$ for every $j<i$. Similarly for $j\geq i,$ we can show that $\tau_j(\tau_i(M))=\tau_i(M).$
\end{dem}

\begin{lem}\label{ImagenN}  Let $\R$ be a  locally finite $\K$-ringoid,  $\B:=\{\B_{i}\}_{i<\alpha}$ be an admissible family of subcategories of $\R,$
 $\Delta={}_\B\Delta$ and let $0\to N\xrightarrow{\alpha} M\xrightarrow{\beta}   E\to  0$ be an exact sequence
with $E\in \Delta(i)^{\oplus}$ and $j<i$. Then, for every $f\in \Hom(\overline{P}(j),M),$ we
have that $\imagen\,(f)\subseteq N$.
\end{lem}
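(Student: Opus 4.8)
The plan is to use the projectivity of $\overline{P}(j)$ together with the fact, already isolated in Lemma \ref{claveD} (c), that $\Hom(\Delta_e(j),\Delta_{e'}(i))=0$ whenever $j<i$. Since $\overline{P}(j)=\bigoplus_{r\in\sigma_j(\B)}Y(r)$ is projective, composing $f\colon\overline{P}(j)\to M$ with $\beta\colon M\to E$ gives a map $\beta f\colon\overline{P}(j)\to E$ with $E\in\Delta(i)^\oplus$; if I can show $\beta f=0$, then $\imagen(f)\subseteq\Ker\beta=\imagen\,(\alpha)\cong N$, which is exactly the claim.

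To show $\beta f=0$ it suffices, by the additivity of $\Hom$ and the decompositions $\overline{P}(j)=\bigoplus_r Y(r)$ and $E=\bigoplus_s\Delta_{e_s}(i)^{n_s}$, to check that every component $Y(r)\to\Delta_{e}(i)$ is zero for $r\in\sigma_j(\B)$ and $e\in\sigma_i(\B)$. By Yoneda's Lemma, $\Hom(Y(r),\Delta_e(i))\simeq\Delta_e(i)(r)$, and from Proposition \ref{DSuppFin} (a) we have $\Delta_e(i)(r)=\R(r,e)/I_{\cup_{t<i}\B_t}(r,e)$. Since $r\in\sigma_j(\B)\subseteq\B_j\subseteq\bigcup_{t<i}\B_t$ (here I use $j<i$), the identity $1_r$ lies in $I_{\cup_{t<i}\B_t}(r,r)$, hence every morphism $r\to e$ factors through $\bigcup_{t<i}\B_t$, so $I_{\cup_{t<i}\B_t}(r,e)=\R(r,e)$ and therefore $\Delta_e(i)(r)=0$. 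This forces the component $Y(r)\to\Delta_e(i)$ to vanish, hence $\beta f=0$.

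Actually the cleanest packaging is to avoid decomposing $E$ at all: each $Y(r)$ maps to the projective cover situation $Y(e)\twoheadrightarrow\Delta_e(i)$, but more directly, any morphism $g\colon Y(r)\to E$ with $E\in\Delta(i)^\oplus$ corresponds under Yoneda to an element of $E(r)=\bigoplus_s\Delta_{e_s}(i)(r)^{n_s}=0$, using that evaluation at $r$ commutes with finite direct sums and $\Delta_{e_s}(i)(r)=0$ as computed above. Thus $\Hom(\overline{P}(j),E)=0$ outright, so in particular $\beta f=0$ and the conclusion follows. The main obstacle is really just bookkeeping: making sure the index sets line up ($r\in\sigma_j(\B)$ with $j<i$ indeed lands inside $\bigcup_{t<i}\B_t$) and invoking the right form of Proposition \ref{DSuppFin} (a) to rewrite the trace as the ideal $I_{\cup_{t<i}\B_t}(-,e)$; no genuinely hard step is involved, and one could even quote Lemma \ref{claveD} (c) directly once $E$ is decomposed into standard summands.
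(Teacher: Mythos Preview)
Your proof is correct and follows the same overall strategy as the paper: both arguments reduce to showing $\Hom(\overline{P}(j),E)=0$, then conclude $\beta f=0$ and hence $\imagen(f)\subseteq N$. The only difference is in how the vanishing $\Hom(Y(r),\Delta_e(i))=0$ is verified: the paper lifts a map $\overline{P}(j)\to\Delta_e(i)$ through the projective cover $P_e^{op}(i)\twoheadrightarrow\Delta_e(i)$ and observes the lift lands in the trace $U_e(i)$, whereas you invoke Yoneda to identify $\Hom(Y(r),\Delta_e(i))\simeq\Delta_e(i)(r)$ and then use the ideal description from Proposition~\ref{DSuppFin}(a) to see this quotient is zero. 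Your route is slightly more direct and avoids the lifting step, but both are short computations expressing the same fact: morphisms out of $r\in\bigcup_{t<i}\B_t$ die in $\Delta_e(i)$.
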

\begin{proof} We assert that $\mathrm{Hom}(\overline{P}(j),E)=0.$ Indeed, let
 $E=\bigoplus_{e \in J_{i}}\Delta_{e}(i)^{d_{e}},$ where $J_{i}\subseteq \sigma_i(\B)$ is a finite subset.
Since $$\Hom(\overline{P}(j),E)=\prod_{e\in J_{i}}\Hom (\overline{P}(j),\Delta_{e}(i))^{d_{e}},$$ it is enough to see that
$\Hom(\overline{P}(j),\Delta_{e}(i))=0$ for every $e\in J_{i}$.\\
Let $f:\overline{P}(j)\fl \Delta_{e}(i)$ be a morphism. Note that $\overline{P}(j)$ is projective, and thus,  there exists a morphism
$g:\overline{P}(j)\fl P^{op}_{e}(i)$ such that the following diagram commutes
$$\xymatrix{ & &  & \overline{P}(j)\ar[dl]_{g}\ar[d]^{f}\\
0\ar[r] & U_{e}(i)\ar[r]^(0.5){\gamma_{e}(i)} & P^{op}_{e}(i)\ar[r]^(0.5){\delta_{e}(i)} &
\Delta_{e}(i)\ar[r] &  0,}$$
where $U_{e}(i):=\mathrm{Tr}_{\oplus_{j<i}\overline{P}(j)}(P^{op}_{e}(i))$. Then, there is
$g':\overline{P}(j)\fl U_{e}(i)$ such that $g=\gamma_{e}(i)g';$ and thus, we have that
$f=\delta_{e}(i)g=\delta_{e}(i)\gamma_{e}(i)g'=0$. Proving that $\Hom(\overline{P}(j),\Delta_{e}(i))=0$.
\

Let $f:\overline{P}(j)\fl M$ be a morphism.  Hence $\beta f\in \Hom(\overline{P}(j),E)=0$ and therefore  $\mathrm{Im}\,(f)\subseteq N$.
\end{proof}

\begin{pro} \label{imagenbaja} Let $\R$ be a  locally finite $\K$-ringoid,  $\B:=\{\B_{i}\}_{i< \alpha}$  be an admissible family of subcategories of $\R,$
 $\Delta={}_\B\Delta,$ and $0\neq M\in \ltt{F}_{f}(\Delta).$ Consider a strictly well ordered filtration
$$\xi:\quad 0=M_{0}\subsetneq M_{1}\subsetneq  M_{2}\subsetneq \dots \subsetneq M_{a-1}\subsetneq M_{a}=M,$$
where $M_{k}/M_{k-1}\in \Delta(i_{k})^{\oplus}$ and $ i_{1}<i_{2} <\dots < i_{a-1}<i_{a}.$ Then, for any morphism
 $f:\overline{P}(j)\fl M,$  with
$j\leq i_{k}$ and $k\in [1,a],$  we have that $\mathrm{Im}\,(f)\subseteq N$ where
\begin{displaymath}
N=\left\{\begin{array}{ll}
M_{k}& \textrm{if $j=i_{k}, $}\\
{}\\
M_{k-1} & \textrm{if $j<i_{k}.$}
\end{array} \right.
\end{displaymath}
\end{pro}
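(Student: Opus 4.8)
The natural approach is induction on the number $a$ of steps in the strictly well ordered filtration, with the inductive engine being Lemma \ref{ImagenN}, which handles exactly one step of a filtration. First I would dispose of the base case $a=1$: here $M=M_1\in\Delta(i_1)^\oplus$, and the only case allowed by the hypothesis $j\le i_k$ with $k\in[1,a]$ is $j\le i_1$. If $j<i_1$, then $\mathrm{Im}(f)\subseteq M_0=0$ follows from the vanishing $\Hom(\overline{P}(j),\Delta_e(i_1))=0$ for $j<i_1$ established inside the proof of Lemma \ref{ImagenN} (or directly from Lemma \ref{claveD} (c) applied to each local summand of $M$, together with Yoneda); if $j=i_1$, the conclusion $\mathrm{Im}(f)\subseteq M_1=M$ is trivial. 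So the statement holds for $a=1$.

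For the inductive step, suppose $a\ge 2$ and the result holds for all strictly well ordered filtrations of length $<a$. Consider the exact sequence
$$0\to M_{a-1}\xrightarrow{\ \iota\ } M\xrightarrow{\ \pi\ } M/M_{a-1}\to 0,$$
where $M/M_{a-1}\in\Delta(i_a)^\oplus$. Given $f:\overline{P}(j)\to M$ with $j\le i_k$ for some $k\in[1,a]$, I would split into two cases according to whether $k=a$ or $k<a$. If $k<a$, then in particular $j\le i_k<i_a$, so $j<i_a$, and Lemma \ref{ImagenN} (with $N=M_{a-1}$, $E=M/M_{a-1}\in\Delta(i_a)^\oplus$, and the strict inequality $j<i_a$) gives $\mathrm{Im}(f)\subseteq M_{a-1}$. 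Then $f$ factors as $\overline{P}(j)\xrightarrow{f'} M_{a-1}\xrightarrow{\iota} M$, and $0\ne M_{a-1}\in\ltt{F}_f(\Delta)$ carries the strictly well ordered filtration $0=M_0\subsetneq\cdots\subsetneq M_{a-1}$ of length $a-1$ with the same exponents $i_1<\cdots<i_{a-1}$; by the inductive hypothesis applied to $f'$ (note $j\le i_k$ with $k\in[1,a-1]$), we get $\mathrm{Im}(f')\subseteq M_k$ if $j=i_k$ and $\mathrm{Im}(f')\subseteq M_{k-1}$ if $j<i_k$, and since $\iota$ is a monomorphism this transfers verbatim to $\mathrm{Im}(f)$.

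The remaining case $k=a$, i.e. $j\le i_a$, is where the two subcases of the conclusion split. If $j<i_a$, the same application of Lemma \ref{ImagenN} as above gives $\mathrm{Im}(f)\subseteq M_{a-1}=M_{k-1}$, which is exactly the claim. If $j=i_a=i_k$, the claim is $\mathrm{Im}(f)\subseteq M_a=M$, which is trivially true. This exhausts all cases, completing the induction. The only genuinely nontrivial input is Lemma \ref{ImagenN}, and the heart of it — the vanishing $\Hom(\overline{P}(j),\Delta_e(i))=0$ for $j<i$ via projectivity of $\overline{P}(j)$ lifting against the defining epimorphism $P^{op}_e(i)\to\Delta_e(i)$ — is already proven there; so I do not expect a real obstacle here, only the bookkeeping of indices. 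The one point requiring a little care is making sure that when $f$ is corestricted to $M_{a-1}$ the resulting map is a legitimate morphism of right $\R$-modules to which the inductive hypothesis applies, which is immediate since $M_{a-1}\hookrightarrow M$ is a kernel and $\Mod_\rho(\R)$ is abelian.
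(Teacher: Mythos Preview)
Your proof is correct and follows essentially the same approach as the paper: both peel off the top layer of the filtration using Lemma \ref{ImagenN} and then descend. The only cosmetic difference is that you package the descent as a formal induction on $a$, whereas the paper unrolls it as an explicit iterated factorization $f=u_a u_{a-1}\cdots u_{k+1}\overline{f}$ through $M_{a-1}, M_{a-2},\dots, M_k$ (and one more step if $j<i_k$); the content is identical.
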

\begin{proof}
Let $f:\overline{P}(j)\fl M$ with $j\leq i_{k} $ and $k\in [1,a].$  We consider
the following diagram
$$\xymatrix{ & & \overline{P}(j)\ar[d] & \\
0\ar[r] & M_{a-1}\ar[r]^{u_{a}} & M_{a}\ar[r]^{\pi_{a}} & X_{a}\ar[r] & 0,}$$
where $X_{a}\in \Delta(i_{a})^{\oplus}.$
Since $j\leq i_{k}<i_{a}$, by Lemma \ref{ImagenN}, there
exists a morphism $v_{a}:\overline{P}(j)\fl M_{a-1}$ such that $f=u_{a}v_{a}$.\\
Now, consider the diagram
$$\xymatrix{ & & \overline{P}(j)\ar[d]^{v_{a}} & \\
0\ar[r] & M_{a-2}\ar[r]^{u_{a-1}} & M_{a-1}\ar[r]^{\pi_{a-1}} & X_{a-1}\ar[r] & 0,}$$
where $X_{a-1}\in \Delta(i_{a-1})^{\oplus}.$
Since $j\leq i_{k}< i_{a-1}< i_{a}$, by Lemma \ref{ImagenN} there
exists a morphism $v_{a-1}:\overline{P}(j)\fl M_{a-2}$ such that $v_{a}=u_{a-1}v_{a-1}$. By iterating the same argument, we get the following diagram
$$\xymatrix{ & & \overline{P}(j)\ar[d]^{v_{k+2}} & \\
0\ar[r] & M_{k}\ar[r]^(0.6){u_{k+1}} & M_{k+1}\ar[r]^{\pi_{k+1}} & X_{k+1}\ar[r] & 0,}$$
where $X_{k+1}\in \Delta(i_{k+1})^{\oplus}$.
Since $j\leq i_{k}<i_{k+1}$, by Lemma \ref{ImagenN}, there exists $v_{k+1}:\overline{P}(j)\fl M_{k}$ such that
$v_{k+2}=u_{k+1}v_{k+1}$.
Then, by taking $\overline{f}:=v_{k+1},$ we have that $f=u_{a}u_{a-1}\dots u_{k+1}\overline{f}$.
Therefore $\mathrm{Im}(f)\subseteq M_{k}.$\\
Now if $j<i_{k},$ we consider the diagram
$$\xymatrix{ & & \overline{P}(j)\ar[d]^{v_{k+1}} & \\
0\ar[r] & M_{k-1}\ar[r]^(0.6){u_{k}} & M_{k}\ar[r]^{\pi_{k}} & X_{k}\ar[r] & 0,}$$
where $X_{k}\in \Delta(i_{k})^{\oplus}$.
Since $j<i_{k}$, there exists  $v_{k}:\overline{P}(j)\fl M_{k-1}$ such that  $v_{k+1}=u_{k}v_{k}$.
Then, by taking $\overline{f}:=v_{k}$ we have that $f=u_{a}u_{a-1}\dots u_{k}\overline{f}$.
Therefore $\mathrm{Im}(f)\subseteq M_{k-1}$.
\end{proof}

\begin{defi} Let $\R$ be a  locally finite $\K$-ringoid,  and let $\B:=\{\B_{i}\}_{i<\alpha}$  be an admissible family of subcategories of $\R.$ For any
$M\in\Mod_\rho(\R),$ the
{\bf $i$-th $\tau$-section} of $M$ is the quotient $\tau_i/\overline{\tau}_i(M).$ The {\bf support} of the $\B$-trace filtration of $M$ is the set
$$\Supp(\tau_{\B,M}):=\{i<\alpha\;:\; \tau_i/\overline{\tau}_i(M)\neq 0\}.$$
\end{defi}

\begin{teo}\label{filtraza} Let $\R$ be a  locally finite $\K$-ringoid,  $\B:=\{\B_{i}\}_{i<\alpha}$ be  an admissible family of subcategories of $\R,$
 $\Delta={}_\B\Delta,$ and $M\in\Mod_\rho(\R).$ Then, the following statements are equivalent.
 \begin{itemize}
 \item[(a)] $M$ has a finite $\Delta$-filtration.
 \item[(b)] There exist some  $i_0<\alpha$ such that $\tau_j(M)=M$ for any $j\geq i_0,$ $\Supp(\tau_{\B,M})$ is finite and
 $\tau_i/\overline{\tau}_i(M)\in\Delta(i)^{\oplus},$ for any $i<\alpha.$

 \end{itemize}
\end{teo}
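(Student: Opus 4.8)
The plan is to establish the two implications separately, using the machinery of trace filtrations and the ladder/strictly-well-ordered filtrations developed in Section 3. For the direction (a) $\Rightarrow$ (b), I would start from a strictly well ordered $\Delta$-filtration of $M$, which exists by Proposition \ref{filtracionordenada} (c): say
$$0=M_0\subsetneq M_1\subsetneq\cdots\subsetneq M_a=M$$
with $M_k/M_{k-1}\in\Delta(i_k)^\oplus$ and $i_1<i_2<\cdots<i_a$. The key identification to prove is that $M_k=\tau_{i_k}(M)$ for each $k$, and more generally $\tau_j(M)=M_k$ when $i_k\le j<i_{k+1}$ (with $\tau_j(M)=0$ for $j<i_1$ and $\tau_j(M)=M$ for $j\ge i_a$). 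The inclusion $M_k\subseteq\tau_{i_k}(M)$ follows because each subquotient $M_s/M_{s-1}$ with $s\le k$ is a direct sum of $\Delta_e(i_s)$'s, each of which is a quotient of some $\overline{P}(i_s)$ with $i_s\le i_k$; a short induction on $k$ using that $\tau_{i_k}$ is a pre-radical (closed under extensions by submodules/quotients in the appropriate sense, cf. Lemma \ref{SubfTr2}) gives $M_k\subseteq\tau_{i_k}(M)$. The reverse inclusion $\tau_{i_k}(M)\subseteq M_k$ is exactly what Proposition \ref{imagenbaja} delivers: any $f:\overline{P}(j)\to M$ with $j\le i_k$ has image in $M_k$, so summing over all such $f$ with $j\le i_k$ gives $\tau_{i_k}(M)\subseteq M_k$. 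Once $\tau_{i_k}(M)=M_k$ is known, it follows that $\overline{\tau}_{i_k}(M)=\sum_{j<i_k}\tau_j(M)=M_{k-1}$ (using Lemma \ref{SubfTr2}(b) and the fact that the $i_j$ are strictly increasing, so all $\tau_j(M)$ for $j<i_k$ equal some $M_s$ with $s\le k-1$), whence $\tau_{i_k}/\overline{\tau}_{i_k}(M)=M_k/M_{k-1}\in\Delta(i_k)^\oplus$; for indices $i$ not among the $i_k$ one gets $\tau_i(M)=\tau_j(M)$ for the appropriate neighboring index, so the $i$-th $\tau$-section vanishes. This yields all three assertions in (b), with $i_0:=i_a$ and $\Supp(\tau_{\B,M})=\{i_1,\dots,i_a\}$.

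For the direction (b) $\Rightarrow$ (a), I would argue by a finite descending/ascending induction on $\Supp(\tau_{\B,M})=\{i_1<\cdots<i_a\}$, which is finite by hypothesis. Set $M_k:=\tau_{i_k}(M)$. By Lemma \ref{SubfTr2}(c), the $\tau_{i_k}(M)$ form a chain $0\subseteq M_1\subseteq M_2\subseteq\cdots\subseteq M_a$, and by the hypothesis that $\tau_j(M)=M$ for $j\ge i_0$ together with finiteness of the support we get $M_a=M$ (picking $i_a\ge i_0$, or noting that beyond $i_a$ all $\tau$-sections vanish so $\tau_j(M)$ stabilizes at $M$). Also $\overline{\tau}_{i_k}(M)=\sum_{j<i_k}\tau_j(M)=M_{k-1}$ by the same reasoning as above (all $\tau_j(M)$ with $j<i_k$ coincide with one of $M_1,\dots,M_{k-1}$ or $0$, since the support is $\{i_1,\dots,i_a\}$). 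Hence the successive quotients of the chain $0=M_0\subseteq M_1\subseteq\cdots\subseteq M_a=M$ are precisely $M_k/M_{k-1}=\tau_{i_k}/\overline{\tau}_{i_k}(M)\in\Delta(i_k)^\oplus$ by hypothesis. This is a finite filtration of $M$ with all factors in $\bigcup_i\Delta(i)^\oplus$, so $M\in\ltt{F}_f(\Delta)$ (equivalently $\ltt{F}'_f(\Delta)$, by Proposition \ref{f=f'}).

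**Expected main obstacle.** The delicate point is the identification $\tau_{i_k}(M)=M_k$ in the (a) $\Rightarrow$ (b) direction, specifically the inclusion $\tau_{i_k}(M)\subseteq M_k$, and ensuring that $\overline{\tau}_{i_k}(M)$ is exactly $M_{k-1}$ and not something strictly larger or smaller. The inclusion into $M_k$ is handled by Proposition \ref{imagenbaja}, but one must be careful that it applies to \emph{every} generator of $\tau_{i_k}(M)$, i.e. to all morphisms $\overline{P}(j)\to M$ with $j\le i_k$ (not merely $j=i_k$), and that the $\overline{P}(j)$ for $j\le i_k$ that do not appear as an $i_s$ still land in $M_k$—this is covered by the "$j<i_k$, $j$ between $i_{s}$ and $i_{s+1}$" branch of Proposition \ref{imagenbaja}. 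The other subtlety is bookkeeping with the support: one must verify that using a \emph{strictly} well ordered filtration (Proposition \ref{filtracionordenada}(c)) is essential, because it guarantees the indices $i_k$ are distinct, which is what makes $\overline{\tau}_{i_k}(M)=\sum_{j<i_k}\tau_j(M)$ collapse to exactly $M_{k-1}$ rather than to a proper submodule of it. Apart from this, the argument is a transfinite-free finite induction, since the finiteness of $\Supp(\tau_{\B,M})$ and of the filtration length reduce everything to ordinary induction.
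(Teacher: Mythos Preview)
Your proposal is correct and follows the same overall strategy as the paper: for (a) $\Rightarrow$ (b), pass to a strictly well ordered $\Delta$-filtration via Proposition \ref{filtracionordenada}(c), use Proposition \ref{imagenbaja} for the inclusion $\tau_{i_k}(M)\subseteq M_k$, and use the projective covers $\overline{P}(i_s)\twoheadrightarrow\Delta(i_s)^\oplus$ (together with an induction on $k$) for the reverse inclusion; for (b) $\Rightarrow$ (a), read off the finite filtration $M_k:=\tau_{i_k}(M)$ from the finite support.

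There is, however, one point you mischaracterize. You claim the argument is ``transfinite-free'' because the support is finite, but this is not so in the direction (b) $\Rightarrow$ (a). Knowing only that $\tau_i(M)=\overline{\tau}_i(M)=\sum_{j<i}\tau_j(M)$ for every $i$ outside the finite support $\{i_1<\cdots<i_a\}$ does \emph{not} immediately yield that $\tau_j(M)$ is constant on each interval $[i_s,i_{s+1})$, because $\alpha$ may be an arbitrary ordinal and the interval $[i_s,i_{s+1})$ may itself be infinite (even uncountable). One must prove, by transfinite induction on the ordinal $j$, that $\tau_j(M)=0$ for all $j\in[0,i_1)$, then $\tau_j(M)=M_1$ for all $j\in[i_1,i_2)$, and so on, handling successor and limit ordinals separately; the paper carries this out explicitly. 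The same transfinite step is needed to conclude $M_a=M$ in the case $i_a<i_0$ (your phrase ``$\tau_j(M)$ stabilizes at $M$'' hides exactly this induction), and also to handle the degenerate case $\Supp(\tau_{\B,M})=\emptyset$, where one must show $\tau_j(M)=0$ for all $j$ and hence $M=\tau_{i_0}(M)=0$. Once this point is acknowledged, your plan coincides with the paper's proof.
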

\begin{dem}
(a) $\Rightarrow$ (b)
Let $0\neq M\in \ltt{F}_{f}(\Delta).$ Consider a strictly well ordered filtration
$$\xi:\quad 0=M_{0}\subsetneq M_{1}\subsetneq  M_{2}\subsetneq \dots \subsetneq M_{a-1}\subsetneq M_{a}=M,$$
where $M_{k}/M_{k-1}\in \Delta(i_{k})^{\oplus}$ and $ i_{1}<i_{2} <\dots < i_{a-1}<i_{a}.$
We have the following filtration $\Omega,$ which is composed of the following pieces

\begin{align*}
\Omega_{0}:\quad  &\quad 0=N_{0}=N_{1}= \cdots =N_{i'_{1}}= M_{0}\quad\forall\,i'_1\in[0,i_1),\\
\Omega_{1}:\quad  & \subsetneq N_{i_{1}}=N_{i_{1}+1}= \cdots =N_{i'_{2}}=M_1\quad\forall\,i'_2\in[i_1,i_2),\\
\Omega_{2}:\quad &  \subsetneq N_{i_{2}}=N_{i_{2}+1}=\cdots =N_{i'_{3}}=M_2\quad\forall\,i'_3\in[i_2,i_3),\\
 {} \quad & \quad \quad\quad\quad\quad   \dots \dots \dots \dots {} \\
\Omega_{a-2}:\quad & \subsetneq N_{i_{a-2}}=N_{i_{a-2}+1}=\cdots =N_{i'_{a-1}}=M_{a-2}\quad\forall\,i'_{a-1}\in[i_{a-2},i_{a-1}),\\
\Omega_{a-1}:\quad  & \subsetneq N_{i_{a-1}}=N_{i_{a-1}+1}=\cdots =N_{i'_a}=M_{a-1}\quad\forall\,i'_a\in[i_{a-1},i_a),\\
\Omega _{a}:\quad  & \subsetneq N_{i_{a}} := M_a.
\end{align*}
In order to prove the result, it is enough to see that $\Omega=\tau_{\B,M}$ and $\tau_j(M)=M$ for any $j\geq i_a.$
\

Note that, for $j>i_a,$ we have $\tau_j(M)=\tau_{i_a}(M)+\mathrm{Tr}_{\bigoplus_{i_a<k\leq j}\overline{P}(k)}(M).$ Thus, we only need to check that  $\tau_{i_a}(M)=M$ and $N_{i}=\tau_i(M),$ for all $0\leq i\leq i_{a}.$ To perform that, we will follow a series of steps as follows.
\

(i) $\tau_{i'_1}(M)=M_0=0\quad\forall\,i'_1\in[0,i_1).$
\

Indeed, since $\tau_0(M)\subseteq  \tau_{i'_{1}}(M),$ it is enough to see that
$\tau_{i'_{1}}(M)=0.$ Let $f:\overline{P}(j)\fl M$ with $j\leq i'_1<i_{1}$.
By Proposition \ref{imagenbaja}, it follows that $\mathrm{Im}\,(f)\subseteq M_{0}=0,$ proving that $\tau_{i'_{1}}(M)=\mathrm{Tr}_{\bigoplus_{ j\leq i'_1}\overline{P}(j)}\,(M)=0.$
\

(ii) $\tau_{i'_2}(M)=M_{1}\in \Delta(i_{1})^{\oplus}\quad\forall\,i'_2\in[i_1,i_2).$
\

 Firstly, we assert that $\tau_{i_{1}}(M)=M_1.$ Indeed, note that $M_{1}\in \Delta(i_1)^{\oplus},$ since $M_{1}/M_{0}\in \Delta(i_1)^{\oplus}$. Thus
$M_{1}=\bigoplus_{e\in J_{i_{1}}}\Delta_{e}(i_{1})^{\mu_{e,1}}$ for some finite subset
 $J_{i_{1}}\subseteq \sigma_{i_1}(\B)$.
Observe now, that there exists an epimorphism
$$\bigoplus_{e\in J_{i_{1}}}P^{op}_{e}(i_{1})^{\mu_{e,1}}\fl  \bigoplus_{e\in J_{i_{1}}}\Delta_{e}(i_{1})^{\mu_{e,1}}
=M_{1}\subseteq M,$$
and therefore  $M_{1}\subseteq \mathrm{Tr}_{\bigoplus_{ j\leq i_1}\overline{P}(j)}\,(M) =  \tau_{i_{1}}(M).$
On the other hand
$$\tau_{i_{1}}(M)=\mathrm{Tr}_{\overline{P}(i_{1})}(M)+
\mathrm{Tr}_{\bigoplus_{ j< i_1}\overline{P}(j)}\,(M) =\mathrm{Tr}_{\overline{P}(i_{1})}(M)$$
since, by Proposition \ref{imagenbaja}, we know that  $\mathrm{Tr}_{\{\overline{P}(j)\mid j< i_{1}\}}(M)=0$.  Let $f\in\Hom(\overline{P}(i_{1}),M).$ Then,  by Proposition \ref{imagenbaja}, we have that $\mathrm{Im}\,(f)\subseteq M_{1}$ and so $\tau_{i_{1}}(M)\subseteq M_{1};$ proving that $\tau_{i_{1}}(M)=M_{1}.$
\

At this point, we have $M_1=\tau_{i_{1}}(M)\subseteq \tau_{i'_{2}}(M).$ To finish the proof of (ii), we only have to see that $\tau_{i'_{2}}(M)\subseteq \tau_{i_{1}}(M).$ Let
$j\leq i'_2< i_{2}$ and $f\in\Hom(\overline{P}(j),M).$ Then, by Proposition \ref{imagenbaja}, it follows that
$\mathrm{Im}\,(f)\subseteq M_{1}=\tau_{i_{1}}(M)$ and thus  $\tau_{i'_{2}}(M)\subseteq \tau_{i_{1}}(M).$
\

(iii) $\tau_{i'_{3}}(M)= M_{2}\quad\forall\,i'_3\in[i_2,i_3).$
\

Firstly, we assert that $\tau_{i_{2}}(M)=M_{2}.$ Indeed, consider the exact sequence

$$\xymatrix{0\ar[r] & M_{1} \ar[r] & M_{2}\ar[r] & X_{2}\ar[r] & 0,}$$
where $X_{2}\in \Delta(i_{2})^{\oplus}$. By (ii), we know that $M_{1}=\mathrm{Tr}_{Q_1}\,(M),$ where $Q_1:=\bigoplus_{j\leq i_1}\overline{P}(j).$
There exists an epimorphism $f:Q_1^{(I_1)}\to M_{1},$ for the set $I_1:=\Hom(Q_1,M_1).$ On the other hand,
since $X_{2}\in \Delta(i_{2})$, there exists an epimorphism $h:\overline{P}(i_{2})^{m_{2}}\fl X_{2}.$
Then, we have the following exact and commutative diagram
$$\xymatrix{ 0\ar[r] & Q_1^{(I_1)}\ar[r]\ar[d]_{f} & Q_1^{(I_1)}\bigoplus\overline{P}(i_{2})^{m_{2}}\ar[r]\ar[d]_{g}
& \overline{P}(i_{2})^{m_{2}}\ar[r]\ar[d]_{h} & 0\\
0\ar[r] & M_{1}\ar[r] & M_{2}\ar[r] & X_{2}\ar[r] & 0,}$$
where $g$ is an epimorphism. Therefore
$$M_{2}\subseteq \mathrm{Tr}_{\bigoplus_{ j\leq  i_2}\overline{P}(j)}\,(M)=\tau_{i_{2}}(M).$$
Now, let  $f:\overline{P}(j)\fl M$ with $j\leq i_{2}.$ Then,  By Proposition \ref{imagenbaja},
we get that $\mathrm{Im}\,(f)\subseteq N,$ where $N=M_{1}$ or $N=M_{2}.$ In any case, we conclude that
$\mathrm{Im}\,(f)\subseteq M_{2},$ since $M_{1}\subseteq M_{2}$. Hence $\tau_{i_{2}}(M)\subseteq M_2$ and so
$\tau_{i_{2}}(M)=M_{2}.$ Now, by following the same arguments as we did in (ii), we can show that $\tau_{i'_{3}}(M)=\tau_{i_{2}}(M);$ proving (iii).
\

Note that the above procedure can be repeated in order to get that $N_{i}=\tau_i(M),$ for all $0\leq i\leq i_{a}.$ Finally, by following the process we did in (iii), we obtain an epimorphism $Q_{a-1}^{(I_{a-1})}\bigoplus\overline{P}(i_{a})^{m_{a}}\to M$ and thus
$\tau_{i_{a}}(M)=M.$
\

(b) $\Rightarrow$ (a) Assume the hypothesis of (b). If $\Supp(\tau_{\B,M})=\emptyset,$ by using transfinite induction, it can be shown that $M=0$ and thus
$M\in\F_f(\Delta).$
\

Let $\Supp(\tau_{\B,M})=\{i_1<i_2<\cdots <i_a\}.$ Consider $M_0:=\tau_0(M)$ and $M_k:=\tau_{i_k}(M)$ for $k\in[1,a].$ Note that, for any
$i\not\in\Supp(\tau_{\B,M}),$ Lemma \ref{SubfTr2} implies that $\tau_i(M)=\sum_{j<i}\tau_j(M).$
\

In the proofs of the following assertions, we use transfinite induction.
\

$(0)$ $\tau_{i'_1}(M)=M_0=0$ for any $i'_1\in[0,i_1).$
\

Indeed, let $S_{i_1}=\{i'_1\in[0,i_1)\;:\;\tau_{i'_1}(M)=0\}.$ Note that $0\in S_{i_1}$ since
$\tau_0(M)=\sum_{j<0}\tau_j(M)=0.$ Let $\beta+1\in[0,i_1)$ and $\beta\in S_{i_1}.$ Since $j<\beta+1$ implies that $j\leq\beta,$ it follows that
$\tau_{\beta+1}(M)=\sum_{j<\beta+1}\tau_j(M)\subseteq\tau_\beta(M)=0,$ and thus $\beta+1\in S_{i_1}.$ \\
Let $\gamma\in[0,i_1)$ be a limit ordinal and let $\delta\in S_{i_1}$ for any $\delta<\gamma.$ Then $\tau_\gamma(M)=\sum_{\delta<\gamma}\tau_j(M)=0.$ Thus, by transfinite induction, we get that $(0)$ holds.
\

$(1)$ $\tau_{i'_2}(M)=M_1$ for any $i'_2\in[i_1,i_2).$
\

Indeed, let $S_{i_2}=\{i'_2\in[i_1,i_2)\;:\;\tau_{i'_2}(M)=M_1\}.$ It is clear that $i_1\in S_{i_2}.$ Let $i_1<\beta+1<i_2$ and $\tau_\beta(M)=M_1.$ Then,
$M_1\subseteq\tau_{\beta+1}(M)=\sum_{j<\beta+1}\tau_j(M)\subseteq\tau_\beta(M)=M_1$ and hence $\beta+1\in S_{i_2}.$\\
Let $\gamma\in[i_1,i_2)$ be a limit ordinal and let $\delta\in S_{i_2}$ for any $\delta\in[i_1,\gamma).$ Then, by using $(0)$ we can get the following equalities
\begin{align*}
\tau_\gamma(M)&=\sum_{j<\gamma}\tau_j(M)\\
                         &=\sum_{j<i_1}\tau_j(M)+\sum_{i_1\leq \delta<\gamma}\tau_\delta(M)\\
                         &=M_1.
\end{align*}
Thus, by transfinite induction, we get that $(1)$ holds.
\

Note that the above procedure in $(0)$ and $(1)$ can be repeated to obtain that $\tau_{i'_k}(M)=M_k$ for any $i'_k\in[i_{k-1},i_k),$ and $\tau_j(M)=M$ for
$j\geq i_a.$ Thus, we have a finite chain of submodules $0\subseteq M_0\subseteq M_1\subseteq \cdots\subseteq M_a=M$ such that
$M_t/M_{t-1}=\tau_{i_t}/\overline{\tau_{i_t}} (M)\in\Delta(i_t).$ Therefore, $M\in\F_f(\Delta).$
\end{dem}

\begin{rk}\label{Rkfiltraza} Let $\R$ be a  locally finite $\K$-ringoid,  $\B:=\{\B_{i}\}_{i<\alpha}$  be an admissible family of subcategories of $\R,$
 $\Delta={}_\B\Delta,$ and $M\in\Mod_\rho(\R)$ be such that $\Supp(\tau_{\B,M})=\{i_1<i_2<\cdots<i_a\},$ for some finite ordinal $a.$ In the proof of Theorem \ref{filtraza}, we have shown the following:
\begin{itemize}
\item[(a)] $\tau_j(M)=0$ for all $j\in[0,i_1);$
\item[(b)] $\tau_j(M)=M_k:=\tau_{i_k}(M)$ for all $j\in[i_k,i_{k+1})$ and $k\in[1,a);$
\item[(c)] the finite chain of submodules $0\subseteq M_0\subseteq M_1\subseteq \cdots\subseteq M_a=M$ satisfies  that
$M_t/M_{t-1}=\tau_{i_t}/\overline{\tau_{i_t}} (M).$
\end{itemize}
\end{rk}

\begin{teo}\label{filt-sum-direct}   Let $\R$ be a locally finite $\K$-ringoid,  $\B:=\{\B_{i}\}_{i< \alpha}$ be
an admissible family of subcategories of $\R$ and $\Delta={}_\B\Delta$. If $\Delta\subseteq\finp_\rho(\R)$ then all the objects $\Delta_e(i)$ are local and  the following statements hold true.
\begin{itemize}
\item[(a)] For any $M\in\F_f(\Delta),$ the filtration multiplicity $[M:\Delta_{e}(i)]$ does not depend on a given $\Delta$-filtration of $M.$
\item[(b)] $\F_{f}(\Delta)\subseteq\finp_\rho(\R)$ and it is  a locally finite $\K$-ringoid.
\end{itemize}
\end{teo}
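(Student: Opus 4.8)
The plan is to first dispose of the assertion that all $\Delta_e(i)$ are local, which is immediate from Lemma \ref{claveD} (a), since by hypothesis $\Delta\subseteq\finp_\rho(\R)$. The serious content is in parts (a) and (b). For part (b) I would first establish that $\F_f(\Delta)\subseteq\finp_\rho(\R)$; the idea is to proceed by induction on the $\Delta$-length $\ell_\Delta(M)$. The base case $\ell_\Delta(M)=1$ means $M\in\Delta^\oplus$, a finite direct sum of objects $\Delta_e(i)\in\finp_\rho(\R)$, hence finitely presented since $\finp_\rho(\R)$ is closed under finite coproducts. For the inductive step, given a short exact sequence $0\to M'\to M\to X\to 0$ with $\ell_\Delta(M')<\ell_\Delta(M)$ and $X\in\Delta^\oplus$, I would invoke Proposition \ref{LARp3}: $\finp_\rho(\R)$ is a locally finite (in particular abelian, since locally finite ringoids with pseudokernels give abelian $\finp$, but more simply $\finp_\rho(\R)$ is closed under extensions in $\Mod_\rho(\R)$ by \cite[Proposition 4.2]{Aus}). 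So $M\in\finp_\rho(\R)$. Once this inclusion is known, $\F_f(\Delta)$ inherits $\Hom$-finiteness from $\finp_\rho(\R)$; to see it is a Krull-Schmidt $\K$-ringoid one checks it is closed under direct summands — this is where part (a) and the trace filtration characterization of Theorem \ref{filtraza} are needed — and then applies \cite[Corollary 4.4]{Krause} together with the fact that endomorphism rings of objects in $\finp_\rho(\R)$ are semiperfect.

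**The multiplicity argument (part (a)).** This is the heart of the proof and, I expect, the main obstacle. The strategy is to show that the filtration multiplicity $[M:\Delta_e(i)]_{\xi,D_i(\xi)}$ is in fact independent of both $\xi$ and $D_i(\xi)$, by computing it intrinsically from the $\B$-trace filtration $\tau_{\B,M}=\{\tau_i(M)\}_{i<\alpha}$. The point is: by Proposition \ref{filtracionordenada}, any finite $\Delta$-filtration $\xi$ of $M$ can be replaced, without changing the multiplicities $[M:\Delta_e(i)]$, by a strictly well ordered filtration $\varepsilon$. By the proof of Theorem \ref{filtraza} (see Remark \ref{Rkfiltraza}), a strictly well ordered filtration of $M$ coincides with the trace filtration: the nonzero layers occur precisely at $i\in\Supp(\tau_{\B,M})$ and the $i$-th layer is $\tau_i/\overline{\tau_i}(M)$. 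Since $\tau_i(M)=\Tr_{\bigoplus_{j\le i}\overline P(j)}(M)$ and $\overline{\tau_i}(M)=\Tr_{\bigoplus_{j<i}\overline P(j)}(M)$ are defined purely in terms of $M$ and $\B$, the layer $\tau_i/\overline{\tau_i}(M)\in\Delta(i)^\oplus$ is an invariant of $M$. Finally, since $\Delta\subseteq\finp_\rho(\R)$, all $\Delta_e(i)$ are local (Lemma \ref{claveD} (a)), so the Krull-Schmidt property of $\finp_\rho(\R)$ (Proposition \ref{LARp3}) forces the decomposition $\tau_i/\overline{\tau_i}(M)=\bigoplus_{e\in\sigma_i(\B)}\Delta_e(i)^{m_{e,i}}$ to be unique up to isomorphism; hence $m_{e,i}=[M:\Delta_e(i)]$ is well defined, independent of all choices.

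**Assembling the proof.** So the ordering of steps is: (1) note $\Delta_e(i)$ local via Lemma \ref{claveD}(a); (2) prove $\F_f(\Delta)\subseteq\finp_\rho(\R)$ by induction on $\Delta$-length using closure of $\finp_\rho(\R)$ under extensions and finite coproducts; (3) prove (a) by reducing an arbitrary filtration to a strictly well ordered one via Proposition \ref{filtracionordenada}, identifying it with the trace filtration via Theorem \ref{filtraza}/Remark \ref{Rkfiltraza}, and using uniqueness of Krull-Schmidt decompositions of the layers; (4) deduce $\F_f(\Delta)$ is closed under summands: if $M=N\oplus N'$ with $M\in\F_f(\Delta)$, then the trace filtration of $M$ restricts to $N$ (traces commute with direct sums, $\tau_i(N\oplus N')=\tau_i(N)\oplus\tau_i(N')$), so each layer $\tau_i/\overline{\tau_i}(N)$ is a summand of $\tau_i/\overline{\tau_i}(M)\in\Delta(i)^\oplus$, hence lies in $\Delta(i)^\oplus$ (summands of objects in $\Delta(i)^\oplus$ are again in $\Delta(i)^\oplus$ because the $\Delta_e(i)$ are local and $\finp_\rho(\R)$ is Krull-Schmidt), and Theorem \ref{filtraza}(b)$\Rightarrow$(a) then gives $N\in\F_f(\Delta)$; (5) conclude $\F_f(\Delta)$ is $\Hom$-finite (inherited from $\finp_\rho(\R)$), additive and idempotent-complete, with semiperfect endomorphism rings, hence a Krull-Schmidt, indeed locally finite, $\K$-ringoid by \cite[Corollary 4.4]{Krause}. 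The main obstacle is step (3): getting the bookkeeping right so that Proposition \ref{filtracionordenada} and the trace-filtration description of Theorem \ref{filtraza} combine to pin the multiplicity down independently of the decompositions $D_i(\xi)$ — this is exactly where locality of the $\Delta_e(i)$, and hence the hypothesis $\Delta\subseteq\finp_\rho(\R)$, is indispensable.
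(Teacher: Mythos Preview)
Your proposal is correct and follows essentially the same route as the paper: locality of the $\Delta_e(i)$ via Lemma \ref{claveD}(a), the inclusion $\F_f(\Delta)\subseteq\finp_\rho(\R)$ by induction on $\Delta$-length using closure of $\finp_\rho(\R)$ under extensions, independence of multiplicities by passing through Proposition \ref{filtracionordenada} to a strictly well ordered filtration and identifying it with the trace filtration via Theorem \ref{filtraza}, and closure under direct summands by splitting the trace filtration along a decomposition $M=N\oplus N'$. The only point you leave implicit is that, when invoking Theorem \ref{filtraza}(b)$\Rightarrow$(a) for the summand $N$, you must also check the two finiteness conditions (namely $\Supp(\tau_{\B,N})\subseteq\Supp(\tau_{\B,M})$ is finite, and $\tau_{i_0}(N)=N$ since $\tau_{i_0}(N)\oplus\tau_{i_0}(N')=\tau_{i_0}(M)=M=N\oplus N'$ forces $N/\tau_{i_0}(N)=0$); the paper spells this out explicitly, but it is immediate from the splitting $\tau_i(N\oplus N')=\tau_i(N)\oplus\tau_i(N')$ you already use.
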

\begin{dem}  Let $\Delta\subseteq\finp_\rho(\R).$ By Proposition \ref{LARp3}, we have that $\finp_\rho(\R)$ is
a locally finite $\K$-ringoid. Moreover, by \cite[Proposition 4.2 (d)]{Aus} we have that $\Delta^\oplus\subseteq\finp_\rho(\R)$ and thus all $\Delta_e(i)$ are
local objects (see Lemma \ref{claveD} (a)).
\

(a) Let $0\neq M\in \F_{f}(\Delta).$  Since $\Delta^\oplus\subseteq\finp_\rho(\R)$ and all the objects $\Delta_e(i)$ are local, the proof of Theorem \ref{filtraza} implies that
$$[M:\Delta_e(i)]_\xi=[M:\Delta_e(i)]_{\xi'}=[M:\Delta_e(i)]_{\tau_{\B,M}},$$
where $\xi'$ is the strictly well ordered filtration of $M,$ constructed  by the proof of Proposition \ref{f=f'} and Proposition \ref{filtracionordenada}. Note that
$[M:\Delta_e(i)]_{\tau_{\B,M}}$ does not depend on any given filtration. Therefore the proof of (a) is complete.
\

(b) Let $ M\in \F_{f}(\Delta).$ Since $\F_{f}(\Delta)$ is closed under extensions and $\Delta^\oplus\subseteq\finp_\rho(\R),$ by induction on the
$\xi$-ladder length $\ell'_{\Delta,\xi}(M),$ we can show that $M\in\finp_\rho(\R).$ Therefore $\F_{f}(\Delta)\subseteq\finp_\rho(\R).$
\

Assume now that $M=L\bigoplus N$ in $\Mod_\rho(\R).$ Since $M\in\finp_\rho(\R),$ it follows that  $M=L\bigoplus N$ in $\finp_\rho(\R).$
Consider the split exact sequence
$$\xi:\quad \xymatrix{0\ar[r] & L\ar[r]^{f} & M\ar[r]^{g} & N\ar[r] & 0},$$
given by the decomposition $M=L\bigoplus N.$  Thus we have the following split exact sequence
$$\varepsilon_{i}(\xi):\quad \xymatrix{0\ar[r] & \varepsilon_{i}(L)\ar[r]^{\varepsilon_{i}(f)} & \varepsilon_{i}(M)\ar[r]^{\varepsilon_{i}(g)}
& \varepsilon_{i}(N)\ar[r] & 0,}$$
for $\varepsilon_{i}=\tau_i$ or $\varepsilon_{i}=\overline{\tau_{i}}.$ Note that
\[(*)\quad\tau_{i}/\overline{\tau_{i}}(M)=\frac{\tau_{i}(L)\oplus\tau_{i}(N)}{\overline{\tau}_{i}(L)\oplus\overline{\tau_{i}}(N)}=\frac{\tau_{i}(L)}{\overline{\tau}_{i}(L)}\bigoplus\frac{\tau_{i}(N)}{\overline{\tau}_{i}(N)}.\]

Since $M\in \F_{f}(\Delta),$ by Theorem \ref{filtraza}, there exists $i_{0}\in \mathbb{N}$ such that
$\tau_j(M)=M$ for any $j\geq i_{0}$. Moreover, by $ (*)$ we have $\tau_{i}/\overline{\tau_{i}}(M)\in\Delta(i)^\oplus$ for any $i<\alpha.$
\

From the split-exact sequences of the form $\varepsilon_{i}(\xi),$ for $\varepsilon_{i}=\tau_i$ or $\varepsilon_{i}=\overline{\tau_{i}},$  we get the following commutative and exact diagram,
where all the rows  are split exact sequences
$$\xymatrix{&\\&\\ (**)\\&\\&}\quad\xymatrix{ & 0 & 0 & 0\\
0\ar[r] & \tau_i/\overline{\tau}_i(L)\ar[r]\ar[u] & \tau_i/\overline{\tau}_i(M)\ar[r]\ar[u] &
\tau_i/\overline{\tau}_i(N)\ar[r]\ar[u] & 0\\
0\ar[r] & \tau_i(L)\ar[r]^{\tau_i(f)}\ar[u] & \tau_i(M)\ar[r]^{\tau_i(g)}\ar[u] & \tau_i(N)\ar[r]\ar[u] & 0\\
0\ar[r] & \overline{\tau}_i(L)\ar[r]^{\overline{\tau}_i(f)}\ar[u] & \overline{\tau}_i(M)\ar[r]^{\overline{\tau}_i(g)}\ar[u]
& \overline{\tau}_i(N)\ar[r]\ar[u] & 0\\
& 0\ar[u] & 0\ar[u] & 0\ar[u]}$$
If $ \tau_i/\overline{\tau}_i(M)=0,$ we conclude that
$ \tau_i/\overline{\tau}_i(L)=0=\ \tau_i/\overline{\tau}_i(N).$ In particular $\Supp(\tau_{\B,L})\cup\Supp(\tau_{\B,N})\subseteq\Supp(\tau_{\B,M}).$
\

Let $ \tau_i/\overline{\tau}_i(M)\neq 0$. Since
$ \tau_i/\overline{\tau}_i(M)\in\Delta(i)^{\oplus},$ it follows that
$$ \tau_i/\overline{\tau}_i(L)\bigoplus  \tau_i/\overline{\tau}_i(N)\in\Delta(i)^{\oplus}.$$
Since $\finp_\rho(\R)$ is a Krull-Schimidt category, we get that
$ \tau_i/\overline{\tau}_i(L)\in\Delta(i)^{\oplus}$ and
$ \tau_i/\overline{\tau}_i(N)\in\Delta(i)^{\oplus}.$  Furthermore, from $(**),$ Theorem \ref{filtraza} and the fact that
$\tau_j(M)=M$ for every $j\geq i_{0},$ we get that $M=\tau_{i_0}(L)\oplus \tau_{i_0}(N)$ and $\tau_{i_0}(L), \tau_{i_0}(N)\in \F_{f}(\Delta)$.
But $M=L\oplus N$ and thus $0=(L/\tau_{i_0}(L))\oplus (N/\tau_{i_0}(N)).$ Therefore $L=\tau_{i_0}(L)$ and $N=\tau_{i_0}(N),$ proving that $\F_{f}(\Delta)$
is closed under direct summands.
\end{dem}

\begin{cor}\label{C1filt-sum-direct}  Let $\R$ be a locally finite $\K$-ringoid, which is right support finite,  $\B:=\{\B_{i}\}_{i< \alpha}$ be
an admissible family of subcategories of $\R$ and $\Delta={}_\B\Delta$. Then  all the objects $\Delta_e(i)$ are local and the following statements hold true.
\begin{itemize}
\item[(a)] For any $M\in\F_f(\Delta),$ the filtration multiplicity $[M:\Delta_{e}(i)]$ does not depend on a given $\Delta$-filtration of $M.$
\item[(b)] $\F_{f}(\Delta)\subseteq\finp_\rho(\R)$ and it is  a locally finite $\K$-ringoid.
\end{itemize}
\end{cor}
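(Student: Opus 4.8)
The plan is to obtain this as an immediate consequence of Theorem \ref{filt-sum-direct}. That theorem already delivers, word for word, the two assertions (a) and (b) of the corollary (together with the local\-ity of all the $\Delta_e(i)$), under the single extra hypothesis $\Delta={}_\B\Delta\subseteq\finp_\rho(\R)$. So the only thing that actually needs to be done is to check that this containment is automatic once $\R$ is assumed to be right support finite; after that, the corollary is just a citation.

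For the verification, I would apply Proposition \ref{DSuppFin}(b) with the choice $\C=\R$. This is a legitimate choice of $\C$: since $\R$ is a locally finite $\K$-ringoid it is in particular Krull-Schmidt, hence thick, so $\add\,(\R)=\R$; and an admissible family $\B$ of subcategories of $\R$ in the sense of the corollary is precisely an admissible (indeed exhaustive) family in $\C=\R$, so that ${}_\B\Delta={}_{(\B,\R)}\Delta$. Proposition \ref{DSuppFin}(b) then gives, since $\R$ is locally finite and right support finite, that ${}_{(\B,\R)}\Delta^{\oplus}\subseteq\finp_\rho(\R)$; in particular $\Delta={}_\B\Delta\subseteq\finp_\rho(\R)$, which is exactly the hypothesis of Theorem \ref{filt-sum-direct}.

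With that in hand I would simply invoke Theorem \ref{filt-sum-direct}: it yields that every $\Delta_e(i)$ is local, that for $M\in\F_f(\Delta)$ the multiplicity $[M:\Delta_e(i)]$ is independent of the chosen $\Delta$-filtration of $M$, and that $\F_f(\Delta)\subseteq\finp_\rho(\R)$ is a locally finite $\K$-ringoid. These are literally statements (a) and (b) of the corollary, so the argument ends there. There is no real obstacle here; the only point requiring a moment of care is the bookkeeping identification $\C=\R$ above, so that Proposition \ref{DSuppFin} applies and produces the containment $\Delta\subseteq\finp_\rho(\R)$ feeding Theorem \ref{filt-sum-direct}.
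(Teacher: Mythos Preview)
Your proposal is correct and follows exactly the paper's own argument: the paper's proof reads ``It follows from Proposition \ref{DSuppFin} and Theorem \ref{filt-sum-direct},'' which is precisely your two-step plan of using Proposition \ref{DSuppFin}(b) (with $\C=\R$) to obtain $\Delta\subseteq\finp_\rho(\R)$ and then invoking Theorem \ref{filt-sum-direct}. Your extra care in justifying the choice $\C=\R$ is appropriate bookkeeping but adds nothing new.
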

\begin{proof} It follows from Proposition \ref{DSuppFin} and Theorem \ref{filt-sum-direct}.
\end{proof}

We recall that a class $\X$ of objects, in an abelian category $\A,$ is {\bf pre-resolving} if it is closed under extensions and for any exact sequence
$0\to A\to B\to C\to 0,$ with $B,C\in\X,$ it follows that $A\in\X.$ We prove that $\ltt{F}_{f}(\Delta)$ is a pre-resolving class, and in order to do that, we
start with the following lemma.

\begin{lem}\label{epicasouno}
Let $\R$ be a locally finite $\K$-ringoid,  $\B:=\{\B_{i}\}_{i< \alpha}$ be
an admissible family of subcategories of $\R,$ and $\Delta={}_\B\Delta\subseteq\finp_\rho(\R)$. Then, the following statements hold true.
\begin{enumerate}
\item [(a)]
Let $u:L\fl M$ be a monomorphism with $M\in \Delta(i)^{\oplus}$. For $e\in \sigma_{i}(\B)$, we have that
$\Hom(U_{e}(i),L)=0,$ where $U_{e}(i):=\mathrm{Tr}_{\oplus_{j<i}\overline{P}(j)}(P^{op}_{e}(i))$.

\item [(b)] Let $\xi:\,\,\xymatrix{0\ar[r] & L\ar[r]^{u} & M\ar[r]^{\pi} & N\ar[r] & 0}$ be an
exact sequence with $M,N\in \Delta(i)^{\oplus}$. Then $L\in \Delta(i)^{\oplus}$.
\end{enumerate}
\end{lem}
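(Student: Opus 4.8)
The plan is to establish (a) first, since (b) will follow from it together with the projectivity of $\overline{P}(j)$ and Proposition \ref{DSuppFin}. For (a), write $M=\bigoplus_{f\in J_i}\Delta_f(i)^{d_f}$ with $J_i\subseteq\sigma_i(\B)$ finite. Since $\Hom(U_e(i),L)\hookrightarrow\Hom(U_e(i),M)=\prod_{f\in J_i}\Hom(U_e(i),\Delta_f(i))^{d_f}$ via the monomorphism $u$, it suffices to show $\Hom(U_e(i),\Delta_f(i))=0$ for every $f\in\sigma_i(\B)$. Recall from Proposition \ref{DSuppFin} that $U_e(i)=I_{\cup_{j<i}\B_j}(-,e)$ and that this is generated by $\bigoplus_{j<i}\overline{P}(j)=\bigoplus_{a\in\cup_{j<i}\B_j}Y(a)$, i.e. there is an epimorphism $\bigoplus_{j<i}\overline{P}(j)^{(\Lambda)}\twoheadrightarrow U_e(i)$ for a suitable index set. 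Hence $\Hom(U_e(i),\Delta_f(i))\hookrightarrow\Hom(\bigoplus_{j<i}\overline{P}(j)^{(\Lambda)},\Delta_f(i))$, and by Yoneda this embeds into a product of copies of $\Delta_f(i)(a)$ over $a\in\cup_{j<i}\B_j$. But $\Delta_f(i)(a)=\R(a,f)/I_{\cup_{t<i}\B_t}(a,f)=0$ for such $a$, since $a\in\cup_{t<i}\B_t$ forces $1_a$ — hence any morphism $a\to f$ — to factor through $\cup_{t<i}\B_t$. Therefore $\Hom(U_e(i),\Delta_f(i))=0$, and (a) follows. This is essentially the same computation already used in the proof of Lemma \ref{claveD}(b),(c), so it should be routine.

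For (b), consider $e\in\sigma_i(\B)$ and the exact sequence $0\to U_e(i)\xrightarrow{\gamma_e(i)} P^{op}_e(i)\xrightarrow{\delta_e(i)}\Delta_e(i)\to 0$. Since $P^{op}_e(i)=Y(e)$ is projective and $N\in\Delta(i)^\oplus$, I would first lift: given the surjection $\pi:M\to N$ (the map in $\xi$; note there is a typo in the statement, $\pi$ should label $M\to N$), for each summand $\Delta_e(i)$ of $N$ with multiplicity, assemble a map $P:=\bigoplus_{e}P^{op}_e(i)^{(\text{mult})}\to M$ lifting the composite $P\twoheadrightarrow N$. The idea is to show this lift restricted appropriately gives, together with $u:L\to M$, an isomorphism-type decomposition of $M$. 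More precisely, I expect the cleanest route is: pick a projective presentation $\bigoplus_e U_e(i)^{(\cdot)}\to\bigoplus_e P^{op}_e(i)^{(\cdot)}\to N\to 0$ assembled from the standard presentations of the summands of $N$, lift the middle term to $M$, and splice with $\xi$ to get a commutative diagram with $L$ as the pullback-type kernel; then identify $L$ with the cokernel of $\bigoplus_e U_e(i)^{(\cdot)}\to \bigoplus_e U_e(i)^{(\cdot)}\oplus(\text{presentation of }M)$, which lands back in $\Delta(i)^\oplus$.

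Alternatively — and this may be simpler — I would argue by induction on $[N:\Delta(i)]$ (the number of indecomposable summands of $N$), reducing to the case $N=\Delta_e(i)$ for a single $e\in\sigma_i(\B)$. In that case lift $M\xrightarrow{} \Delta_e(i)$ along $\delta_e(i):P^{op}_e(i)\to\Delta_e(i)$, obtaining $h:P^{op}_e(i)\to M$ with $\pi h=\delta_e(i)$. Since both $M$ and $N=\Delta_e(i)$ lie in $\finp_\rho(\R)$, which is a locally finite (hence Krull–Schmidt) category by Proposition \ref{LARp3}, and since $\delta_e(i)$ is a projective cover of $\Delta_e(i)$ (as $P^{op}_e(i)=Y(e)$ with $e$ local, cf. the proof of Lemma \ref{claveD}(a)), the lift $h$ has image a direct summand of $M$ mapping isomorphically onto a summand of $N$; complementing, $M\simeq L'\oplus P^{op}_e(i)$ with $L'$ fitting in $0\to L'\to ?$ — and pulling back $\gamma_e(i):U_e(i)\hookrightarrow P^{op}_e(i)$ shows $L\simeq L'\oplus U_e(i)$ fails unless $M\in\Delta(i)^\oplus$ already forces $h$ to be split mono onto a $\Delta$-summand. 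The main obstacle is precisely controlling how $h$ interacts with the decomposition of $M$ into copies of $\Delta_f(i)$: I expect one needs part (a) here, namely $\Hom(U_e(i),L)=0$, to conclude that the pulled-back copy of $U_e(i)$ inside $M$ actually lies in the image of $u$, forcing $L\in\Delta(i)^\oplus$ by a diagram chase and Krull–Schmidt cancellation in $\finp_\rho(\R)$. That interplay — getting $h$ to respect the $\Delta(i)^\oplus$-structure via the vanishing in (a) — is where I would concentrate the care; the rest is formal homological bookkeeping.
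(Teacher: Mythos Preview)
Your argument for (a) is correct and essentially the paper's: both reduce to $\Delta_f(i)(a)=0$ for $a\in\cup_{j<i}\B_j$ via the trace-epimorphism from $\bigoplus_{j<i}\overline{P}(j)$ onto $U_e(i)$. The paper first shows $\Hom(\overline{P}(j),L)=0$ and then composes with that epimorphism, while you first embed $\Hom(U_e(i),L)$ into $\Hom(U_e(i),M)$; the content is identical.

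For (b), you are working much harder than necessary, and one of your intermediate claims is wrong. The assertion that the lift $h:P_e^{op}(i)\to M$ has image a direct summand of $M$, or that $M\simeq L'\oplus P_e^{op}(i)$, is unjustified and generally false: $P_e^{op}(i)$ is not in $\Delta(i)^{\oplus}$ when $U_e(i)\neq 0$, so it cannot split off $M$. What you actually need from your lift is simpler and is hiding in your last paragraph. Since $\pi\circ h|_{U_e(i)}=\delta_e(i)\gamma_e(i)=0$, the restriction $h|_{U_e(i)}$ factors through $u:L\hookrightarrow M$; by (a) this factorization is zero, so $h$ descends to $\overline{h}:\Delta_e(i)\to M$ with $\pi\overline{h}=1_{\Delta_e(i)}$. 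Doing this for each summand of $N$ gives a section of $\pi$, hence $\xi$ splits, $M\simeq L\oplus N$, and Krull--Schmidt in $\finp_\rho(\R)$ (Proposition \ref{LARp3}) yields $L\in\Delta(i)^{\oplus}$. No induction, no diagram splicing.

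The paper packages the same step via $\Ext^1$: applying $\Hom(-,L)$ to $0\to U_e(i)\to P_e^{op}(i)\to\Delta_e(i)\to 0$ gives the exact fragment
\[
\Hom(U_e(i),L)\longrightarrow\Ext^1(\Delta_e(i),L)\longrightarrow\Ext^1(P_e^{op}(i),L);
\]
the outer terms vanish by (a) and projectivity, so $\Ext^1(\Delta_e(i),L)=0$ for every $e\in\sigma_i(\B)$, whence $\Ext^1(N,L)=0$ and $\xi$ splits. This is the clean route your proposal was circling.
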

\begin{proof}
(a)  First, we show that $\Hom(P_{e'}^{op}(j),L)=0$ for $j<i$ and $e'\in \sigma_{j}(\B)$.\\
Indeed, we have that $M=\bigoplus_{e\in J_{i}}\Delta_{e}(i)^{\mu_{e}}$ with $J_{i}\subseteq \sigma_{i}(\B)$
a finite subset. Then $$\Hom(P_{e'}^{op}(j),M)\simeq \bigoplus_{e\in J_{i}} \Hom(P_{e'}^{op}(j),\Delta_{e}(i))^{\mu_{e}}=0$$
since $\Hom(P_{e'}^{op}(j),\Delta_{e}(i))=0$ for $j<i$ and for every $e'\in \sigma_{j}(\B)$.
Now, let $\alpha: P_{e'}^{op}(j)\fl L$ be a morphism. Then  $u\alpha\in \Hom(P_{e'}^{op}(j),M)=0.$
Since $u$ is a monomorphism, we have that $\alpha=0$. This proves that $\Hom(P_{e'}^{op}(j),L)=0$ for
$j<i$ and $e'\in \sigma_{j}(\B)$.\\
Therefore,  for $j<i,$ it follows that
$$\Hom(\overline{P}(j),L)\simeq \prod_{e'\in \sigma_{j}(\B)}\Hom(P_{e'}^{op}(j),L)=0$$
since $\overline{P}(j):=\displaystyle \bigoplus_{e'\in\sigma_j(\B)}\,P_{e'}^{op}(j)$. Consider $X:=\mathrm{Hom}\Big(\bigoplus_{j<i}\overline{P}(j),U_{e}(i)\Big).$ From the equality $U_{e}(i)=\mathrm{Tr}_{\oplus_{j<i}\overline{P}(j)}(P_{e}(i)),$ there exists an epimorphism
$$\lambda:(\bigoplus_{j<i}\overline{P}(j))^{(X)}\fl U_{e}(i).$$ 
 Let $\gamma\in \Hom(U_{e}(i),L).$ Then  $\gamma\lambda\in \Hom \Big(\big(\bigoplus_{j<i}\overline{P}(j)\big)^{(X)},L\Big).$
But
$$\Hom \Big(\big(\bigoplus_{j<i}\overline{P}(j)\big)^{(X)},L\Big)
\simeq \prod_{j<i}\prod_{x\in X}\Hom(\overline{P}(j),L)=0.$$
Hence $\gamma\lambda=0$ and thus $\gamma=0$, since $\lambda$ is an
epimorphism, proving that  $\Hom(U_{e}(i),L)=0$.
\

(b) Since $N\in \Delta(i)^{\oplus},$ we have that $N=\bigoplus_{e\in K_{i}}\Delta_{e}(i)^{\nu_{e}}$ with $K_{i}\subseteq \sigma_{i}(\B)$
a finite subset. For each $e\in K_{i},$ there is an exact sequence
$$\xymatrix{0\ar[r] & U_{e}(i)\ar[r] &
P_{e}(i)\ar[r] & \Delta_{e}(i)\ar[r] & 0.}$$
By applying $\Hom(-,L)$ to the above sequence,  we obtain the exact sequence
$$\xymatrix{\Hom(U_{e}(i),L)\ar[r] & \Ex^{1} (\Delta_{e}(i),L)\ar[r] & \Ex^{1}(P_{e}(i),L).}$$
Since $\Hom(U_{e}(i),L)=0$ by (a), and $\Ex^{1}(P_{e}(i),L)=0,$ it follows
that $\Ex^{1}(\Delta_{e}(i),L)=0$ for each $e\in K_{i}$. Then
$$\Ex^{1}(N,L)=\prod_{e\in K_{i}}\Ex^{1}(\Delta_{e}(i),L)^{\nu_{e}}=0.$$
We conclude that $\xi$ splits and thus  $L\oplus N=M\in \Delta(i)^{\oplus}.$ Finally, from the fact that
 $\finp_\rho(\R)$ is a Krull-Schmidt category, we get that $L\in \Delta(i)^{\oplus}.$
\end{proof}

\begin{pro}\label{ResClass}
Let $\R$ be a locally finite $\K$-ringoid,  $\B:=\{\B_{i}\}_{i< \alpha}$ be 
an admissible family of subcategories of $\R,$ and let $\Delta={}_\B\Delta\subseteq\finp_\rho(\R)$. Then $\ltt{F}_{f}(\Delta)$ is a pre-resolving class.
\end{pro}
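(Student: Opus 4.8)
The plan is to verify the two defining properties of a pre-resolving class for $\ltt{F}_f(\Delta)$: closure under extensions, and closure under taking the kernel in an exact sequence $0\to A\to B\to C\to 0$ with $B,C\in\ltt{F}_f(\Delta)$. The first property is immediate: it is exactly Remark \ref{filclodesext}, which holds for $\ltt{F}_f(\X)$ for any class $\X$ in any abelian category. So the real content is the kernel-closure statement, and the natural tool to reach for is the trace-filtration characterization of Theorem \ref{filtraza}, together with the additive pre-radicals $\tau_i,\overline{\tau_i}$ and their formal properties from Lemma \ref{SubfTr2}.

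First I would reduce to the following: given a short exact sequence $0\to A\xrightarrow{f} B\xrightarrow{g} C\to 0$ with $B,C\in\ltt{F}_f(\Delta)$, I want to produce the data of Theorem \ref{filtraza}(b) for $A$. Applying the left-exact additive pre-radicals $\tau_i$ and $\overline{\tau_i}$ to this sequence gives, for each $i<\alpha$, a commutative diagram with exact rows $0\to\tau_i(A)\to\tau_i(B)\to\tau_i(C)$ and $0\to\overline{\tau_i}(A)\to\overline{\tau_i}(B)\to\overline{\tau_i}(C)$, hence (by the snake lemma / $3\times 3$ lemma style argument, exactly as in the diagram $(**)$ of the proof of Theorem \ref{filt-sum-direct}) an exact sequence of the $i$-th $\tau$-sections
\[
0\to \tau_i/\overline{\tau_i}(A)\to \tau_i/\overline{\tau_i}(B)\to \tau_i/\overline{\tau_i}(C).
\]
Since $B,C\in\ltt{F}_f(\Delta)$, Theorem \ref{filtraza} tells us $\tau_i/\overline{\tau_i}(B)\in\Delta(i)^\oplus$ for all $i$, the supports $\Supp(\tau_{\B,B})$ and $\Supp(\tau_{\B,C})$ are finite, and there is $i_0$ with $\tau_j(B)=B$, $\tau_j(C)=C$ for $j\geq i_0$. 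From the last fact and the snake sequence $\tau_j(A)\to\tau_j(B)\to\tau_j(C)$ together with the inclusion $\tau_j(A)\subseteq A$ and a diagram chase, one gets $\tau_j(A)=A$ for $j\geq i_0$; and $\Supp(\tau_{\B,A})\subseteq\Supp(\tau_{\B,B})$ is finite because $\tau_i/\overline{\tau_i}(A)$ embeds in $\tau_i/\overline{\tau_i}(B)$.

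The crucial remaining point — and the step I expect to be the main obstacle — is to show $\tau_i/\overline{\tau_i}(A)\in\Delta(i)^\oplus$ for every $i$. We know it is a subobject of $\tau_i/\overline{\tau_i}(B)\in\Delta(i)^\oplus$, so this is asking that $\ltt{F}_f(\Delta)$-sections behave well under submodules \emph{within a fixed} $\Delta(i)^\oplus$. Here is exactly where Lemma \ref{epicasouno} enters. From the snake sequence the cokernel of $\tau_i/\overline{\tau_i}(A)\hookrightarrow\tau_i/\overline{\tau_i}(B)$ injects into $\tau_i/\overline{\tau_i}(C)\in\Delta(i)^\oplus$; chasing the $3\times 3$ diagram (as in Theorem \ref{filt-sum-direct}), since $\ltt{F}_f(\Delta)$-membership of $B$ and $C$ forces all three corners into $\Delta(i)^\oplus$-world, I would argue that we are in the situation of an exact sequence $0\to L\to M\to N\to 0$ with $M,N\in\Delta(i)^\oplus$, whence Lemma \ref{epicasouno}(b) gives $L:=\tau_i/\overline{\tau_i}(A)\in\Delta(i)^\oplus$. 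The care needed is that $\tau_i/\overline{\tau_i}(C)$ is not literally the cokernel of the map on $A$-sections; I need the full commutative diagram relating $\tau_i/\overline{\tau_i}$ of $A$, $B$, $C$ together with the identification (using $\finp_\rho(\R)$ Krull--Schmidt, since $\Delta\subseteq\finp_\rho(\R)$) that a subobject with quotient in $\Delta(i)^\oplus$ sitting inside an object of $\Delta(i)^\oplus$ is again in $\Delta(i)^\oplus$. Once all sections of $A$ are in $\Delta(i)^\oplus$ and its trace support is finite with $\tau_j(A)=A$ eventually, Theorem \ref{filtraza}(b)$\Rightarrow$(a) yields $A\in\ltt{F}_f(\Delta)$, completing the proof.
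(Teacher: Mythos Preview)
Your overall strategy---use the $\B$-trace filtration together with Lemma \ref{epicasouno}(b)---is exactly the paper's. But there is a genuine gap in how you execute it. You work throughout with $\tau_i(A)$ and $\overline{\tau_i}(A)$ and aim to verify Theorem \ref{filtraza}(b) for $A$. The problem is that the pre-radical $\tau_i$ applied to the short exact sequence does \emph{not} yield an exact sequence $0\to\tau_i(A)\to\tau_i(B)\to\tau_i(C)$: the kernel of $\tau_i(g)\colon\tau_i(B)\to\tau_i(C)$ is $A\cap\tau_i(B)$, which in general strictly contains $\tau_i(A)$. (For instance, with $\Lambda$ the path algebra of $1\to 2$, $Q=P(2)$, $B=P(2)$, $A=\mathrm{soc}(P(2))=S(1)$, one has $\tau_Q(A)=0$ while $A\cap\tau_Q(B)=A$.) Consequently your claimed snake sequence on sections, the inclusion $\mathrm{Supp}(\tau_{\B,A})\subseteq\mathrm{Supp}(\tau_{\B,B})$, and the assertion $\tau_j(A)=A$ for $j\geq i_0$ are all unjustified.

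The paper fixes this by never computing $\tau_i(A)$. Instead it uses the crucial fact that $\bigoplus_{j\leq i}\overline{P}(j)$ is projective and $g$ is epi, so $\tau_i(g)$ and $\overline{\tau_i}(g)$ are \emph{surjective}. Setting $L_i:=\mathrm{Ker}(\tau_i(g))=A\cap\tau_i(B)$ and $\overline{L}_i:=\mathrm{Ker}(\overline{\tau_i}(g))$, one obtains genuine short exact sequences, and the snake lemma gives a short exact sequence
\[
0\to L_i/\overline{L}_i\to \tau_i/\overline{\tau_i}(B)\to \tau_i/\overline{\tau_i}(C)\to 0,
\]
to which Lemma \ref{epicasouno}(b) applies directly (no need to worry about cokernels merely injecting). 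One then reads off a finite $\Delta$-filtration of $A$ from the chain $\{L_{i_k}\}$ indexed by $\mathrm{Supp}(\tau_{\B,B})$, with top term $L_{i_a}=\mathrm{Ker}(g)=A$; Theorem \ref{filtraza}(b)$\Rightarrow$(a) is not invoked for $A$. Your hedge at the end (``$\tau_i/\overline{\tau_i}(C)$ is not literally the cokernel'') locates the difficulty on the wrong side of the diagram: the real issue is on the kernel side, and surjectivity of $\tau_i(g)$ is what resolves both.
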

\begin{proof} By Remark \ref{filclodesext}, we know that $\ltt{F}_{f}(\Delta)$ is closed under extensions. It remains to show that  $\ltt{F}_{f}(\Delta)$  is closed under kernels of epimorphisms between its objects.
\

Let $\xi:\,\,\xymatrix{0\ar[r] & L\ar[r]^{u} & M\ar[r]^{\pi} & N\ar[r] & 0}$ be an
exact sequence with $M,N\in \ltt{F}_{f}(\Delta)$. Let $\{\tau_{i}(M)\}_{i<\alpha}$ and
$\{\tau_{i}(N)\}_{i<\alpha}$ be the $\B$-trace filtrations
of $M$ and $N,$ respectively. By Theorem \ref{filtraza}, we have that $\Supp(\tau_{\B,M})$ is finite, that is,
$\Supp(\tau_{\B,M})=\{i_1<i_2<\cdots <i_a\}.$  Then $\tau_{j}(M)=M$  for every $j\geq i_{a}$.
Since $\pi$ is an epimorphism, we have that $\tau_{j}(N)=N$ for every $j\geq i_{a}$. Moreover, by using that $\pi$
is an epimorphism and the fact that $\bigoplus_{j\leq i}\overline{P}(j)$ and  $\bigoplus_{j<i}\overline{P}(j)$ are projectives, we conclude that
$\pi(\epsilon_{i}(M))=\epsilon_{i}(N)$ for every $i<\alpha$, where $\epsilon_{i}=\tau_{i}$ or $\epsilon_{i}=\overline{\tau}_{i}$
(that is $\epsilon_{i}(\pi):=\pi|_{\epsilon_{i}(M)}:\epsilon_{i}(M)\fl \epsilon_{i}(N)$ is an epimorphism).
Let $\overline{L}_{i}:=\mathrm{Ker}(\overline{\tau}_{i}(\pi))$ and $L_{i}:=\mathrm{Ker}(\tau_{i}(\pi)).$ Then, for
each $i<\alpha,$ we obtain the following commutative and exact diagram
$$\xymatrix{0\ar[r] & \overline{L}_{i}\ar[r]\ar[d]_{u_{i}} &
\overline{\tau}_{i}(M)\ar[r]^{\overline{\tau}_{i}(\pi)}\ar[d]_{v_{i}} & \overline{\tau}_{i}(N)\ar[r]\ar[d]_{w_{i}} & 0\\
0\ar[r] & L_{i}\ar[r] & \tau_{i}(M)\ar[r]^{\tau_{i}(\pi)} & \tau_{i}(N)\ar[r] & 0,}$$
where $u_{i},v_{i}$ and $w_{i}$ are monomorphisms.
By the Snake's Lemma, there exists the following exact sequence
$$\xymatrix{0\ar[r] & \frac{L_{i}}{\overline{L}_{i}}\ar[r] & \frac{\tau_{i}(M)}{\overline{\tau}_{i}(M)}
\ar[r] & \frac{\tau_{i}(N)}{\overline{\tau}_{i}(N)}\ar[r] & 0.}$$
Since $M,N\in \ltt{F}_{f}(\Delta)$, by Theorem \ref{filtraza}, we obtain that
$\frac{\tau_{i}(M)}{\overline{\tau}_{i}(M)}, \frac{\tau_{i}(N)}{\overline{\tau}_{i}(N)} \in \Delta(i)^{\oplus}$.
By Lemma \ref{epicasouno}, it follows that $\frac{L_{i}}{\overline{L}_{i}}\in \Delta(i)^{\oplus}$.\\
Recall that $\Supp(\tau_{\B,M})=\{i_1<i_2<\cdots <i_a\}.$ Hence,  by Remark \ref{Rkfiltraza}, the following statements hold true
\begin{itemize}
\item[(a)] $\tau_j(M)=0$ for every $j\in[0,i_1),$
\item[(b)] $\tau_j(M)=M_k:=\tau_{i_k}(M)=\overline{\tau}_{i_{k+1}}(M)$ for every $j\in[i_k,i_{k+1})$ and $k\in[1,a-1),$
\item[(c)] the finite chain of submodules $0\subseteq M_0\subseteq M_1\subseteq \cdots\subseteq M_a=M$ satisfies  that
$M_t/M_{t-1}= \tau_{i_{t}}/\tau_{i_{t-1}}(M)=\tau_{i_t}/\overline{\tau_{i_t}} (M).$
\end{itemize}
For $i=i_{a},$ we have that $\tau_{i}(M)=M$ and hence $\tau_{i}(\pi)=\pi.$ Therefore 
 $L_{i_{a}}=L$. We set $L_{k}:=L_{i_{k}}$ for $k\in [1,a].$  Hence we have the following filtration
$$0=L_{0}\subseteq L_{1}\subseteq L_{2}\subseteq \dots \subseteq L_{a-1}\subseteq L_{a}=L.$$
By the item (b)  $\tau_{i_k}(M)=\overline{\tau}_{i_{k+1}}(M)$ for  $k\in[1,a-1),$ and so
$\tau_{i_{k}}(\pi)=\overline{\tau}_{i_{k+1}}(\pi)$. Therefore, we conclude that
$L_{i_{k}}=\overline{L}_{i_{k+1}}$ for every $k\in[1,a-1)$.
Then
$\frac{L_{k}}{L_{k-1}}=\frac{L_{i_{k}}}{L_{i_{k-1}}}=
\frac{L_{i_{k}}}{\overline{L_{i_{k}}}}\in \Delta(i_{k})^{\oplus}$ for $k\in [1,a]$.
This give us a finite filtration of $L$ proving that $L\in \ltt{F}_{f}(\Delta)$.
\end{proof}

\section{Stratifying ideals in ringoids}

In this section we introduce and study the notion of ideally standardly stratified ringoid. We prove that standardly stratified ringoids and ideally standardly stratified ringois are only equivalent notions under a specific condition. It is also shown that certain equivalent characterizations of standardly stratified algebras and quasi-hereditary algebras are not necessarily equivalent anymore in the realm of ringoids.

\begin{defi} Let $\R$ be a ringoid. An ideal $I\unlhd\R$ is {\bf right stratifying} if $I^2=I$  and
 $I(-,a)\in\proj_\rho(\R)$ for any $a\in\R.$ We say that $I$ is {\bf right hereditary} if it is right stratifying and $I\rad_\R(-,?)I=0.$
 A right stratifying (respectively,  hereditary) chain in $\R$ is a chain $\{I_i\}_{i<\alpha}$ of ideals of $\R$ such that
 $\sum_{i<\alpha}\,I_i= \R$ and  $I_i/I'_i$ is right stratifying (respectively, hereditary) in $\R/I'_i,$ where $I'_i:=\sum_{j<i}\,I_j.$
\end{defi}

\begin{lem}\label{AuxB} Let $\R$ be a Krull-Schmidt $\K$-ringoid, and let $\B:=\{\B_{i}\}_{i<\alpha}$ be an exhaustive family of subcategories
of $\R.$ Then, the following statements hold true.
\begin{itemize}
\item[(a)] $\sum_{j<i}\,I_{\B_j}=I_{\bigcup_{j<i}\B_j},$ where $I_\emptyset(a,b):=\{0\}$ for $a,b\in\R.$
\item[(b)] $\sum_{j<\alpha}\,I_{\B_j}=\R.$
\end{itemize}
\end{lem}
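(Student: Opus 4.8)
The plan is to reduce everything to two elementary facts about the ideals $I_{\mathcal{D}}$ associated with additive full subcategories $\mathcal{D}$ of $\R$: that $I_{\mathcal{D}}(a,b)$ is exactly the set of morphisms $a\to b$ that factor through some object of $\mathcal{D}$, and that $\mathcal{D}\subseteq\mathcal{D}'$ forces $I_{\mathcal{D}}\subseteq I_{\mathcal{D}'}$.

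For part (a) I would first check that $\bigcup_{j<i}\B_j$ is again an additive full subcategory of $\R$. This holds because, by Definition \ref{AS}(b), the $\B_j$ form an increasing chain of subcategories, each additive since $\add(\B_j)=\B_j$, and the union of a chain of additive subcategories is closed under finite direct sums, hence additive. Consequently $I_{\bigcup_{j<i}\B_j}$ is a genuine ideal of $\R$. Next, since an object of $\bigcup_{j<i}\B_j$ is precisely an object lying in a single $\B_{j_0}$ with $j_0<i$, a morphism factors through an object of $\bigcup_{j<i}\B_j$ if and only if it factors through an object of $\B_{j_0}$ for some $j_0<i$; this yields, for all $a,b\in\R$,
\[ I_{\bigcup_{j<i}\B_j}(a,b)=\bigcup_{j<i}I_{\B_j}(a,b). \]
Finally, since $\B_j\subseteq\B_{j'}$ implies $I_{\B_j}(a,b)\subseteq I_{\B_{j'}}(a,b)$, the family $\{I_{\B_j}(a,b)\}_{j<i}$ is a chain of $\K$-submodules of $\Hom_\R(a,b)$, so its union coincides with its sum $\sum_{j<i}I_{\B_j}(a,b)$; for $i=0$ both sides equal $\{0\}$, matching the convention $I_\emptyset(a,b)=\{0\}$. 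Ranging over all $a,b$ gives $\sum_{j<i}I_{\B_j}=I_{\bigcup_{j<i}\B_j}$.

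For part (b) I would run the same computation with $\alpha$ in place of $i$ to get $\sum_{j<\alpha}I_{\B_j}=I_{\bigcup_{j<\alpha}\B_j}$, and then invoke that $\B$ is exhaustive, i.e. $\bigcup_{j<\alpha}\B_j=\R$ by Definition \ref{AS}(c) with $\C=\R$. It then suffices to see $I_\R=\R$, which is immediate: for each $a\in\R$ the identity $1_a$ factors through the object $a$ of $\R$, so $1_a\in I_\R(a,a)$, and since $I_\R$ is an ideal every $f\colon a\to b$ satisfies $f=f\circ 1_a\in I_\R(a,b)$. Hence $\sum_{j<\alpha}I_{\B_j}=\R$. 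I do not anticipate any genuine obstacle; the only points deserving a word of care are the additivity of $\bigcup_{j<i}\B_j$ (needed so that $I_{\bigcup_{j<i}\B_j}$ is an ideal), the identification of a union of a chain of submodules with their sum, and the degenerate case $i=0$, which is exactly what the stated convention $I_\emptyset=\{0\}$ takes care of.
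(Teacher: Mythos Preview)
Your proof is correct and follows essentially the same route as the paper's. The only organizational difference is that for the inclusion $\sum_{j<i}I_{\B_j}\subseteq I_{\bigcup_{j<i}\B_j}$ the paper writes a general element as a finite sum $f=\sum_{k=1}^n f_k$ with $f_k\in I_{\B_{j_k}}$ and then factors $f$ through the direct sum $\bigoplus_k b_{j_k}\in\B_{\max\{j_1,\dots,j_n\}}$, whereas you bypass this construction by observing that $\{I_{\B_j}(a,b)\}_{j<i}$ is a chain of $\K$-submodules, so its union already equals its sum; both arguments ultimately rest on the same chain property $\B_j\subseteq\B_{j'}$ for $j\le j'$.
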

\begin{dem} (a) Let $f\in I_{\bigcup_{j<i}\B_j}(x,y).$ Then, $f$ factorizes through some $b\in\bigcup_{j<i}\B_j.$ Therefore $f\in I_{\B_j}(x,y)$ for some
$j<i,$ and thus $f\in\sum_{j<i}\,I_{\B_j}(x,y),$ proving that $I_{\bigcup_{j<i}\B_j}\subseteq\sum_{j<i}\,I_{\B_j}.$
\

Let $f\in \sum_{j<i}\,I_{\B_j}(x,y).$ Then, $f=\sum_{k=1}^n\,f_k$ for some $f_k\in I_{\B_{j_k}}(x,y)$ with $j_k<i.$ In particular, each $f_k$ is the
composition of morphisms $x\xrightarrow{t_k}b_{j_k}\xrightarrow{h_k}y,$ where $b_{j_k}\in\B_{j_k}.$ Let $b:=\bigoplus_{k=1}^n\,b_{j_k}.$ Then, we have
the matrix morphisms $x\xrightarrow{t}b\xrightarrow{h}x$ such that $f=ht.$ Since $b\in\B_j$ for $j:=\max\{j_1,j_2,\cdots,j_n\}<i,$ it follows that
$f\in I_{\bigcup_{j<i}\B_j}(x,y),$ proving that $\sum_{j<i}\,I_{\B_j}\subseteq I_{\bigcup_{j<i}\B_j}.$
\

(b) It follows from (a), since $\bigcup_{j<\alpha}\B_j=\R.$
\end{dem}

\begin{defi}\label{idealystandarquasi}
Let $\R$ be a Krull-Schmidt $\K$-ringoid. We say that $\R$ is a  right {\bf ideally} standardly stratified (respectively, quasi-hereditary) $\K$-ringoid, with respect to an exhaustive family $\B:=\{\B_{i}\}_{i< \alpha}$ of subcategories of $\R,$ if the associated chain $\{I_{\B_i}\}_{i<\alpha}$ of ideals of $\R$
 is right stratifying (respectively, hereditary).
 \end{defi}

\begin{rk} A right ideally  quasi-hereditary $\K$-ringoid $\R,$ with respect to an exhaustive family of subcategories
$\B=\{B_i\}_{i<\alpha}$ of $\R$ such that $\alpha\leq\aleph_0,$ is called quasi-hereditary category in \cite{Martin}.
\end{rk}

\begin{lem}\label{Delta1} Let $\R$ be a locally finite $\K$-ringoid, and let $\B=\{\B_{i}\}_{i< \alpha}$ be an  exhaustive family  of subcategories of
$\R$   such that
$$(I_{\B_j}/I'_{\B_j})\;\rad_{\R/I'_{\B_j}}(-,?)\;(I_{\B_j}/I'_{\B_j})=0,$$
for any $j<\alpha.$ Then, the following
statements hold true.
\begin{itemize}
\item[(a)] $\rad_\R(e,e')=I'_{\B_i}(e,e')$ for any $e,e'\in\sigma_i(\B)$ and $i<\alpha.$
\item[(b)] $\Hom(\Delta_e(i),\Delta_{e'}(i))=0$ for any $e\neq e'$ in $\sigma_i(\B)$ and $i<\alpha.$
\item[(c)] $\End\,(\Delta_{e}(i))\simeq \frac{\mathrm{End}_{\R}(e)}{\mathrm{rad}(\mathrm{End}_{\R}(e))}$ for any $e\in\sigma_i(\B)$ and $i<\alpha.$
\end{itemize}
\end{lem}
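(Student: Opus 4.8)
The plan is to prove the three statements in order, deducing (b) and (c) from (a) together with the machinery of Lemmas \ref{claveD1} and \ref{claveD2}. The key input is that the hypothesis $I_{\B_j}/I'_{\B_j}\,\rad_{\R/I'_{\B_j}}(-,?)\,I_{\B_j}/I'_{\B_j}=0$ is precisely a ``heredity'' condition on each layer, and it should force the trace ideals $I_{\cup_{t<i}\B_t}(e,e')$ to coincide with the radical $\rad_\R(e,e')$ when $e,e'$ lie in the $i$-th section. Throughout, recall from Proposition \ref{DSuppFin}(a) that for a locally finite $\R$ one has $\Delta_e(i)=Y(e)/I_{\cup_{t<i}\B_t}(-,e)$ and $\Delta_e(i)\neq 0$, and that $I'_{\B_i}=\sum_{j<i}I_{\B_j}=I_{\cup_{j<i}\B_j}$ by Lemma \ref{AuxB}(a); so the object appearing in (a), namely $I'_{\B_i}(e,e')$, is exactly the trace submodule defining $\Delta_{e'}(i)(e)$ up to the identification $Y(e')(e)=\R(e,e')$.

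First I would prove (a). One inclusion, $I'_{\B_i}(e,e')\subseteq\rad_\R(e,e')$, is easy and essentially already appears in the proof of Lemma \ref{claveD1}: any $f\in I'_{\B_i}(e,e')$ factors through an object $b\in\bigcup_{t<i}\B_t$, hence through an object not isomorphic to $e$ or $e'$ (since $e,e'\in\sigma_i(\B)$ are not summands of any such $b$), so $f$ cannot be a split mono or split epi and therefore lies in the radical. For the reverse inclusion $\rad_\R(e,e')\subseteq I'_{\B_i}(e,e')$, the natural route is to work in the quotient ringoid $\overline{\R}:=\R/I'_{\B_i}$ and show that the image of $\rad_\R(e,e')$ in $\overline{\R}(e,e')$ vanishes. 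Here the hypothesis enters: in $\overline{\R}$, the ideal $\overline{I}:=I_{\B_i}/I'_{\B_i}$ satisfies $\overline{I}\,\rad_{\overline{\R}}(-,?)\,\overline{I}=0$, and $e,e'\in\sigma_i(\B)$ means $1_e,1_{e'}\in\overline{I}$ (the identities at $e,e'$ factor through objects of $\B_i$ but not through $\bigcup_{t<i}\B_t$). Then for $\bar f\in\rad_{\overline{\R}}(e,e')$ we get $\bar f=1_{e'}\,\bar f\,1_e\in\overline{I}\,\rad_{\overline{\R}}(-,?)\,\overline{I}=0$, so $\bar f=0$, i.e. $f\in I'_{\B_i}(e,e')$. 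This establishes (a). The one point requiring care is the claim $1_e\in\overline{I}(e,e)$: one must check that $1_e\in I_{\B_i}(e,e)$ (clear, since $e\in\B_i$ because $\sigma_i(\B)\subseteq\B_i$) and that $I_{\B_i}/I'_{\B_i}$ is genuinely an ideal of $\R/I'_{\B_i}$, which holds because $I'_{\B_i}\subseteq I_{\B_i}$ by Lemma \ref{AuxB}(a) and the family is nested.

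Next, (b) follows from (a) via Lemma \ref{claveD2}: that lemma says $\Hom(\Delta_e(i),\Delta_{e'}(i))=0$ for $e\neq e'$ in $\sigma_i(\B)$ if and only if $I_{\cup_{t<i}\B_t}(e,e')=\rad_\R(e,e')$, which is exactly statement (a) for $e\neq e'$ (using $I'_{\B_i}=I_{\cup_{j<i}\B_j}=I_{\cup_{t<i}\B_t}$). Likewise (c) follows from (a) via Lemma \ref{claveD1}: condition (b) of that lemma, $I_{\cup_{t<i}\B_t}(e,e)=\rad_\R(e,e)$, is statement (a) in the case $e=e'$, and $\rad_\R(e,e)=\rad\,\End_\R(e)$ since $\End_\R(e)$ is local; Lemma \ref{claveD1} then gives $\End(\Delta_e(i))\simeq\End_\R(e)/\rad\,\End_\R(e)$, which is (c). I expect the main obstacle to be the careful bookkeeping in the reverse inclusion of (a): correctly identifying $\sigma_i(\B)$-membership with the statement that $1_e$ survives to a nonzero (indeed, ``full'') idempotent generator in the correct subquotient, and justifying that the radical of $\overline{\R}$ restricted to $\Hom(e,e')$ is the image of $\rad_\R(e,e')$ — this uses that passing to a quotient ringoid by an ideal contained in the radical does not shrink the radical in an unexpected way, together with local-ness of the $\End_\R(e)$. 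Everything else is a direct appeal to the already-proven lemmas.
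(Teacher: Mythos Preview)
Your proposal is correct and follows essentially the same route as the paper: establish (a) first, then deduce (b) and (c) via Lemmas \ref{claveD1} and \ref{claveD2}. The paper's own proof of (a) merely cites \cite[Theorem 3.6 (i)]{Martin}, whereas you spell out the argument directly (sandwich $\bar f\in\rad_{\overline{\R}}(e,e')$ between the images of $1_{e'}$ and $1_e$ in $\overline{I}=I_{\B_i}/I'_{\B_i}$ and invoke the heredity hypothesis); your flagged subtlety about $\rad_\R(e,e')$ mapping into $\rad_{\overline{\R}}(e,e')$ is handled exactly as you indicate, using the easy inclusion $I'_{\B_i}(e,e)\subseteq\rad\End_\R(e)$ together with locality of $\End_\R(e)$.
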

\begin{dem} Let $e, e'$ in $\sigma_i(\B).$ Since $I'_{\B_i}:=\sum_{j<i}\,I_{\B_j}=I_{\bigcup_{j<i}\B_j},$ we can adapt some part of the proof given in
\cite[Theorem 3.6 (i)]{Martin} to get (a). Finally, (b) and (c) follow from (a), Lemma \ref{claveD1} and  Lemma \ref{claveD2}.
\end{dem}

\begin{teo}\label{FilTrProj} Let $\R$ be a locally finite $\K$-ringoid and $\B=\{\B_{i}\}_{i< \alpha}$ be an  exhaustive family  of subcategories of
$\R.$ Then, the following statements are equivalent,  for  $i<\alpha$ and $e\in\sigma_i(\B).$
\begin{itemize}
\item[(a)] $\Tr_{\bigoplus_{j<i}\overline{P}(j)}\,(P_e(i))\in\F_f(\bigcup_{j<i}\,\Delta(j)).$
\item[(b)] The set $\{j<\alpha\;:\; I_{\B_j}(-,e)/I'_{\B_j}(-,e)\neq 0\}$ is finite, there is some $i_0<\alpha$ such
that $I_{\B_j}(-,e)=\R(-,e)$ for $j\geq i_0,$ and
$$I_{\B_t}(-,e)/I'_{\B_t}(-,e)\in\proj_\rho(\R/I'_{\B_t})$$ for any  $t<\alpha.$
\end{itemize}
\end{teo}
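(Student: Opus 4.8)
The plan is to apply the trace-filtration characterization of finite $\Delta$-filtrations (Theorem~\ref{filtraza}) to the module $M:=\Tr_{\bigoplus_{j<i}\overline{P}(j)}(P_e(i))$ and then translate the three resulting conditions into the ideal-theoretic ones of (b).

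First I would set up the dictionary. Since $\B$ is exhaustive, Lemma~\ref{AuxB} gives $I'_{\B_t}=\sum_{j<t}I_{\B_j}=I_{\bigcup_{j<t}\B_j}$, and Lemma~\ref{LARp2}(a) (equivalently Proposition~\ref{DSuppFin}(a)) identifies $\overline{\tau}_t(Y(e))=I'_{\B_t}(-,e)$ and $\tau_t(Y(e))=I_{\B_t}(-,e)$ for every $t$; in particular $M=\overline{\tau}_i(Y(e))=I'_{\B_i}(-,e)$. A routine computation using the idempotency and monotonicity of the trace subfunctors $\tau_t,\overline{\tau}_t$ (Lemma~\ref{SubfTr}, Lemma~\ref{SubfTr2}) then yields $\tau_t(M)=I_{\B_t}(-,e)$ and $\overline{\tau}_t(M)=I'_{\B_t}(-,e)$ for $t<i$, while $\tau_t(M)=\overline{\tau}_t(M)=M$ for $t\ge i$. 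Hence $\tau_t/\overline{\tau}_t(M)=I_{\B_t}(-,e)/I'_{\B_t}(-,e)$ for $t<i$ and $=0$ for $t\ge i$, so $\Supp(\tau_{\B,M})\subseteq[0,i)$ and $\tau_j(M)=M$ for all $j\ge i$. By Theorem~\ref{filtraza} together with Remark~\ref{Rkfiltraza}, statement (a) is therefore equivalent to: the set $\{t<i:\ I_{\B_t}(-,e)\neq I'_{\B_t}(-,e)\}$ is finite and $I_{\B_t}(-,e)/I'_{\B_t}(-,e)\in\Delta(t)^{\oplus}$ for every $t<i$. Since $e\in\B_i$ forces $1_e\in I_{\B_i}(e,e)$, one has $I_{\B_j}(-,e)=\R(-,e)$ for all $j\ge i$, so the ``$\exists\,i_0$'' clause of (b) is automatic (take $i_0=i$); moreover $I_{\B_i}(-,e)/I'_{\B_i}(-,e)=\Delta_e(i)\neq0$ by Proposition~\ref{DSuppFin}, and $I_{\B_j}(-,e)/I'_{\B_j}(-,e)=0$ for $j>i$, so the above finiteness condition is equivalent to finiteness of $\{j<\alpha:\ I_{\B_j}(-,e)/I'_{\B_j}(-,e)\neq0\}$.

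It remains to compare, for $t<i$, the conditions $I_{\B_t}(-,e)/I'_{\B_t}(-,e)\in\Delta(t)^{\oplus}$ and $I_{\B_t}(-,e)/I'_{\B_t}(-,e)\in\proj_\rho(\R/I'_{\B_t})$ (for $t\ge i$ both hold automatically: $I_{\B_t}(-,e)/I'_{\B_t}(-,e)$ is $0$ or, for $t=i$, the representable $\R/I'_{\B_i}$-module at $e$, which is $\Delta_e(i)$). The key lemma I would isolate is: for $b\in\sigma_t(\B)$ there is a natural isomorphism $\Delta_b(t)\cong\Hom_{\R/I'_{\B_t}}(-,b)$ of right $\R/I'_{\B_t}$-modules, and $\End(\Delta_b(t))\cong\End_\R(b)/I'_{\B_t}(b,b)$ is the quotient of the local ring $\End_\R(b)$ by a proper two-sided ideal --- proper because the partition property implies $b\notin\bigcup_{j<t}\B_j$, i.e. $1_b\notin I'_{\B_t}(b,b)$ --- hence local. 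Consequently every $\Delta_b(t)$ with $b\in\sigma_t(\B)$ is a finitely generated (in fact representable) projective $\R/I'_{\B_t}$-module and is local, so $\Delta(t)^{\oplus}\subseteq\proj_\rho(\R/I'_{\B_t})$ and $\Delta(t)^{\oplus}$ is closed under direct summands (Krull--Schmidt for finite direct sums of local objects). For the reverse implication, from the trace description $I_{\B_t}(-,e)=\Tr_{\bigoplus_{j\le t}\overline{P}(j)}(Y(e))$ I would produce an epimorphism $\bigl(\bigoplus_{b\in\sigma_t(\B)}\Delta_b(t)\bigr)^{(X)}\twoheadrightarrow I_{\B_t}(-,e)/I'_{\B_t}(-,e)$ in $\Mod_\rho(\R/I'_{\B_t})$, using that the summands of $\bigoplus_{j\le t}\overline{P}(j)$ indexed by $j<t$ become zero in $\Mod_\rho(\R/I'_{\B_t})$ (as $Y(b)=0$ there for $b\in\bigcup_{j<t}\B_j$). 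If $I_{\B_t}(-,e)/I'_{\B_t}(-,e)$ is projective this epimorphism splits, so that module is a finitely generated direct summand of a coproduct of copies of the $\Delta_b(t)$, $b\in\sigma_t(\B)$, hence --- being finitely generated --- a direct summand of a finite subcoproduct; by the locality of the $\Delta_b(t)$ it then lies in $\Delta(t)^{\oplus}$.

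Combining the two reductions yields (a) $\Leftrightarrow$ (b). The main obstacle is the reverse half of the last step, namely recovering membership in $\Delta(t)^{\oplus}$ from projectivity over $\R/I'_{\B_t}$: this rests on the precise trace computation expressing $I_{\B_t}(-,e)/I'_{\B_t}(-,e)$ as a quotient of a coproduct involving \emph{only} the $\Delta_b(t)$ with $b\in\sigma_t(\B)$, on the standard fact that a finitely generated direct summand of an arbitrary coproduct is a summand of a finite subcoproduct, and on the locality of the $\Delta_b(t)$ --- which, notably, does not require the global hypothesis $\Delta\subseteq\finp_\rho(\R)$ but follows from $\End_\R(b)$ being local together with $b\notin\bigcup_{j<t}\B_j$. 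Some care is also needed throughout with the idempotency and monotonicity of $\tau_t,\overline{\tau}_t$ when these functors are restricted to submodules of $Y(e)$.
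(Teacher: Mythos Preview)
Your argument is correct and, in one respect, cleaner than the paper's. Both proofs rest on Theorem~\ref{filtraza}, but the routes diverge as follows.

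For (a)$\Rightarrow$(b) the paper applies Theorem~\ref{filtraza} to $P^{op}_e(i)=Y(e)$ itself (after noting $P^{op}_e(i)\in\F_f(\bigcup_{j\le i}\Delta(j))$), whereas you apply it directly to $M=\overline{\tau}_i(Y(e))$; this is only a cosmetic difference, since the trace-filtration data of $M$ and of $Y(e)$ agree below $i$. The genuine difference is in (b)$\Rightarrow$(a). The paper does not invoke Theorem~\ref{filtraza} backwards; instead it reconstructs the filtration by hand via Remark~\ref{Rkfiltraza} and then, to pass from $I_{\B_t}(-,e)/I'_{\B_t}(-,e)\in\proj_\rho(\R/I'_{\B_t})$ to membership in $\Delta(t)^{\oplus}$, appeals to an external structural result (\cite[Lemma~3.5]{Martin}) producing an object $e'\in\B_t$ with $I_{\B_t}(-,e)/I'_{\B_t}(-,e)\simeq\R(-,e')/I'_{\B_t}(-,e')$, followed by a decomposition of $e'$. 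Your route is self-contained: you build the epimorphism $\bigl(\bigoplus_{b\in\sigma_t(\B)}\Delta_b(t)\bigr)^{(X)}\twoheadrightarrow I_{\B_t}(-,e)/I'_{\B_t}(-,e)$ from the trace description, split it by projectivity, and then use that each $\Delta_b(t)$ is local (as a quotient of the local ring $\End_\R(b)$ by a proper ideal) to conclude via Krull--Schmidt for finite sums of local objects. This also lets you bypass the hypothesis $\Delta\subseteq\finp_\rho(\R)$ used in Lemma~\ref{claveD}(a) for locality, which is a small gain. The paper's approach has the mild advantage that \cite[Lemma~3.5]{Martin} immediately hands you a \emph{representable} form of the projective, while your splitting argument needs the two auxiliary facts you flag (a finitely generated summand of a coproduct sits in a finite subcoproduct, and Krull--Schmidt for sums of local objects); both are standard, so there is no gap.
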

\begin{dem} Let $e\in\sigma_i(\B)$ and $t<\alpha.$ By Lemma \ref{AuxB} and Proposition \ref{DSuppFin}, we have $I_{\B_t}(-,e)=\tau_t(P^{op}_e(i))$ and
$I'_{\B_t}(-,e)=\overline{\tau}_t(P^{op}_e(i)).$ In particular, $\overline{\tau}_i(P^{op}_e(i))=\Tr_{\bigoplus_{j<i}\overline{P}(j)}\,(P^{op}_e(i))$ and
$\Supp(\tau_{\B,P^{op}_e(i)})=\{j<\alpha\;:\; I_{\B_j}(-,e)/I'_{\B_j}(-,e)\neq 0\}.$
\

(a) $\Rightarrow$ (b) By (a) and  the following exact sequence
$$\xymatrix{0\ar[r] & \overline{\tau}_i(P^{op}_e(i))\ar[r] & P^{op}_{e}(i)\ar[r] & \Delta_{e}(i)\ar[r] & 0},$$
it follows that $P^{op}_{e}(i)\in \ltt{F}_{f}\Big(\bigcup_{j\leq i}\Delta(j)\Big).$  Then, by Theorem \ref{filtraza}, we get that $\Supp(\tau_{\B,P^{op}_e(i)})$ is finite, there is some $i_0<\alpha$ such that $\tau_j(P^{op}_e(i))=P^{op}_e(i)$ for $j\geq i_0,$ and $\tau_k/\overline{\tau}_k(P^{op}_e(i))\in\Delta(k)^\oplus$ for any $k<\alpha.$
\

Let $t<\alpha.$ For each $h\in\sigma_t(\B),$ we have $\Delta_h(t)=\R(-,h)/I'_{\B_t}(-,h)$ and thus $\Delta_h(t)\in\proj_\rho(\R/I'_{\B_t}).$ Then,
$\tau_t/\overline{\tau}_t(P^{op}_e(i))\in\Delta(t)^\oplus$ implies that $\tau_t/\overline{\tau}_t(P^{op}_e(i))\in\proj_\rho(\R/I'_{\B_t}).$
\

(b) $\Rightarrow$ (a) Let (b) holds true. We need to show that $\overline{\tau}_i(P^{op}_e(i))\in\F_f(\bigcup_{j<i}\,\Delta).$ We may assume that $\overline{\tau}_i(P^{op}_e(i))\neq0.$
\

By hypothesis, there is some $k_0<\alpha$ such that $I_{\B_k}(-,e)=I_{\B_{k_0}}(-,e)=\R(-,e),$ for any $k\geq k_0.$ Consider the set
$S:=\{k\leq k_0\;:\; I_{\B_k}(-,e)=I_{\B_{k_0}}(-,e)\}.$ Since $S\neq\emptyset$ there exists $k_1:=\min\,S.$ Therefore,
$I_{\B_k}(-,e)=I_{\B_{k_1}}(-,e)=\R(-,e)$ for any $k\geq k_1,$ and $I_{\B_j}(-,e)\subsetneq I_{\B_{k_1}}(-,e)$ for $j<k_1.$
\

We assert that $i<k_1.$ Indeed, suppose that $k_1\leq i.$ Then
\begin{align*}
\overline{\tau}_i(P^{op}_e(i))&=\sum_{j<i}\,\tau_j(P^{op}_e(i))\\
                                      &= \sum_{j<k_1}\,\tau_j(P^{op}_e(i))+\sum_{k_1\leq j<i}\,\tau_j(P^{op}_e(i))\\
                                      &=\sum_{j<k_1}\,\tau_j(P^{op}_e(i))+P^{op}_e(i)\\
                                      &=P^{op}_e(i),
\end{align*}
and thus $\Delta_e(i)=P^{op}_e(i)/\overline{\tau}_i(P^{op}_e(i))=0,$ contradicting Proposition \ref{DSuppFin} (a); proving that $i<k_1.$ Let
$\Supp(\tau_{\B,P^{op}_e(i)})=\{i_1<i_2<\cdots<i_a\}.$ Note that $i_a<k_1.$
\

We assert that $\overline{\tau}_i(P^{op}_e(i))=I_{\B_{i_k}}(-,e)$ for some $k\in[1,a]$ with $i_k<i.$\\
Indeed, we have two cases to consider: (1) Let $i=i_k$ for some $k\in[1,a].$ Since $\overline{\tau}_i(P^{op}_e(i))\neq0,$ we have that $k\geq 2.$ Then, by
Remark \ref{Rkfiltraza}, we obtain
\begin{align*}
\overline{\tau}_i(P^{op}_e(i))& =\sum_{j<i_k}\,I_{\B_j}(-,e)\\
                                    & =\sum_{j<i_{k-1}}\,I_{\B_j}(-,e)+\sum_{i_{k-1}\leq j<i_k}\,I_{\B_j}(-,e)\\
                                     & =\sum_{j<i_{k-1}}\,I_{\B_j}(-,e)+I_{\B_{i_{k-1}}}(-,e)\\
                                     &= I_{\B_{i_{k-1}}}(-,e).
\end{align*}
(2) Let $i\neq i_k$ for any $k\in[1,a].$ In particular $\overline{\tau}_i(P^{op}_e(i))=\tau_i(P^{op}_e(i))=I_{\B_i}(-.e).$ Moreover, there is some $k\in[1,a)$ such
that $i\in[i_k,i_{k+1}).$ Then, by Remark \ref{Rkfiltraza}, we have that $I_{\B_i}(-.e)=I_{\B_{i_k}}(-,e);$ proving our assertion in both cases.
\

Once we have that $\overline{\tau}_i(P^{op}_e(i))=I_{\B_{i_k}}(-,e)$ for some $k\in[1,a].$ In order to see that $\overline{\tau}_i(P^{op}_e(i))\in\F_f(\Delta),$ by Remark
\ref{Rkfiltraza}, it is enough to prove that $\frac{I_{\B_{k}}(-,e)}{I'_{\B_k}(-,e)}\in\Delta(k)^{\oplus}$ for any $k<\alpha.$
\

Let $k<\alpha.$ By  hypothesis we have that
$$\frac{I_{\B_{k}}(-,e)}{I'_{\B_{k}(-,e)}}
\in \proj_\rho\Big(\frac{\mathfrak{R}}{I'_{\B_k}}\Big).$$
Then by \cite[Lemma 3.5]{Martin}, there is some $e'\in\B_k$ such that
$\frac{I_{\B_{k}}(-,e)}{I'_{\B_k}(-,e)}\simeq\frac{\R(-,e')}{I'_{\B_k}(-,e')}.$ Moreover, since $e'\in\B_k$ and $\R$ is locally finite, it follows that
$e'=\oplus_{i=1}^{n_e}\,t_j^{m_j},$ where $t_1,\dots,t_{n_e}$ are locally and pairwise non isomorphic objects in $\B_k.$ In case, some $t_j\in\B_l$ and $l<k,$ we have that $\R(-,t_j)=\B_l(-,t_j).$ Thus, we may assume that $t_j\in\sigma_k(\B),$ for any $j\in[1,n_e].$ Then
$$\frac{I_{\B_{k}}(-,e)}{I'_{\B_k}(-,e)}\simeq
\bigoplus_{i=1}^{n_e}\Big(\frac{\R(-,t_j)}{I'_{\B_k}(-,t_j)}\Big)^{m_j}=\bigoplus_{i=1}^{n_e}\Delta_{t_j}(k)^{m_j};$$
proving that  $\overline{\tau}_i(P^{op}_e(i))\in \ltt{F}_{f}(\bigcup_{j<i}\Delta(j))$.
\end{dem}

\begin{cor}\label{C2filt-sum-direct}  Let $(\R,\tilde{\A})$ be a right standardly stratified $\K$-ringoid, with $\R$ locally finite, and let $\Delta={}_{\tilde{\A}}\Delta$
be the $\tilde{\A}$-standard family of right $\R$-modules. Then, all the standard modules
$\Delta_e(i)$ are local and the following statements hold true.
\begin{itemize}
\item[(a)] For any $M\in\F_f(\Delta),$ the filtration multiplicity $[M:\Delta_{e}(i)]$ does not depend on a given $\Delta$-filtration of $M.$
\item[(b)] $\F_{f}(\Delta)\subseteq\finp_\rho(\R)$ and it is  a locally finite $\K$-ringoid.
\end{itemize}
\end{cor}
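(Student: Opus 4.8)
The plan is to reduce Corollary \ref{C2filt-sum-direct} to Theorem \ref{filt-sum-direct}, whose hypothesis is precisely that $\Delta={}_\B\Delta\subseteq\finp_\rho(\R)$. So the whole task is to verify that, under the standing assumption that $(\R,\tilde{\A})$ is a right standardly stratified $\K$-ringoid with $\R$ locally finite, every standard module $\Delta_e(i)$ is finitely presented. Once that is in hand, Theorem \ref{filt-sum-direct} immediately gives that all $\Delta_e(i)$ are local together with statements (a) and (b), and nothing more needs to be said.

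First I would translate the data: let $\B=\B(\tilde{\A})=\{\B_i\}_{i<\alpha}$ be the exhaustive admissible family attached to the partition $\tilde{\A}$ via Proposition \ref{P=AS}, so that ${}_{\tilde{\A}}\Delta={}_{\B}\Delta$ and $\sigma_i(\B)=\tilde{\A}_i$. Fix $i<\alpha$ and $e\in\tilde{\A}_i$. By Proposition \ref{DSuppFin}(a) we have the exact sequence
$$0\to \Tr_{\bigoplus_{j<i}\overline{P}(j)}(P^{op}_e(i))\to P^{op}_e(i)\to \Delta_e(i)\to 0,$$
with $P^{op}_e(i)=Y(e)\in\proj_\rho(\R)$. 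Since $(\R,\tilde{\A})$ is standardly stratified, Definition \ref{standardly stra ringoid} says $\Tr_{\bigoplus_{j<i}\overline{P}(j)}(P^{op}_e(i))\in\F_f(\bigcup_{j<i}\Delta(j))\subseteq\F_f(\Delta)$. So $\Delta_e(i)$ is the cokernel of a map from a finitely generated projective into a module lying in $\F_f(\Delta)$.

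The key step is therefore to show $\F_f(\Delta)\subseteq\finp_\rho(\R)$, but this is circular unless handled carefully, since Theorem \ref{filt-sum-direct}(b) is exactly what we want and it needs $\Delta\subseteq\finp_\rho(\R)$ as input. The way out is to prove directly, by transfinite induction on $i<\alpha$, that each $\Delta_e(i)$ is finitely presented, using Theorem \ref{FilTrProj}. Indeed, Theorem \ref{FilTrProj} applied to $e\in\sigma_i(\B)$ tells us that, since (a) holds, condition (b) holds: the support $\Supp(\tau_{\B,P^{op}_e(i)})$ is finite, say $\{i_1<\cdots<i_a\}$, there is $i_0$ with $\tau_j(P^{op}_e(i))=P^{op}_e(i)$ for $j\geq i_0$, and $\tau_t/\overline{\tau}_t(P^{op}_e(i))\in\proj_\rho(\R/I'_{\B_t})$ for all $t$. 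Combining this with Theorem \ref{filtraza}, $P^{op}_e(i)$ has a \emph{finite} $\Delta$-filtration whose successive quotients are $\tau_{i_k}/\overline{\tau}_{i_k}(P^{op}_e(i))\in\Delta(i_k)^\oplus$; moreover $\Delta_e(i)$ is the top quotient of this filtration (the cokernel of $\overline{\tau}_i(P^{op}_e(i))\hookrightarrow P^{op}_e(i)$, and by Remark \ref{Rkfiltraza} $\overline{\tau}_i(P^{op}_e(i))=\tau_{i_{a-1}}(P^{op}_e(i))$ when $i=i_a$, or is a smaller $\tau$-term otherwise). So $\Delta_e(i)$ is an iterated extension of finitely many modules each of which is a finite direct sum of standard modules $\Delta_h(t)$ with $t\leq i$; and for the top one $\Delta_h(i_a)=\Delta_h(i)$ we have $\Delta_h(i)=\R(-,h)/I'_{\B_i}(-,h)$, a cokernel of a map between two representable functors once we know $I'_{\B_i}(-,h)$ is finitely generated — which follows from Lemma \ref{LARp2}(b) once we check $\R$ is right support finite, or more robustly from $I'_{\B_i}(-,h)=\overline{\tau}_i(Y(h))$ having finite support and each $\tau$-section finitely presented. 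Running the induction, $\Delta_e(i)\in\finp_\rho(\R)$ because $\finp_\rho(\R)$ is closed under extensions (it is an abelian subcategory of $\Mod_\rho(\R)$; alternatively use that $\R$ locally finite implies $\finp_\rho(\R)$ closed under extensions via Proposition \ref{LARp3} and the pseudokernel discussion, or simply that cokernels of maps out of projectives stay finitely presented). The main obstacle is exactly this bookkeeping: making the transfinite induction close and justifying that the trace terms appearing are finitely generated, i.e. that $\R$ (or at least the relevant part of it) behaves like a right support finite ringoid; if the authors include right support finiteness in the hypotheses of \ref{FilTrProj}'s ambient setup this is immediate, otherwise one extracts it from $\F_f$-filteredness plus local finiteness.

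With $\Delta\subseteq\finp_\rho(\R)$ established, I would finish in one line: apply Theorem \ref{filt-sum-direct} to conclude that all $\Delta_e(i)$ are local and that (a) and (b) hold verbatim. This is the same pattern already used in the proof of Corollary \ref{C1filt-sum-direct}, which deduces the statement from Proposition \ref{DSuppFin} and Theorem \ref{filt-sum-direct}; here Proposition \ref{DSuppFin}'s role is played by the standardly-stratified hypothesis together with Theorem \ref{FilTrProj}.
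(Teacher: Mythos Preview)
Your overall strategy is exactly the paper's: the whole proof is to verify $\Delta\subseteq\finp_\rho(\R)$ and then invoke Theorem \ref{filt-sum-direct}. That reduction is correct, and the opening paragraphs of your proposal are fine.

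Where you diverge from the paper is in \emph{how} you prove $\Delta_e(i)\in\finp_\rho(\R)$, and your write-up becomes muddled at precisely that point. Three issues: (1) The sentence ``$\Delta_e(i)$ is an iterated extension of finitely many modules each of which is a finite direct sum of standard modules $\Delta_h(t)$ with $t\leq i$'' is not right; it is $P^{op}_e(i)$ (equivalently $\overline{\tau}_i(P^{op}_e(i))$ below the top) that carries the filtration, and the top factor is $\Delta_e(i)$ itself, so handling ``the top one'' separately is circular. (2) You invoke right support finiteness via Lemma \ref{LARp2}(b); that hypothesis is \emph{not} available here and is not needed. (3) The claim that $\finp_\rho(\R)$ is an abelian subcategory requires pseudokernels, which are not assumed; closure under extensions does hold (via \cite[Proposition 4.2(c)]{Aus}: $A$ f.g.\ and $C$ f.p.\ imply $B$ f.p.), and that is all you need.

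A cleaned-up version of your transfinite induction on $i$ does work: assuming $\Delta_h(t)\in\finp_\rho(\R)$ for all $t<i$, the standardly-stratified hypothesis gives $\overline{\tau}_i(P^{op}_e(i))\in\F_f(\bigcup_{j<i}\Delta(j))$, hence it is finitely presented by closure under extensions, hence finitely generated, hence $\Delta_e(i)=P^{op}_e(i)/\overline{\tau}_i(P^{op}_e(i))$ is finitely presented. The paper, however, avoids transfinite induction altogether. For a \emph{fixed} $e$, Theorem \ref{FilTrProj}(b) gives $\Supp(\tau_{\B,P^{op}_e(i)})=\{i_1<\cdots<i_a\}$ finite and each quotient $I_{\B_{i_k}}(-,e)/I'_{\B_{i_k}}(-,e)\in\proj_\rho(\R/I'_{\B_{i_k}})$, hence each such quotient is an epimorphic image of a representable and therefore finitely generated as an $\R$-module. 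A \emph{finite} induction on $k\in[1,a]$ (using Remark \ref{Rkfiltraza} to identify $I'_{\B_{i_k}}(-,e)=I_{\B_{i_{k-1}}}(-,e)$) then shows each $I_{\B_{i_k}}(-,e)$ is finitely generated. Since $\overline{\tau}_i(P^{op}_e(i))=I_{\B_{i_k}}(-,e)$ for some $k$ (as noted in the proof of Theorem \ref{FilTrProj}), the trace is finitely generated and $\Delta_e(i)$ is finitely presented by \cite[Proposition 4.2(c)]{Aus}. This route is shorter, needs only ``finitely generated'' rather than ``finitely presented'' for the intermediate terms, and uses the projectivity in the quotient ringoid directly rather than going through lower standard modules.
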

\begin{proof} Let $\tilde{\A}=\{\tilde{\A}_i\}_{i<\alpha}$ be the given parition of $\ind\,(\R).$ By Proposition \ref{P=AS},  we have the  exhaustive family
$\B(\A):=\{\B_{i}(\A)\}_{i<\alpha}$ of $\R.$ Then ${}_{\tilde{\A}}\Delta={}_{\B(\A)}\Delta$ since $\sigma(\B(\A))=\tilde{\A}.$ For simplicity, we
write $\B=\B(\A)$ and $\B_i=\B_i(\A)$ for any $i<\alpha.$ Since $(\R,\tilde{\A})$ is a standardly stratified $\K$-ringoid, the conditions in Theorem \ref{FilTrProj} (b) hold.
\

We start by proving that $\Delta\subseteq\finp_\rho(\R).$ Let $i<\alpha$ and $e\in\sigma_i(\B).$ If $\overline{\tau}_i(P^{op}_e(i))=0$ then
$\Delta_e(i)$ is equal to $P^{op}_e(i),$ which is finitely presented. Assume that $\overline{\tau}_i(P^{op}_e(i))\neq0,$ and let
$\Supp(\tau_{\B,P^{op}_e(i)})=\{i_1<i_2<\cdots<i_a\}.$
\

We assert that $I_{\B_{i_k}}(-,e)$ is finitely generated for any $k\in[1,a].$\\
 Indeed, by Remark \ref{Rkfiltraza}, we have
$I'_{\B_{i_1}}(-,e)=\sum_{j<i_1}\,I_{\B_j}(-,e)=0,$
and thus, by hypothesis, $I_{\B_{i_1}}(-,e)\in\proj_\rho(\R/I'_{\B_{i_1}}).$ Then, there is some $e'\in\R$ such that
$I_{\B_{i_1}}(-,e)=\R(-,e')/I'_{\B_{i_1}}(-,e');$ proving that $I_{\B_{i_1}}(-,e)$ is a finitely generated right $\R$-module. As before, we have that
$I'_{\B_{i_2}}(-,e)=\sum_{j<i_2}\,I_{\B_j}(-,e)=I_{\B_{i_1}}(-,e)$ and $I_{\B_{i_2}}(-,e)/I'_{\B_{i_2}}(-,e)\in\proj_\rho(\R/I'_{\B_{i_2}}).$ Therefore, we
get that the quotient $I_{\B_{i_2}}(-,e)/I_{\B_{i_1}}(-,e)$ is a a finitely generated right $\R$-module. Then, the exact sequence
$0\to I_{\B_{i_1}}(-,e)\to I_{\B_{i_2}}(-,e)\to I_{\B_{i_2}}(-,e)/I_{\B_{i_1}}(-,e)\to 0$ implies that $ I_{\B_{i_2}}(-,e)$ is finitely
generated. It is clear, by induction, that the assertion above holds.
\

In the proof of Theorem \ref{FilTrProj}, we proved that $\overline{\tau}_i(P^{op}_e(i))=I_{\B_{i_k}}(-,e)$ for some $k\in[1,a].$ Thus $\overline{\tau}_i(P^{op}_e(i))$ is
finitely generated. Therefore from the exact sequence $0\to \overline{\tau}_i(P^{op}_e(i))\to P_e(i)\to \Delta_e(i)\to 0$ and \cite[Proposition 4.2 (c) i)]{Aus}, we
conclude that $\Delta_e(i)$ is finitely presented; and thus $\Delta\subseteq\finp_\rho(\R).$ Hence, the result follows from
Theorem \ref{filt-sum-direct}.
\end{proof}

\begin{defi} Let $\R$ be a locally finite $\K$-ringoid and let $\B:=\{\B_{i}\}_{i< \alpha}$ be an exhaustive family of subcategories of $\R.$ We say that $\B$ is {\bf right noetherian} if for any $i<\alpha$ and $e\in\sigma_i(\B)$ the following statements hold: $\Supp(\tau_{\B,P_e^{op}(i)})$ is finite and
there is some $i_0<\alpha$ such that $I_{\B_j}(-,e)=P_e^{op}(i)$ for any $j\geq i_0.$
\end{defi}

\begin{cor}\label{stiss} Let $\R$ be a locally finite $\K$-ringoid and let $\B:=\{\B_{i}\}_{i<\alpha}$ be an exhaustive family of subcategories of
$\R.$ Then, the following statements are equivalent.
\begin{itemize}
\item[(a)] $\B$ is right noetherian and $\R$ is right ideally standardly stratified with respect to $\B.$
\item[(b)] For the partition  $\sigma(\B)$ of $\ind\,(\R),$ related with the family  $\B,$ we have that $(\R,\sigma(\B))$ is a right standardly stratified $\K$-ringoid.
\end{itemize}
\end{cor}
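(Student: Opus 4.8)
The plan is to read off both (a) and (b) as reformulations of the same list of three conditions, namely the one appearing in Theorem~\ref{FilTrProj}(b), and then to deduce the equivalence for free from that theorem. To set up: since $\R$ is locally finite it is Krull--Schmidt, so $(\R,\sigma(\B))$ is a legitimate candidate for a right standardly stratified $\K$-ringoid; by Proposition~\ref{P=AS} the family $\sigma(\B)$ is a partition of $\ind(\R)$ with $\B(\sigma(\B))=\B$, and I write $\Delta:={}_{\B}\Delta={}_{\sigma(\B)}\Delta$. By Definition~\ref{standardly stra ringoid}, statement (b) says exactly that $\Tr_{\bigoplus_{j<i}\overline{P}(j)}(P^{op}_e(i))\in\F_f(\bigcup_{j<i}\Delta(j))$ for all $i<\alpha$ and all $e\in\sigma_i(\B)$, i.e.\ that Theorem~\ref{FilTrProj}(a) holds for every such pair $(i,e)$. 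Applying Theorem~\ref{FilTrProj}, (b) is therefore equivalent to the conjunction, over all $i<\alpha$ and $e\in\sigma_i(\B)$, of: (1) $\{j<\alpha:\ I_{\B_j}(-,e)/I'_{\B_j}(-,e)\neq0\}$ is finite; (2) there is $i_0<\alpha$ with $I_{\B_j}(-,e)=\R(-,e)$ for $j\geq i_0$; (3) $I_{\B_t}(-,e)/I'_{\B_t}(-,e)\in\proj_\rho(\R/I'_{\B_t})$ for every $t<\alpha$.

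Next I will match these three conditions with (a). From the proof of Theorem~\ref{FilTrProj} one has the identities $I_{\B_t}(-,e)=\tau_t(P^{op}_e(i))$, $I'_{\B_t}(-,e)=\overline{\tau}_t(P^{op}_e(i))$ and $P^{op}_e(i)=Y(e)=\R(-,e)$, so $\{j<\alpha:\ I_{\B_j}(-,e)/I'_{\B_j}(-,e)\neq0\}=\Supp(\tau_{\B,P^{op}_e(i)})$; hence conditions (1) and (2), quantified over all $i,e$, are precisely the statement that $\B$ is right noetherian. For condition (3), I claim it is equivalent to the chain $\{I_{\B_i}\}_{i<\alpha}$ being right stratifying, i.e.\ to $\R$ being right ideally standardly stratified with respect to $\B$: indeed $\sum_{i<\alpha}I_{\B_i}=\R$ and $I'_{\B_i}=I_{\bigcup_{j<i}\B_j}$ by Lemma~\ref{AuxB}, and the idempotency clauses in the definition of a right stratifying chain are automatic, since for any class of objects $\B'$ of any ringoid one has $I_{\B'}^2=I_{\B'}$ (any $f=h\circ 1_b\circ g$ factoring through $b\in\B'$ lies in $I_{\B'}(b,-)\cdot I_{\B'}(-,b)$), whence $(I_{\B_i}/I'_{\B_i})^2=(I_{\B_i}^2+I'_{\B_i})/I'_{\B_i}=I_{\B_i}/I'_{\B_i}$ in $\R/I'_{\B_i}$. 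Thus $\{I_{\B_i}\}$ is right stratifying iff $I_{\B_i}(-,a)/I'_{\B_i}(-,a)\in\proj_\rho(\R/I'_{\B_i})$ for every $a\in\R$ and every $i<\alpha$; and since $\R$ is Krull--Schmidt, every $a\in\R$ is a finite direct sum of objects of $\ind(\R)=\bigcup_i\sigma_i(\B)$ and $I_{\B_i}(-,-)/I'_{\B_i}(-,-)$ is additive in its second variable, so, using that $\proj_\rho(\R/I'_{\B_i})$ is closed under finite direct sums and under direct summands, this condition is equivalent to (3). Combining the three equivalences gives (a)$\Leftrightarrow$(b).

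The argument is essentially bookkeeping on top of Theorem~\ref{FilTrProj}; the only genuinely independent observations are the automatic idempotency $I_{\B'}^2=I_{\B'}$, which lets me drop the ``$I^2=I$'' clauses from the definition of a right stratifying chain, and the reduction from arbitrary objects $a\in\R$ to objects of $\ind(\R)$ via the Krull--Schmidt property and closure of $\proj_\rho$ under summands. I do not anticipate a real obstacle beyond keeping the quantifiers over $i$, $e$ and $t$ straight and correctly invoking the dictionary supplied by the proof of Theorem~\ref{FilTrProj}.
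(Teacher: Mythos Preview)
Your proof is correct and follows essentially the same route as the paper: both directions are obtained from Theorem~\ref{FilTrProj}, with the only extra step being the reduction from arbitrary $a\in\R$ to $a\in\ind(\R)$ via the Krull--Schmidt decomposition. Your treatment is in fact slightly more complete than the paper's, since you explicitly verify the idempotency clause $(I_{\B_i}/I'_{\B_i})^2=I_{\B_i}/I'_{\B_i}$ (which the paper leaves implicit) and spell out why the projectivity condition passes to direct summands and finite sums.
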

\begin{dem} (a) $\Rightarrow$ (b)  It follows directly from Theorem \ref{FilTrProj}.
\

(b) $\Rightarrow$ (a) By hypothesis, we have that Theorem \ref{FilTrProj} (b) holds for any $i<\alpha$ and $e\in\sigma_i(\B).$ We need to show that
$$\forall\,t<\alpha\;\forall\,a\in\R\quad I_{\B_t}(-,a)/I'_{\B_t}(-,a)\in\proj_\rho(\R/I'_{\B_t}).$$
Let $t<\alpha$ and $a\in\R.$ We may assume that $a\in\ind\,(\R).$ Since $\sigma(\B)$ is a partition of $\ind\,(\R),$ by Proposition \ref{P=AS}, there is some
$i<\alpha$ such that $a\in\sigma_i(\B).$ Then, by Theorem \ref{FilTrProj} (b) we get that $I_{\B_t}(-,a)/I'_{\B_t}(-,a)\in\proj_\rho(\R/I'_{\B_t}).$
\end{dem}

\begin{teo}\label{stqs} Let $\R$ be a locally finite $\K$-ringoid and let $\B:=\{\B_{i}\}_{i< \alpha}$ be an  exhaustive family of subcategories of
$\R.$ Then, the following statements are equivalent.
\begin{itemize}
\item[(a)] $\B$ is right noetherian and $(\R,\B)$ is a right ideally quasi-hereditary $\K$-ringoid.
\item[(b)]  For the partition  $\sigma(\B)$ of $\ind\,(\R),$  we have that $(\R,\sigma(\B))$ is a right quasi-hereditary $\K$-ringoid and  
$\Hom(\Delta_e(i),\Delta_{e'}(i))=0$ for $e\neq e'$ in $\sigma_i(\B).$
\end{itemize}
 \end{teo}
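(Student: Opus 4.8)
The strategy is to reduce Theorem~\ref{stqs} to the already-established Corollary~\ref{stiss} together with the homological characterizations of quasi-heredity in Lemmas~\ref{claveD1}, \ref{claveD2} and~\ref{Delta1}. The key observation is that ``right ideally quasi-hereditary'' is precisely ``right ideally standardly stratified'' plus the extra condition $I_{\B_j}/I'_{\B_j}\,\rad_{\R/I'_{\B_j}}(-,?)\,I_{\B_j}/I'_{\B_j}=0$ for all $j<\alpha$; and ``right quasi-hereditary'' is ``right standardly stratified'' plus the condition that $\End(\Delta_e(i))$ is a division ring for all $e\in\sigma_i(\B)$, $i<\alpha$. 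So once the standardly stratified parts match up by Corollary~\ref{stiss}, it remains only to match the two ``heredity'' refinements, and this is where Lemmas~\ref{Delta1}, \ref{claveD1} and~\ref{claveD2} enter.

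\textbf{(a)$\Rightarrow$(b).} Assume $\B$ is right noetherian and $(\R,\B)$ is right ideally quasi-hereditary; in particular $\{I_{\B_i}\}_{i<\alpha}$ is a right stratifying chain, so $\R$ is right ideally standardly stratified with respect to $\B$. By Corollary~\ref{stiss}, $(\R,\sigma(\B))$ is a right standardly stratified $\K$-ringoid. Now the hypothesis that $I_{\B_j}/I'_{\B_j}\,\rad_{\R/I'_{\B_j}}(-,?)\,I_{\B_j}/I'_{\B_j}=0$ for all $j<\alpha$ is exactly the hypothesis of Lemma~\ref{Delta1}. Hence Lemma~\ref{Delta1}(b) gives $\Hom(\Delta_e(i),\Delta_{e'}(i))=0$ for $e\neq e'$ in $\sigma_i(\B)$, and Lemma~\ref{Delta1}(c) gives $\End(\Delta_e(i))\simeq \End_\R(e)/\rad(\End_\R(e))$, which is a division ring because $e$ is local in $\R$ (so $\End_\R(e)$ is local and $\End_\R(e)/\rad\End_\R(e)$ is a division ring). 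Thus $(\R,\sigma(\B))$ is right quasi-hereditary, and (b) holds.

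\textbf{(b)$\Rightarrow$(a).} Assume $(\R,\sigma(\B))$ is right quasi-hereditary and $\Hom(\Delta_e(i),\Delta_{e'}(i))=0$ for $e\neq e'$ in $\sigma_i(\B)$. Since right quasi-hereditary implies right standardly stratified, Corollary~\ref{stiss} (direction (b)$\Rightarrow$(a)) yields that $\B$ is right noetherian and $\R$ is right ideally standardly stratified with respect to $\B$; in particular each $I_{\B_j}/I'_{\B_j}$ is right stratifying in $\R/I'_{\B_j}$. It remains to upgrade ``stratifying'' to ``hereditary'', i.e. to prove $I_{\B_j}/I'_{\B_j}\,\rad_{\R/I'_{\B_j}}(-,?)\,I_{\B_j}/I'_{\B_j}=0$ for all $j<\alpha$. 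For a fixed $j$ and objects $e,e'\in\sigma_j(\B)$ one should reformulate this product-of-ideals vanishing, evaluated on the local objects, in terms of the Hom-groups $\R(e,e')$ modulo $I'_{\B_j}(e,e')$: by Lemma~\ref{claveD}(b) we have $\End(\Delta_e(j))\simeq\End_\R(e)/I'_{\B_j}(e,e)$ and $\Hom(\Delta_e(j),\Delta_{e'}(j))\simeq \R(e,e')/I'_{\B_j}(e,e')$. The quasi-heredity hypothesis (via Lemma~\ref{claveD1}, (a)$\Leftrightarrow$(b)) says $I'_{\B_j}(e,e)=\rad_\R(e,e)$, and the $\Hom$-vanishing hypothesis (via Lemma~\ref{claveD2}, (a)$\Leftrightarrow$(b)) says $I'_{\B_j}(e,e')=\rad_\R(e,e')=\R(e,e')$ for $e\neq e'$; together these say that $I'_{\B_j}$ agrees with $\rad_\R$ on all pairs of objects in $\sigma_j(\B)$. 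One then computes, in the quotient ringoid $\R/I'_{\B_j}$, that the radical of $I_{\B_j}/I'_{\B_j}$ as a two-sided ideal is trivial on the relevant homs, which forces the triple product to vanish. Carrying this through for every $j$ gives that $\{I_{\B_j}\}_{j<\alpha}$ is a right hereditary chain, hence $(\R,\B)$ is right ideally quasi-hereditary, proving (a).

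\textbf{Main obstacle.} The delicate point is direction (b)$\Rightarrow$(a): translating the ideal-theoretic condition $I_{\B_j}/I'_{\B_j}\,\rad_{\R/I'_{\B_j}}(-,?)\,I_{\B_j}/I'_{\B_j}=0$ into, and back out of, the module-theoretic conditions on the standard modules $\Delta_e(j)$. One must be careful that the three conditions --- $\End(\Delta_e(j))$ a division ring, $\Hom(\Delta_e(j),\Delta_{e'}(j))=0$ for $e\neq e'$, and $I'_{\B_j}=\rad_\R$ on $\sigma_j(\B)$ --- do indeed assemble into the vanishing of the full matrix product of the ideal $I_{\B_j}/I'_{\B_j}$ with the radical of the quotient ringoid; here one should mimic the argument of \cite[Theorem 3.6]{Martin} (already invoked in the proof of Lemma~\ref{Delta1}(a)) for the classical case, adapting it transfinitely. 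The forward direction (a)$\Rightarrow$(b) is essentially immediate once Lemma~\ref{Delta1} is in hand.
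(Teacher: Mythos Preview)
Your overall strategy is correct and matches the paper's own proof: both directions go through Corollary~\ref{stiss} for the standardly stratified part, and then use Lemmas~\ref{Delta1}, \ref{claveD1}, \ref{claveD2} to handle the heredity refinement. Your (a)$\Rightarrow$(b) direction is complete and essentially identical to the paper's.

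However, in (b)$\Rightarrow$(a) you have left a genuine gap at exactly the point you flag as the ``main obstacle''. You correctly establish (via Lemmas~\ref{claveD1} and~\ref{claveD2}) that $I'_{\B_i}(e,e')=\rad_\R(e,e')$ for all $e,e'\in\sigma_i(\B)$. But to bound $I_{\B_i}\,\rad_\R\,I_{\B_i}$, you need this radical control on \emph{all} of $\ind(\B_i)$, not just on the top section $\sigma_i(\B)$: a morphism in $I_{\B_i}(y,x)$ factors through an object of $\B_i$, whose indecomposable summands may lie in lower sections $\sigma_j(\B)$ with $j<i$. The paper fills this gap with a one-line case analysis: if $e\in\ind(\B_i)\setminus\sigma_i(\B)$, then $e\in\B_j$ for some $j<i$, hence $I_{\B_j}(e,e')=\R(e,e')$ and so $I'_{\B_i}(e,e')=\R(e,e')\supseteq\rad_\R(e,e')$. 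With $\rad_\R(e,e')\subseteq I'_{\B_i}(e,e')$ for all $e,e'\in\ind(\B_i)$, the triple product $I_{\B_i}(e',x)\,\rad_\R(e,e')\,I_{\B_i}(y,e)\subseteq I'_{\B_i}(y,x)$ follows immediately, giving $I_{\B_i}\,\rad_\R\,I_{\B_i}\subseteq I'_{\B_i}$ and hence the heredity condition in $\R/I'_{\B_i}$. No transfinite induction is needed here; your remark about ``adapting it transfinitely'' is a red herring.
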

 \begin{dem} (a) $\Rightarrow$ (b)  Since $\R$ is right ideally quasi-hereditary, it follows from Lemma \ref{Delta1} that
 $I'_{\B_i}(e,e')=\rad_\R(e,e')$ for any $e,e'\in\sigma_i(\B)$ and $i<\alpha.$ Then, by  Corollary \ref{stiss}, Lema \ref{claveD1} and Lemma \ref{claveD2}, we get (b).
 \

  (b) $\Rightarrow$ (a) Since $(\R,\sigma(\B))$ is a right quasi-hereditary $\K$-ringoid and
  $$\Hom(\Delta_e(i),\Delta_{e'}(i))=0\quad\text{for}\quad e, e'\in\sigma_i(\B)$$
  for any $i<\alpha,$ it follows from   Lema \ref{claveD1} and Lemma \ref{claveD2} that $I'_{\B_i}(e,e')=\rad_\R(e,e')$ for any $e,e'\in\sigma_i(\B)$ and
  $i<\alpha.$ We assert that
  $$(*)\quad \rad_\R(e,e')\subseteq I'_{\B_i}(e,e')\quad\forall\,e,e'\in\ind\,(\B_i),\forall\,i<\alpha.$$
  Indeed, let $i<\alpha$ and $e,e'\in\ind\,(\B_i).$ If $e,e'\in\sigma_i(\B),$ then $\rad_\R(e,e')= I'_{\B_i}(e,e').$ Assume that one of them, say $e,$ belongs to $\B_j$ for some $j<i.$ Thus $I_{\B_j}(e,e')=\R(e,e')$ and therefore $I'_{\B_i}(e,e')=\sum_{k<i}\,I_{\B_k}(e,e')=\R(e,e');$ proving that
  $\rad_\R(e,e')\subseteq I'_{\B_i}(e,e').$
  \

  Let $e,e'\in\ind\,(\B_i)$ and $x,y\in\R.$ Then, by $(*)$ we get
  $$I_{\B_i}(e',x)\, \rad_\R(e,e')\,I_{\B_i}(y,e) \subseteq I_{\B_i}(e',x) I'_{\B_i}(e,e')I_{\B_i}(y,e)\subseteq I'_{\B_i}(y,x).$$
  Therefore, we conclude that $I_{\B_i}\,\rad_\R\,I_{\B_i}\subseteq I'_{\B_i}$ for any $i<\alpha.$ Then, as in the proof of \cite[Theorem 3.6 (i)]{Martin})
  and using that $I'_{\B_i}=I_{\bigcup_{j<i}\B_j},$ we obtain that $I_{\B_i}/I'_{\B_i}\,\rad_\R\,I_{\B_i}/I'_{\B_i}=0$ for any $i<\alpha.$  Then, by Corollary \ref{stiss} we get (a).
 \end{dem}

 \section{Rings with enough idempotents}

 In this section, we define the terms ``standardly stratified'' and ``quasi-hereditary'' for a very special subclass of $\K$-algebras (possibly  without $1$) 
 having  good enough properties that allow us to define the standard modules. As we will see,  there are plenty of them and appear in different contexts, for example, as a generalization of Ringel's notion of species \cite{Rin2} or in connection with the Galois covering in the sense of Bongartz and Gabriel \cite{BG} and De la Pe\~na-Martinez \cite{DM}. 
 \

 Let $\Lambda$ be a $\K$-algebra such that $\Lambda^2=\Lambda$ (here  $\Lambda^2$ denotes the $\K$-submodule of $\Lambda$ generated by all
 the products $xy,$ for $x,y\in\Lambda$). We denote by $\Mod(\Lambda)$ the category of all unitary left $\Lambda$-modules $M,$ where unitary means that
 $\Lambda M=M.$ The finitely generated unitary left $\Lambda$-modules is a full subcategory of $\Mod(\Lambda)$ and it is usually denoted by
 $\modu(\Lambda).$ The class of finitely generated projective objects in $\Mod(\Lambda)$ is denoted by $\proj(\Lambda).$ We denote by $f.\ell(\K)$ the class of all the $\K$-modules of {\bf finite length}.
\

A $\K$-algebra {\bf with enough idempotents} ({\bf w.e.i $\K$-algebra}, for short) is a pair $(\Lambda,\{e_i\}_{i\in I}),$ where $\Lambda$ is a $\K$-algebra and
$\{e_i\}_{i\in I}$ is a family of orthogonal idempotents of $\Lambda$ such that  $\Lambda=\oplus_{i\in I}\,e_i\Lambda=\oplus_{i\in I}\Lambda e_i.$ Note that,
for such algebra, we have that $\Lambda^2=\Lambda$ and $\Lambda=\oplus_{(i,j)\in I^2}\,e_i\Lambda\,e_j.$ Moreover, it is said that $(\Lambda,\{e_i\}_{i\in I})$ is {\bf Hom-finite} if
$\{e_j\Lambda e_i\}_{i,j\in I}\subseteq f.\ell(\K).$
\

Let $(\Lambda,\{e_i\}_{i\in I})$ be an w.e.i $\K$-algebra. The $\K$-ringoid $\R(\Lambda)$ associated with $(\Lambda,\{e_i\}_{i\in I})$ is defined as follows:
the objects of $\R(\Lambda)$ is the set $\{e_i\}_{i\in I},$ and the set of morphisms from $e_i$ to $e_j$ is $\Hom_{\R(\Lambda)}(e_i,e_j):=e_j\Lambda e_i.$ The composition
of morphism in $\R(\Lambda)$ is given by the multiplication of $\Lambda.$ We recall that $Y:\R(\Lambda)\to \Mod_\rho(\R(\Lambda))$ is the Yoneda's
contravariant functor, where $Y(e):=\Hom_{\R(\Lambda)}(-,e).$
\

 The following result is more or less known in the mathematical folklore, but for completeness and the benefit or the reader, we state it and give a proof.

\begin{pro}\label{WeiEquiv1} Let $(\Lambda,\{e_i\}_{i\in I})$ be a w.e.i $\K$-algebra. Then, the functor
$$\delta:\Mod_\rho(\R(\Lambda))\to \Mod(\Lambda^{op}), \quad M\mapsto \oplus_{i\in I}\,M(e_i)$$
is an isomorphism of categories, and  $\delta(Y(e_i))=e_i\Lambda$ for any $i\in I.$
\end{pro}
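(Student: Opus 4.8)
I want to establish that $\delta$ is an isomorphism of categories by exhibiting an explicit quasi-inverse. First I would describe $\delta$ carefully on morphisms: given $M \in \Mod_\rho(\R(\Lambda))$, the abelian group $M(\Lambda) := \bigoplus_{i\in I} M(e_i)$ becomes a right $\Lambda$-module via the action where, for $x\in M(e_i)$ and $\lambda\in\Lambda$, writing $\lambda = \sum_j e_j\lambda$ (a finite sum since $\Lambda = \oplus_j \Lambda e_j$, hence $\oplus_j e_j\Lambda$... actually one uses $e_i\lambda \in e_i\Lambda = \oplus_j e_i\Lambda e_j$), one sets $x\cdot \lambda := \sum_j M(e_i\lambda e_j)(x) \in \bigoplus_j M(e_j)$, using that $e_i\lambda e_j \in \Hom_{\R(\Lambda)}(e_j,e_i)$ and $M$ is contravariant. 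For a natural transformation $\phi: M\to N$, set $\delta(\phi) := \bigoplus_i \phi_{e_i}$; naturality of $\phi$ is exactly what makes this $\Lambda$-linear. One checks $\Lambda$-unitarity: $x\cdot \sum_i e_i = x$ because $M(e_i): M(e_i)\to M(e_i)$ is the identity (idempotent morphisms, $M$ additive) and $M(e_i e_j) = 0$ for $i\neq j$ by orthogonality.

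Next I would construct the quasi-inverse $\Theta: \Mod(\Lambda^{op}) \to \Mod_\rho(\R(\Lambda))$. Given a unitary right $\Lambda$-module $N$, define $\Theta(N)(e_i) := Ne_i$, and for a morphism $e_j\xrightarrow{a} e_i$ in $\R(\Lambda)$, i.e.\ $a\in e_i\Lambda e_j$, define $\Theta(N)(a): Ne_i\to Ne_j$, $n\mapsto na$ (note $na = ne_i a = n(e_i a e_j)\in Ne_j$). Contravariant functoriality is immediate from associativity in $\Lambda$, and additivity is clear. On morphisms $\psi: N\to N'$ set $\Theta(\psi)_{e_i} := \psi|_{Ne_i}$; this is a natural transformation since $\psi$ is $\Lambda$-linear.

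Then I would verify $\delta\circ\Theta = \mathrm{id}$ and $\Theta\circ\delta = \mathrm{id}$, on objects and morphisms. For $\delta\Theta(N) = \bigoplus_i Ne_i$, the canonical map $\bigoplus_i Ne_i \to N$, $(n_i)_i \mapsto \sum_i n_i$, is an isomorphism of right $\Lambda$-modules precisely because $N$ is unitary (surjectivity: $N = N\Lambda = \sum_i Ne_i$) and because $\Lambda = \oplus_i \Lambda e_i$ forces the sum $\sum_i Ne_i$ to be direct (injectivity). Compatibility of this iso with the $\Lambda$-action and with morphisms is a routine check. For $\Theta\delta(M)$, on object $e_i$ we get $(\bigoplus_j M(e_j))e_i$, which under the $\Lambda$-action equals $M(e_i)$ since $x\cdot e_i$ picks out precisely the $M(e_i)$-component; and the induced $\R(\Lambda)$-action matches $M$ by construction. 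Both natural identifications are functorial in the obvious way. Finally, the computation $\delta(Y(e_i)) = \bigoplus_j \Hom_{\R(\Lambda)}(e_j,e_i) = \bigoplus_j e_i\Lambda e_j = e_i\Lambda$ follows from $\Lambda = \oplus_j \Lambda e_j$, and one checks the right $\Lambda$-module structure on $\delta(Y(e_i))$ transported from the functor $Y(e_i)$ agrees with multiplication in $e_i\Lambda$ — again using that composition in $\R(\Lambda)$ is multiplication in $\Lambda$.

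**Main obstacle.** The genuinely content-bearing step is checking that the canonical map $\bigoplus_{i} Ne_i \to N$ is bijective, i.e.\ that the unitarity hypothesis $N\Lambda = N$ together with $\Lambda = \oplus_i \Lambda e_i$ yields an \emph{internal direct sum} decomposition $N = \bigoplus_i Ne_i$; surjectivity uses unitarity and injectivity uses the directness of $\oplus_i \Lambda e_i$ (if $\sum_i n_ie_i = 0$ with $n_i = n_i e_i$, multiply on the right by $e_j$ to extract $n_je_j = 0$). Everything else is bookkeeping: making sure the finite-support conventions are consistent (so the $\R(\Lambda)$-module $\Theta(N)$ is a genuine additive functor and the sums defining the $\Lambda$-action on $\delta(M)$ are finite), and that all the naturality squares commute. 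I would present the two functors and the two natural isomorphisms, verify $\Lambda$-linearity / naturality at each stage, and leave the purely diagrammatic verifications to the reader.
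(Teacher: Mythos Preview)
Your proposal is correct and follows essentially the same route as the paper's proof: both construct the explicit inverse functor $\Theta$ (the paper calls it $\varepsilon$) sending $N\mapsto (e_i\mapsto Ne_i)$, verify the two compositions are the identity, and identify the key content as the internal direct-sum decomposition $N=\bigoplus_i Ne_i$ for unitary $N$. One small point: the statement asserts an \emph{isomorphism} of categories, not merely an equivalence, so you should phrase the verifications as literal equalities $\Theta\delta(M)=M$ and $\delta\Theta(N)=N$ (as the paper does) rather than as natural isomorphisms via a ``canonical map''; your argument already delivers this once you interpret $\bigoplus_i Ne_i$ as the internal direct sum inside $N$.
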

\begin{dem} Let $f:M\to N$ in  $\Mod_\rho(\R(\Lambda)).$ For each $i\in I,$ we have $f_{e_i}:M(e_i)\to N(e_i)$ and thus $\delta(f):=\oplus_{i\in I}\,f_{e_i}.$
\

The structure of $\Lambda$-module on $\delta(M):$\\
Let $\lambda\in\Lambda=\oplus_{i,j}\,e_j\Lambda e_i$ and $m\in\delta(M)=\oplus_{i\in I}\,M(e_i).$ Then, we have that $\lambda=\sum_{i,j}\,\lambda_{i,j}$
and $m=\sum_{i}\,m_i,$ where $\lambda_{i,j}\in e_j\Lambda e_i$ and $m_i\in M(e_i).$  Since $\lambda_{i,j}:e_i\to e_j$ is
a morphism in $\R(\Lambda),$ we obtain $M(\lambda_{i,j}):M(e_j)\to M(e_i).$ We set $(m\cdot\lambda)_t:=\sum_i\,M(\lambda_{t,i})(m_i).$ It is a
routine calculation
to show that $\delta(M)$ is a right $\Lambda$-module. Observe that $m\cdot e_j=m_j$ and thus $\delta(M)\cdot e_j=M(e_j),$ for any $j\in I.$ Let us consider
$e:=\sum_{i\in\Supp(m)}\,e_i,$ where $\Supp(m):=\{i\in I\;:\;m_i\neq 0\}.$ Since $m\cdot e_j=m_j,$ it follows that $m\cdot e=m$ and thus
$\delta(M)\cdot\Lambda=\delta(M).$
\

Consider the correspondence
$$\varepsilon:\Mod(\Lambda^{op})\to \Mod_\rho(\R(\Lambda)),\quad X\mapsto (e_i\mapsto Xe_i).$$
Let $g:X\to Y$ in $\Mod(\Lambda^{op})$ and $\lambda_{i,j}:e_i\to e_j$ in $\R(\Lambda).$ Le $X(\lambda_{i,j}):Xe_j\to Xe_i$ and
$\varepsilon_{e_j}(g):Xe_j\to Ye_j$ be defined as $X(\lambda_{i,j})(xe_j):=xe_j\lambda_{i,j}$ and $\varepsilon_{e_j}(g)(xe_j):=g(xe_j).$ It can be seen that
$\varepsilon(g):\varepsilon(X)\to\varepsilon(X)$ is a morphism in $\Mod_\rho(\R(\Lambda)),$ and moreover, it is a functor.
\

Let $M\in\Mod_\rho(\R(\Lambda).$ We know that $\delta(M)\cdot e_j=M(e_j).$ Therefore
$$(\varepsilon\delta(M))(e_j)=\delta(M)\cdot e_j=M(e_j).$$
Let $X\in\Mod(\Lambda^{op}).$ Since $X\Lambda=X$ and $\Lambda=\oplus_{i\in I}\Lambda e_i,$ we get
$$\varepsilon\delta(X)=\bigoplus_{i\in I}\varepsilon\delta(X)e_i=\bigoplus_{i\in I}\delta(X)\cdot e_i=\oplus_{i\in I}Xe_i=X.$$
Thus $\delta$ is an isomorphism of categories with inverse $\varepsilon.$ Finally, we have
$$\delta(Y(e_i))=\bigoplus_{j\in I}Y(e_i)(e_j)=\bigoplus_{j\in I}e_i\Lambda e_j=e_i\Lambda.$$
\end{dem}

\begin{rk} \label{aurxRingoids} Let $(\Lambda,\{e_i\}_{i\in I})$ be a Hom-finite w.e.i $\K$-algebra.
Let $\overline{\R}(\Lambda):=\proj_\rho(\R(\Lambda)).$ Then,  $\overline{\R}(\Lambda)$ is a locally finite $\K$-ringoid. Moreover, It is well known \cite{Mitchell}, that the restriction functor
$$\Psi:\Mod_\rho(\overline{\R}(\Lambda))\to \Mod_\rho(\R(\Lambda)),\quad F\mapsto F|_{\R(\Lambda)}$$
 is an equivalence of categories and $\Psi((-,Y(e_i)))=Y(e_i)$ for any $i\in I.$ Therefore $\Psi(\proj_\rho(\overline{\R}(\Lambda)))=\proj_\rho(\R(\Lambda)).$ Thus, by using that $\overline{\R}(\Lambda)$ is a locally finite $\K$-ringoid, we can translate in terms of $\R(\Lambda)$ (and also in terms of $\Lambda$) all the
 result that we have proven for  locally finite $\K$-ringoids.
\end{rk}

\begin{lem}\label{auxLHF} Let $(\Lambda,\{e_i\}_{i\in I})$ be a Hom-finite w.e.i. $\K$-algebra and let  $f$ and $g$ be idempotents in 
$\Lambda.$ Then, the following statements hold true:
\begin{itemize}
\item[(a)] $g\Lambda f \subseteq f.\ell(\K);$
\item[(b)] $f\Lambda\simeq g\Lambda$ $\Leftrightarrow$ $\Lambda f\simeq \Lambda g.$
\end{itemize}
\end{lem}  
\begin{dem} (a) We have the finite sums $f=\sum_{k,l}\,e_{k,l}$  and $g=\sum_{i,j}\,e_{i,j},$ where 
$\,e_{k,l}\in e_k\Lambda e_l$ and $\,e_{i,j}\in e_i\Lambda e_j;$  and thus
$g\Lambda f=\sum_{i.j,k,l}e_{i,j}\Lambda e_{k,l}.$
Moreover, each $e_{i,j}\Lambda e_{k,l}\subseteq e_i\Lambda e_l$ and so it has finite length as $\K$-module. Therefore $g\Lambda f$ has finite length as $\K$-module. 
\

(b) It follows by applying the functor $\Hom(-\Lambda)$ to the given isomorphism and by using that 
$\Hom(\Lambda e,\Lambda)\simeq e\Lambda$ and $\Hom(e\Lambda,\Lambda)\simeq \Lambda e,$ for any $e^2=e\in\Lambda.$
\end{dem}

\begin{pro}\label{WeiEquiv2} For a  w.e.i $\K$-algebra  $(\Lambda,\{e_i\}_{i\in I}),$  the following statements are equivalent.
\begin{itemize}
\item[(a)] $(\Lambda,\{e_i\}_{i\in I})$ is Hom-finite.
\item[(b)] $\proj(\Lambda^{op})$ is a locally finite $\K$-ringoid.
\item[(c)] $\proj_\rho(\R(\Lambda))$ is a locally finite $\K$-ringoid.
\item[(d)] $\proj(\Lambda)$ is a locally finite $\K$-ringoid.
\item[(e)] $\proj(\R(\Lambda))$ is a locally finite $\K$-ringoid.
\end{itemize}
\end{pro}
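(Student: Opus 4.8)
The plan is to prove all five statements are equivalent by first establishing the key dictionary $(a)\Leftrightarrow(b)\Leftrightarrow(c)$ via Proposition~\ref{WeiEquiv1}, and then obtaining $(b)\Leftrightarrow(d)$ and $(c)\Leftrightarrow(e)$ either by a left-right symmetry argument or by repeating the same reasoning with $\Lambda$ in place of $\Lambda^{op}$. First I would recall that by Proposition~\ref{WeiEquiv1}, $\delta:\Mod_\rho(\R(\Lambda))\to\Mod(\Lambda^{op})$ is an isomorphism of categories carrying $Y(e_i)$ to $e_i\Lambda$. Since an isomorphism of categories restricts to an isomorphism on any full subcategory defined by an intrinsic property, and since ``finitely generated projective'' is such a property, $\delta$ restricts to an isomorphism $\proj_\rho(\R(\Lambda))\simeq\proj(\Lambda^{op})$. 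An isomorphism of $\K$-categories obviously preserves being Hom-finite, Krull-Schmidt, having pseudokernels, etc., so $(b)\Leftrightarrow(c)$ is immediate. This reduces the whole proposition to relating Hom-finiteness of the pair $(\Lambda,\{e_i\}_{i\in I})$ with $\proj_\rho(\R(\Lambda))$ being locally finite.

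For $(a)\Rightarrow(c)$: assume $\{e_j\Lambda e_i\}_{i,j\in I}\subseteq f.\ell(\K)$. This says precisely that $\Hom_{\R(\Lambda)}(e_i,e_j)=e_j\Lambda e_i$ is a $\K$-module of finite length for all $i,j$, i.e.\ $\R(\Lambda)$ is a Hom-finite $\K$-ringoid. Now $\proj_\rho(\R(\Lambda))=\add(Y(\R(\Lambda)))$, and $\Hom_{\Mod_\rho(\R(\Lambda))}(Y(e_i),Y(e_j))\simeq\Hom_{\R(\Lambda)}(e_i,e_j)=e_j\Lambda e_i$ by Yoneda, which has finite length; since finite direct sums and summands of finite-length $\K$-modules have finite length, $\proj_\rho(\R(\Lambda))$ is Hom-finite. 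Moreover $\proj_\rho(\R(\Lambda))$ is additive with idempotents splitting (it is the category of projective summands), so it is thick; and the endomorphism ring of any finite-length $\K$-module is semiperfect (it is a module-finite algebra over a complete-enough base, or more elementarily it is semilocal and idempotents lift because it has a nilpotent-modulo-Jacobson structure — in any case, one cites that a finite-length algebra is semiperfect), hence by \cite[Corollary 4.4]{Krause} the category is Krull-Schmidt. Thus $\proj_\rho(\R(\Lambda))$ is locally finite. For $(c)\Rightarrow(a)$: if $\proj_\rho(\R(\Lambda))$ is Hom-finite then in particular $\Hom(Y(e_i),Y(e_j))\simeq e_j\Lambda e_i$ has finite length for all $i,j$, which is exactly Hom-finiteness of $(\Lambda,\{e_i\}_{i\in I})$.

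Finally, for the left-hand side statements $(d)$ and $(e)$, I would observe that $(\Lambda,\{e_i\}_{i\in I})$ is Hom-finite if and only if $(\Lambda^{op},\{e_i\}_{i\in I})$ is Hom-finite, since $e_j\Lambda e_i$ has finite length over $\K$ iff its opposite $e_i\Lambda^{op}e_j$ does. Applying the already-proved equivalences $(a)\Leftrightarrow(b)\Leftrightarrow(c)$ to the w.e.i.\ $\K$-algebra $(\Lambda^{op},\{e_i\}_{i\in I})$ — whose associated ringoid is $\R(\Lambda)^{op}$, with $\proj(\R(\Lambda)^{op})=\proj(\R(\Lambda))$ in the notation of the preliminaries, and with $\Mod_\rho(\R(\Lambda)^{op})\simeq\Mod(\Lambda)$ — yields $(a)\Leftrightarrow(d)\Leftrightarrow(e)$. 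Stringing everything together gives the five-way equivalence. The only mildly delicate point, and the one I would be most careful about, is the semiperfectness needed to invoke \cite[Corollary 4.4]{Krause}: one must argue that $\End_\R(C)$ is semiperfect for each object $C$, and the cleanest route is exactly the one used in the proof of Proposition~\ref{LARp3}, namely that a $\K$-algebra of finite length over $\K$ (here $\End_{\proj_\rho(\R(\Lambda))}(Q)$ for $Q$ a finite sum of representables) is left artinian, hence semiperfect; so really this proposition is a mild bookkeeping extension of Proposition~\ref{LARp3}, combined with the category isomorphism of Proposition~\ref{WeiEquiv1} and the left-right symmetry.
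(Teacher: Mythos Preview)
Your proposal is correct and follows essentially the same approach as the paper: both use the category isomorphism of Proposition~\ref{WeiEquiv1} for $(b)\Leftrightarrow(c)$, both deduce Hom-finiteness of the projective category from the Yoneda isomorphism $\Hom(Y(e_i),Y(e_j))\simeq e_j\Lambda e_i$, and both obtain the Krull--Schmidt property by showing endomorphism rings are semiperfect because they are artinian (finite $\K$-length). The only cosmetic differences are that the paper works on the $\Lambda^{op}$-module side and cites \cite[49.10]{W} to reduce Krull--Schmidt to semiperfectness of $e\Lambda e$ for an arbitrary idempotent $e\in\Lambda$ (then computing $e\Lambda e\subseteq\sum e_i\Lambda e_l$ directly), whereas you work on the ringoid side and cite \cite[Corollary~4.4]{Krause} as in Proposition~\ref{LARp3}; and the paper leaves $(d),(e)$ entirely implicit while you spell out the $\Lambda\leftrightarrow\Lambda^{op}$ symmetry.
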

\begin{dem} Let $i,j\in I.$ Then,  we have the isomorphisms of $\K$-modules
$$e_j\Lambda e_i=\Hom_{\R(\Lambda)}(e_i,e_j)\simeq\Hom_\Lambda(e_i\Lambda,e_j\Lambda).$$
Therefore, the fact that (b) (respectively, (d)) implies (a) follows easily, and the equivalence between (b) (respectively, (d))  and (c) (respectively, (e)) can be obtained from Proposition \ref{WeiEquiv1}. Let us prove that
(a) implies (b).
\

Assume that $(\Lambda,\{e_i\}_{i\in I})$ is Hom-finite. Then, it is clear that $\proj(\Lambda^{op})$ is a Hom-finite $\K$-ringoid. In order to prove that
$\proj(\Lambda^{op})$ is a Krull-Schmidt category, it is enough by \cite[49.10]{W} to see that $e\Lambda e$ is a semiperfect ring for any $e^2=e\in\Lambda.$
Indeed, let $e^2=e\in\Lambda.$ Then, by Lemma \ref{auxLHF} (a), we get that $e\Lambda e$ has finite length as $\K$-module and thus it is an
Artin ring. In particular $e\Lambda e$ is semiperfect. The fact that (a) implies (d) can be shown in a similar way.
\end{dem}

\begin{cor} \label{WeiEquiv3} For a Hom-finite w.e.i $\K$-algebra  $(\Lambda,\{e_i\}_{i\in I}),$  the following statements hold true.
\item[(a)] $\End(e\Lambda)$ and $\End(\Lambda e)$ are Artin rings, for any $e^2=e\in\Lambda.$
\item[(b)] For each $i\in I,$ there exists a unique (up to permutations)  family $\overline{e}_i:=\{e_{k,i}\}_{k=1}^{n_i}$ of primitive orthogonal idempotents in 
$\Lambda$ such that $e_i=\sum_{k=1}^{n_i}e_{k,i}.$ 
\end{cor}
\begin{dem} (a) Let $e^2=e\in\Lambda.$ Then, by Lemma \ref{auxLHF} $e\Lambda e\in f.\ell(\K).$  Finally, since 
$\End(e\Lambda)\simeq e\Lambda e\simeq \End(\Lambda e)$ as $\K$-modules, we get (a). 
\

(b) Let $i\in I.$ By Proposition \ref{WeiEquiv2} (b), there is a decomposition 
\begin{center}
$(*)\quad e_i\Lambda=\oplus_{k=1}^{n_i} P_{k,i}$ with $P_{k,i}$ local, for all $k,i.$ 
\end{center}
Since  $e_i=e_i^2\in e_i\Lambda,$ we get from $(*)$ the unique decomposition $e_i=\sum_{k=1}^{n_i}e_{k,i}$ of $e_i.$ Therefore, the family $\{e_{k,i}\}_{k=1}^{n_i}$ consists of orthogonal idempotents in $\Lambda.$ Hence $P_{k,i}=e_{k,i}\Lambda$ for each $k,i.$ But now, since each $P_{k,i}$ is local, we get that $e_{k,i}\Lambda e_{k,i}\simeq \End(e_{k,i}\Lambda)$  has only
trivial idempotents. But the latest condition is equivalent that $e_{k,i}$ be primitive. 
\end{dem}

\begin{cor} \label{WeiEquiv3'} Let  $(\Lambda,\{e_i\}_{i\in I})$ be a Hom-finite w.e.i $\K$-algebra and $\ind\,\{e_i\}_{i\in I}$ be the quotient of the set $\cup_{i\in I}\overline{e}_i$ (see Corollary \ref{WeiEquiv3}) by the equivalence relation $\sim,$ where 
$f\sim g$ if, and only if, $f\Lambda\simeq g\Lambda.$ Denote by $[e]$ the equivalence class of $e\in \cup_{i\in I}\overline{e}_i.$ Then, the following statements hold true
\begin{itemize}
\item[(a)] $\ind\,\overline{\R}(\Lambda)=\{ \delta^{-1}(e\Lambda)\;:\; [e]\in \ind\,\{e_i\}_{i\in I}\};$
\item[(b)] $\ind\,\proj(\Lambda^{op})=\{e\Lambda\;:\;[e]\in \ind\,\{e_i\}_{i\in I}\};$ 
\item[(c)] $\ind\,\proj(\Lambda)=\{\Lambda e\;:\; [e]\in \ind\,\{e_i\}_{i\in I}\}.$
\end{itemize}
\end{cor}
\begin{dem} By Proposition \ref{WeiEquiv1} $\proj(\Lambda^{op})=\delta(\overline{\R}(\Lambda)).$ Then, by Remark \ref{aurxRingoids}, Proposition \ref{WeiEquiv2}, Lemma \ref{LARp1} and Corollary \ref{WeiEquiv3}, we get (a) and (b). In order to show (c), by Corollary \ref{WeiEquiv3}, we have that $\Lambda e_i=\oplus_{k=1}^{n_i}\,\Lambda e_{k,i}$ and $\Lambda e_{k,i}$ is local, for all $i,k.$ Consider the relation on $\cup_{i\in I}\overline{e}_i$ given by:  $f\approx g$ if and only if $\Lambda f\simeq \Lambda g.$ By Lemma \ref{auxLHF} (b), we have that $\approx$ coincide with $\sim.$ Thus,  we obtain (c) in a similar way as we did for (b).
\end{dem}

\begin{defi} Let  $(\Lambda,\{e_i\}_{i\in I})$ be a Hom-finite w.e.i $\K$-algebra. For $M\in\Mod(\Lambda^{op}),$ the support of $M$ is
$$\Supp\,(M):=\{e\in\ind\,\{e_i\}_{i\in I}\;:\;Me\neq 0\}.$$
We say that $\Lambda$ is {\bf right support finite} if $\Supp\,(e\Lambda)$ is finite for any $e\in\ind\,\{e_i\}_{i\in I}.$   Dually,
$\Lambda$ is {\bf left support finite} if $\Supp\,(\Lambda e)$ is finite for any
$e\in\ind\,\{e_i\}_{i\in I}.$ Finally, $\Lambda$ is support finite if it is
right and left support finite.
\end{defi}

\begin{rk} \label{WeiEquiv4} Let  $(\Lambda,\{e_i\}_{i\in I})$ be a Hom-finite w.e.i $\K$-algebra.
\begin{itemize}
\item[(1)] We say that $(\Lambda,\{e_i\}_{i\in I})$ is {\bf basic} if $e_i$ is primitive for each $i$ and $e_i\Lambda\not\simeq e_j\Lambda$ for $e_i\neq e_j.$ Note that $(\Lambda,\{e_i\}_{i\in I})$ is basic if, and only if, $\ind\,\{e_i\}_{i\in I}=\{e_i\}_{i\in I}.$
\item[(2)] By Proposition \ref{WeiEquiv1}, Remark \ref{aurxRingoids}  and Corollary \ref{WeiEquiv3'}, we can see that
$\Lambda$ is right (resp. left) support finite if, and only if, the ringoid $\overline{\R}(\Lambda)$ is  right (resp. left) support finite.
\end{itemize}
\end{rk}

In what follows, we show a natural way to construct basic Hom-finite w.e.i. $\K$-algebras, which are also support finite. By following K. Bongartz and P. Gabriel \cite{BG}, let $\K$ be a field and
$Q$ be a quiver (which may be infinite), $Q_0$ is the set of vertices and $Q_1$ is the set of arrows. A path $\gamma$ in $Q,$ of length $n\geq 1,$ is of
the form $\gamma=\alpha_n\alpha_{n-1}\cdots\alpha_1$ for arrows $\alpha_i\in Q_1,$  and can be visualised as
$a_0\xrightarrow{\alpha_1}a_1\to\cdots \to a_{n-1}\xrightarrow{\alpha_n}a_n.$  We say that $\gamma$ starts at the vertex $a_0$ and ends at the vertex
$a_n.$ The vertices in $Q$ can be seen as paths of length $0,$ and for each $a\in Q_0$ its corresponding path of length zero will be denoted by $\varepsilon_a.$
For each non negative integer $n,$ we denote by $Q_n$ the set of all paths of length $n.$ Let $\K Q_n$ be the $\K$-vector space whose base is the set
$Q_n.$
\

The path  $\K$-algebra is the $\K$-vector space $\K Q:=\bigoplus_{n\geq 0}\,\K Q_n$ whose product of two basis vectors are given by the concatenation
of paths. Note that $\varepsilon_Q:=\{\varepsilon_a\}_{a\in Q_0}$ is a family of orthogonal idempotents in $\K Q,$ and $\varepsilon_b Q_n\varepsilon_a$ is the
set of all paths of length $n,$ which start at $a$ and end at $b.$ Moreover the pair $(\K Q,\varepsilon_Q)$ is a $\K$-algebra with enough idempotents. We
 denote by $J_Q$ the ideal in $\K Q$ generated by the set $Q_1.$ An ideal $I$  of $\K Q$ is {\bf admissible} if $I\subseteq J_Q^2$ and for each
 $x\in Q_0$ there is a natural number $n_x$ such that $I$ contains each path of length $\geq n_x$ which starts or ends at $x.$ For any admissible ideal
 $I$ of $Q,$ we consider the quotient path $\K$-algebra $\K(Q,I):=\K Q/I$ and the set of ortogonal idempotents $e_{Q,I}:=\{e_a\}_{a\in Q_0},$ where
 $e_a:=\varepsilon_a+I.$ We recall that a quiver $Q$ is {\bf locally finite} if for each vertex $x\in Q_0$ there is a finite number of arrows in $Q_1,$ which start
 or end at $x.$ The main properties, from our point of view, of quotient path $\K$-algebras can be summarized in the following proposition.

 \begin{pro} \label{WeiEquiv5} Let $Q$ be a locally finite quiver (which may be infinite), $\K$ be a field and $I$ be an admissible ideal of $\K Q.$ Then, the following statements hold true.
 \begin{itemize}
 \item[(a)] The pair $(\K(Q,I),e_{Q,I})$ is a basic Hom-finite w.e.i $\K$-algebra.
 \item[(b)] $\K(Q,I)$ is support finite.
 \item[(c)] $\proj\,(\K(Q,I))$ and  $\proj\,(\K(Q,I)^{op})$ are locally finite $\K$-ringoids.
 \item[(d)] $\ind\,\proj\,(\K(Q,I))=\{\Lambda e_a\}_{a\in Q_0}.$
 \item[(e)] $\ind\,\proj\,(\K(Q,I)^{op})=\{e_a\Lambda\}_{a\in Q_0}.$
 \end{itemize}
 \end{pro}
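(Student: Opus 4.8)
The plan is to verify each item essentially by unwinding the definitions and invoking the folklore facts about path algebras of locally finite quivers with admissible relations, together with the earlier results connecting w.e.i.\ $\K$-algebras with their associated ringoids. First I would prove (a): the pair $(\K Q,\varepsilon_Q)$ is a w.e.i.\ $\K$-algebra by construction, and since $I$ is admissible and contained in $J_Q^2$, the idempotents $e_a=\varepsilon_a+I$ remain orthogonal and satisfy $\K(Q,I)=\oplus_a e_a\K(Q,I)=\oplus_a \K(Q,I)e_a$. For Hom-finiteness I would show $e_b\K(Q,I)e_a\in f.\ell(\K)$: admissibility provides an $n_a$ such that every path of length $\geq n_a$ starting or ending at $a$ lies in $I$, and local finiteness of $Q$ guarantees there are only finitely many paths of each length between any two fixed vertices; hence $e_b\K(Q,I)e_a$ is spanned by the (finitely many) residue classes of paths of length $<n_a$ from $a$ to $b$, so it is a finite-dimensional $\K$-vector space.

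Next I would treat (c) in the same breath, since it is the symmetric statement: $\Supp(e_a\Lambda)=\{e_b\in\ind\,\R(\Lambda)\;:\;e_b\Lambda e_a\neq 0\}$ consists of those $b$ for which there is a path from $a$ to $b$ not lying in $I$; by admissibility all such paths have length $<n_a$, and by local finiteness there are only finitely many of them, so only finitely many vertices $b$ can occur. The left-hand statement is dual. Then (b): by Corollary \ref{WeiEquiv3}(c), $\ind\,\R(\K(Q,I))$ is the set of primitive idempotents among the $e_a$ with pairwise non-isomorphic projectives; one shows each $e_a$ is primitive because $e_a\Lambda e_a=\End(e_a\Lambda)$ has no nontrivial idempotents — its only unit is the class of $\varepsilon_a$ and all other elements are nilpotent, as they are sums of residues of cyclic paths of positive length, which lie in $\rad$ — and one shows $e_a\Lambda\not\simeq e_b\Lambda$ for $a\neq b$ because $\Hom(e_a\Lambda,e_b\Lambda)\simeq e_b\Lambda e_a$ contains no element whose composite with an element of $e_a\Lambda e_b$ is the identity $\varepsilon_a$, again by the length/grading argument (a nonzero path from $a$ to $b$ composed with one from $b$ to $a$ is a cyclic path of positive length, hence in $I$ or in the radical). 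So all of $e_{Q,I}$ lies in $\ind\,\R(\Lambda)$.

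Finally, (d), (e), (f) are immediate consequences of what precedes combined with earlier results. For (d): by Proposition \ref{WeiEquiv2}, Hom-finiteness of $(\K(Q,I),e_{Q,I})$ from (a) is equivalent to $\proj(\Lambda)$ and $\proj(\Lambda^{op})$ being locally finite $\K$-ringoids. For (e) and (f): by Corollary \ref{WeiEquiv3}(d) and (e), $\ind\,\proj(\Lambda^{op})=\{e\Lambda\;:\;e\in\ind\,\R(\Lambda)\}$ and $\ind\,\proj(\Lambda)=\{\Lambda e\;:\;e\in\ind\,\R(\Lambda)\}$, and by (b) we have $\ind\,\R(\Lambda)=e_{Q,I}=\{e_a\}_{a\in Q_0}$, which gives exactly the stated descriptions. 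I expect the main obstacle to be item (b) — specifically, the careful verification that each $e_a$ is primitive and that the projectives $e_a\Lambda$ are pairwise non-isomorphic; this is where one genuinely uses the admissibility hypothesis $I\subseteq J_Q^2$ (so that no relation kills or identifies vertices) together with the grading on $\K Q$, whereas the other items are more or less direct bookkeeping with the finiteness hypotheses.
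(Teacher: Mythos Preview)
Your proposal is correct and follows essentially the same route as the paper. The paper's own proof is extremely terse: it simply refers to \cite[2.1]{BG} for (a), (b), (c) and invokes Corollary~\ref{WeiEquiv3} for (d), (e), (f); you supply exactly the standard Bongartz--Gabriel arguments that lie behind the citation, and for (d)--(f) you use precisely the same earlier results (indeed you are slightly more careful, correctly isolating Proposition~\ref{WeiEquiv2} for (d), whereas the paper lumps it under Corollary~\ref{WeiEquiv3}).
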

 \begin{dem} For a proof of (a) and (b), see \cite[2.1]{BG}. The items (c), (d) and (e), can be obtained from Corollary \ref{WeiEquiv3'}.
 \end{dem}

 By Corollary \ref{WeiEquiv3'}, we know that the rings with a nice setting, where  we can define the standard modules, are precisely the Hom-finite
 $\K$-algebras with enough idempotents.
 \

Let $(\Lambda,\{e_i\}_{i\in I})$ be a Hom-finite w.e.i $\K$-algebra.  Then
 $\ind\,\proj(\Lambda^{op})=\{e\Lambda\;:\;[e]\in \ind\,\{e_i\}_{i\in I}\}.$ Choose a partition $\tilde{\A}=\{\tilde{\A}_i\}_{i<\alpha}$
 of the set $\ind\,\{e_i\}_{i\in I}.$  Define ${}_{\Lambda^{op}}P_e(i):=e\Lambda,$ for any $[e]\in\tilde{\A}_i.$ Let
 ${}_{\Lambda^{op}}P:=\{{}_{\Lambda^{op}}P(i)\}_{i\leq\alpha},$ where ${}_{\Lambda^{op}}P(i):=\{{}_{\Lambda^{op}}P_e(i)\}_{e\in\tilde{\A}_i}.$ The
 family of $\tilde{\A}$-standard right $\Lambda$-modules ${}_{\Lambda^{op}}\Delta=\{\Delta(i)\}_{i<\alpha},$ where
 $\Delta(i):=\{\Delta_{e}(i)\}_{e\in \tilde{\A}_i},$ is defined as follows
 $$\Delta_{e}(i):=\frac{{}_{\Lambda^{op}}P_{e}(i)}{\mathrm{Tr}_{\oplus_{j<i}\overline{P}(j)}({}_{\Lambda^{op}}P_{e}(i))},$$
 where $\overline{P}(j):=\bigoplus_{r\in\tilde{\A}_j}\,{}_{\Lambda^{op}}P_r(j).$  Let $P:=\delta^{-1}({}_{\Lambda^{op}}P),$ where
 $\delta:\Mod_\rho(\R(\Lambda))\to \Mod(\Lambda^{op})$ is the isomorphism of Proposition \ref{WeiEquiv1}. Then, by Corollary \ref{WeiEquiv3'} (a), it can be shown that
 $\delta({}_{\tilde{\A}}\Delta_e(i))={}_{\Lambda^{op}}\Delta_e(i).$

 \begin{defi}\label{ssringwei}Let $(\Lambda,\{e_i\}_{i\in I})$ be a Hom-finite w.e.i $\K$-algebra. We say that the pair $(\Lambda,\tilde{\A})$ is a right {\bf standardly  stratified $\K$-algebra} if  $\tilde{\A}$ is a partition of $\ind\,\{e_i\}_{i\in I}$  such that
$\mathrm{Tr}_{\oplus_{j<i}\overline{P}(j)}({}_{\Lambda^{op}}P_{e}(i))\in \ltt{F}_f(\bigcup_{j<i}\Delta(j)),$
for any $i<\alpha$ and $e\in\tilde{\A}_i.$
\end{defi}

\begin{rk} \label{WeiEquiv6} Let $(\Lambda,\{e_i\}_{i\in I})$ be a Hom-finite w.e.i $\K$-algebra. Consider 
$\overline{\R}(\Lambda):=\proj_\rho(\R(\Lambda))$ as we did in Remark \ref{aurxRingoids}. Then  $\overline{\R}(\Lambda)$ is a locally finite $\K$-ringoid such that the restriction functor
$$\Psi:\Mod_\rho(\overline{\R}(\Lambda))\to \Mod_\rho(\R(\Lambda)),\quad F\mapsto F|_{\R(\Lambda)}$$
 is an equivalence of categories and $\Psi((-,Y(e_i)))=Y(e_i),$ for any $i\in I.$ 
 \
 
Let  $\tilde{\A}=\{ \tilde{\A}_j\}_{j<\alpha}$ be a partition of the set
 \begin{center}
 $\ind\,(\overline{\R}(\Lambda))=\{ E_e:=\delta^{-1}(e\Lambda)\;:\; [e]\in \ind\,\{e_i\}_{i\in I}\}$ (see Corollary \ref{WeiEquiv3'} (a)).
 \end{center} 
 Then, $\Psi(\tilde{\A})$ is
 a partition  of $\ind\,\{e_i\}_{i\in I}.$ Moreover, for $E=\delta^{-1}(e\Lambda)\in\tilde{\A}_j,$ we have $\Psi({}_{\tilde{\A}}\Delta_E(i))={}_{\Psi(\tilde{\A})}\Delta_e(i).$
 Therefore, by using that $\overline{\R}(\Lambda)$ is a locally finite $\K$-ringoid, we can translate in terms of  $\Lambda$ all the
 results that we have proven for  locally finite $\K$-ringoids.
\end{rk}

\begin{cor} \label{WeiEquiv7} Let $(\Lambda,\{e_i\}_{i\in I})$ be a Hom-finite w.e.i $\K$-algebra, and let $\tilde{\A}$ be a partition of 
$\ind\,\{e_i\}_{i\in I}$ such
that $(\Lambda,\tilde{\A})$ is a right  standardly  stratified $\K$-algebra. Then, all the standard modules
$\Delta_e(i)$ are local and the following statements hold true.
\begin{itemize}
\item[(a)] For any $M\in\F_f(\Delta),$ the filtration multiplicity $[M:\Delta_{e}(i)]$ does not depend on a given $\Delta$-filtration of $M.$
\item[(b)] $\F_{f}(\Delta)\subseteq\finp(\Lambda^{op})$ and it is  a locally finite $\K$-ringoid.
\end{itemize}
\end{cor}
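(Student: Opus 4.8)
The plan is to transport the corresponding statement about locally finite $\K$-ringoids, namely Corollary \ref{C2filt-sum-direct} (or equivalently Theorem \ref{filt-sum-direct}), across the equivalence of categories furnished by Proposition \ref{WeiEquiv1} and the remarks in \ref{WeiEquiv6}. First I would pass from the w.e.i.\ $\K$-algebra $(\Lambda,\{e_i\}_{i\in I})$ to its associated ringoid $\R(\Lambda)$; by Proposition \ref{WeiEquiv2}, Hom-finiteness of $(\Lambda,\{e_i\}_{i\in I})$ is equivalent to $\proj_\rho(\R(\Lambda))$ being a locally finite $\K$-ringoid, and by Remark \ref{WeiEquiv6}(2) the restriction functor $\Psi:\Mod_\rho(\overline{\R}(\Lambda))\to\Mod_\rho(\R(\Lambda))$ is an equivalence, where $\overline{\R}(\Lambda):=\proj_\rho(\R(\Lambda))$ is locally finite. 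Composing with the isomorphism $\delta:\Mod_\rho(\R(\Lambda))\to\Mod(\Lambda^{op})$ of Proposition \ref{WeiEquiv1}, which satisfies $\delta(Y(e_i))=e_i\Lambda$, the whole situation is identified with that of a locally finite $\K$-ringoid.

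Next I would match up the data. The partition $\tilde{\A}$ of $\ind\,\R(\Lambda)=\ind\,(\overline{\R}(\Lambda))$ (using Corollary \ref{WeiEquiv3} and Remark \ref{WeiEquiv6}(3)) corresponds to an exhaustive admissible family $\B(\A)$ of subcategories of $\overline{\R}(\Lambda)$ via Proposition \ref{P=AS}, and by Remark \ref{WeiEquiv6}(3) the standard modules match: $\Psi({}_{\tilde{\A}}\Delta_E(i))={}_{\Psi(\tilde{\A})}\Delta_e(i)$, and then $\delta$ carries these to ${}_{\Lambda^{op}}\Delta_e(i)$ as noted just before Definition \ref{ssringwei}. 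By Remark \ref{WeiEquiv6}(1), the hypothesis that $(\Lambda,\tilde{\A})$ is right standardly stratified is precisely the statement that $(\R(\Lambda),\tilde{\A})$ is a standardly stratified $\K$-ringoid, hence so is $(\overline{\R}(\Lambda),\B(\A))$ after transporting along $\Psi$.

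Now I would invoke Corollary \ref{C2filt-sum-direct} applied to the locally finite $\K$-ringoid $\overline{\R}(\Lambda)$ with the standard family $\Delta={}_{\B(\A)}\Delta$: it gives that all $\Delta_E(i)$ are local, that the filtration multiplicity $[M:\Delta_E(i)]$ is independent of the chosen $\Delta$-filtration for $M\in\F_f(\Delta)$, and that $\F_f(\Delta)\subseteq\finp_\rho(\overline{\R}(\Lambda))$ is itself a locally finite $\K$-ringoid. Since equivalences and isomorphisms of abelian categories preserve locality of objects (endomorphism rings are isomorphic), send exact sequences to exact sequences, and therefore preserve both filtrations and the property of being finitely presented, each of these conclusions transfers verbatim through $\delta\circ\Psi$. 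In particular $\F_f({}_{\Lambda^{op}}\Delta)\subseteq\finp(\Lambda^{op})$ is a locally finite $\K$-ringoid, and the multiplicities $[M:{}_{\Lambda^{op}}\Delta_e(i)]$ are well defined, which is exactly the assertion of the corollary.

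The only place requiring genuine care — and thus the main obstacle — is the bookkeeping in the two-step translation $\Psi$ then $\delta$: one must check that the objects $Y(e_i)$, the traces $\Tr_{\oplus_{j<i}\overline{P}(j)}(-)$, and the notion of ``finite $\Delta$-filtration'' all correspond correctly under these functors, so that the standardly stratified hypothesis and the conclusions about $\F_f(\Delta)$ really do match on both sides. This is entirely routine given Propositions \ref{WeiEquiv1}, \ref{WeiEquiv2} and the three parts of Remark \ref{WeiEquiv6}, all of which have already been established; no new homological input is needed beyond Corollary \ref{C2filt-sum-direct}.
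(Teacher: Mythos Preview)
Your proposal is correct and follows essentially the same approach as the paper, which simply states that the result follows from Remark \ref{WeiEquiv6} and Corollary \ref{C2filt-sum-direct}. You have spelled out in detail exactly the translation mechanism encoded in the three parts of Remark \ref{WeiEquiv6} and the isomorphism $\delta$ of Proposition \ref{WeiEquiv1}, which is precisely what the paper's one-line proof is abbreviating.
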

\begin{dem} It follows from Remark \ref{WeiEquiv6} and Corollary \ref{C2filt-sum-direct}.
\end{dem}

\begin{cor} \label{WeiEquiv8} Let $Q$ be a locally finite quiver (which may be infinite), $\K$ be a field and $I$ be an admissible ideal of $\K Q.$ Then, for any
partition $\tilde{\A}$ of $e_{Q,I},$ each of the standard module
$\Delta_e(i)$ is local and the following statements hold true.
\begin{itemize}
\item[(a)] For any $M\in\F_f(\Delta),$ the filtration multiplicity $[M:\Delta_{e}(i)]$ does not depend on a given $\Delta$-filtration of $M.$
\item[(b)] $\F_{f}(\Delta)\subseteq\finp(\K(Q,I)^{op})$ and it is  a locally finite $\K$-ringoid.
\end{itemize}
\end{cor}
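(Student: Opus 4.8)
The plan is to deduce this from Corollary \ref{C1filt-sum-direct}, the ``support finite'' version of the filtration-multiplicity theorem, applied to the locally finite $\K$-ringoid $\overline{\R}(\Lambda):=\proj_\rho(\R(\Lambda))$ attached to $\Lambda:=\K(Q,I)$. First I would collect the structural facts about $\Lambda$: by Proposition \ref{WeiEquiv5}, the pair $(\Lambda,e_{Q,I})$ is a Hom-finite w.e.i $\K$-algebra with $\ind\,\R(\Lambda)=e_{Q,I}$, and $\Lambda$ is support finite, in particular right support finite. By Proposition \ref{WeiEquiv2}, the Hom-finiteness of $(\Lambda,e_{Q,I})$ gives that $\overline{\R}(\Lambda)$ is a locally finite $\K$-ringoid, and by Remark \ref{WeiEquiv4}(2) the right support finiteness of $\Lambda$ translates into right support finiteness of $\R(\Lambda)$; since the restriction functor $\Psi:\Mod_\rho(\overline{\R}(\Lambda))\to\Mod_\rho(\R(\Lambda))$ of Remark \ref{WeiEquiv6}(2) is an equivalence fixing each $Y(e_a)$, the ringoid $\overline{\R}(\Lambda)$ is right support finite as well.

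Next I would move the given data over to $\overline{\R}(\Lambda)$. A partition $\tilde{\A}$ of $e_{Q,I}=\ind\,\R(\Lambda)=\ind\,(\overline{\R}(\Lambda))$ corresponds, via Proposition \ref{P=AS}, to an exhaustive admissible family $\B=\{\B_i\}_{i<\alpha}$ of subcategories of $\overline{\R}(\Lambda)$ with $\sigma(\B)=\tilde{\A}$; and by Remark \ref{WeiEquiv6}(3) the family ${}_{\tilde{\A}}\Delta$ of $\tilde{\A}$-standard right $\Lambda$-modules is identified, through the equivalence $\delta\circ\Psi$ (where $\delta$ is the isomorphism of Proposition \ref{WeiEquiv1}), with the family ${}_\B\Delta$ of $\B$-standard right $\overline{\R}(\Lambda)$-modules. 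Now Corollary \ref{C1filt-sum-direct}, applied to the locally finite, right support finite $\K$-ringoid $\overline{\R}(\Lambda)$ and to $\B$, tells us that all the $\Delta_e(i)$ are local, that $[M:\Delta_e(i)]$ is independent of the chosen $\Delta$-filtration for $M\in\F_f({}_\B\Delta)$, and that $\F_f({}_\B\Delta)\subseteq\finp_\rho(\overline{\R}(\Lambda))$ is a locally finite $\K$-ringoid.

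Finally I would transport these conclusions back. The composite equivalence $\delta\circ\Psi:\Mod_\rho(\overline{\R}(\Lambda))\to\Mod(\K(Q,I)^{op})$ preserves locality of objects, sends direct-sum filtrations to direct-sum filtrations with the same multiplicities, and carries $\finp_\rho(\overline{\R}(\Lambda))$ onto $\finp_\rho(\R(\Lambda))$, which $\delta$ identifies with $\finp(\K(Q,I)^{op})$; this yields statements (a) and (b) as well as the locality of every $\Delta_e(i)$. I expect the only real care to be needed in the bookkeeping of the three identifications involved --- Proposition \ref{P=AS} (admissible families versus partitions), the restriction equivalence $\Psi$, and the isomorphism $\delta$ --- so that the standard families, the classes $\F_f$, and the categories $\finp$ are matched consistently; this is routine but is the one place an oversight could occur. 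Note that one cannot simply invoke Corollary \ref{WeiEquiv7} here, since that requires $(\Lambda,\tilde{\A})$ to be standardly stratified, whereas the present statement is meant for an arbitrary partition $\tilde{\A}$; the right input is exactly the support-finite Corollary \ref{C1filt-sum-direct}.
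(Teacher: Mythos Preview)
Your proposal is correct and follows essentially the same route as the paper: invoke Proposition~\ref{WeiEquiv5} to obtain that $(\K(Q,I),e_{Q,I})$ is a Hom-finite, support-finite w.e.i.\ $\K$-algebra, then pass via Remark~\ref{WeiEquiv6} to the locally finite $\K$-ringoid $\overline{\R}(\Lambda)$ and apply Corollary~\ref{C1filt-sum-direct}. Your explicit unpacking of the identifications (Proposition~\ref{P=AS}, the equivalence $\Psi$, the isomorphism $\delta$) and your observation that Corollary~\ref{WeiEquiv7} is inapplicable here are exactly right; the paper simply compresses all of this into the phrase ``the result follows from Remark~\ref{WeiEquiv6} and Corollary~\ref{C1filt-sum-direct}.''
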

\begin{dem} By Proposition \ref{WeiEquiv5}, we have that $(\K(Q,I),e_{Q,I})$ is a basic Hom-finite w.e.i $\K$-algebra, which is also support finite. Then, the result follows from Remark \ref{WeiEquiv6} and Corollary \ref{C1filt-sum-direct}.
\end{dem}

\begin{ex} Let $Q$ be the following locally finite quiver
\[
\begin{tikzcd}[->,>=stealth,shorten >=1pt,auto,node distance=4cm,
                thick,main node/.style={circle,draw,font=\Large\bfseries}]
0 \arrow[r, "\beta"] \arrow[out=45,in=90,loop,swap,"\alpha"]&
  1 \arrow[r, "\beta"]\arrow[out=45,in=90,loop,swap,"\alpha"]&
    2 \arrow[r, "\beta"]\arrow[out=45,in=90,loop,swap,"\alpha"]&
\cdots
\end{tikzcd}
\]

 Consider the quotient path $\K$-algebra $\Lambda:=\K(Q,I),$ where $\K$ is a field and $I$ is the admissible ideal $<\alpha^2, \beta^2,\alpha\beta,\beta\alpha>.$  For
 each $i\in Q_0,$ we have the idempotent $e_i:=\varepsilon_i+I$ of $\Lambda.$  In what follows, we choose different partitions of $\{e_i\}_{i\in Q_0}$ and
 we will see if $\Lambda$ is standardly stratified (or not) with respect to these partitions.
 \

 \begin{itemize}
 \item[(1)] Consider $\tilde{\A}=\{\tilde{A}_i\}_{i\in<\aleph_0},$ where $\tilde{A}_i:=\{e_i\}.$ In this case, we have that
 $\Delta(i)=\{\Delta_{e_i}(i)=\Lambda e_i\}$ and thus $(\Lambda,\tilde{\A})$ is standardly stratified. However, it is not quasi-hereditary since
 $\End(\Delta_{e_0}(0))\simeq e_0\Lambda e_0$ is not a division ring.
 \item[(2)] Consider $\tilde{\B}=\{\tilde{B}_0, \tilde{B}_1, \tilde{B}_2\},$ where $\tilde{B}_0:=\{e_1\},$ $\tilde{B}_1:=\{e_0\}$ and
 $\tilde{B}_2:=\{e_i\}_{i\geq 2}.$ In this case, we get $\Delta_{e_1}(0)=\Lambda e_1,$ $\Delta_{e_0}(1)=\Lambda e_0/S(1)$ and
 $\Delta_{e_i}(2)=\Lambda e_i,$ for any $i\geq 2,$ where $S(1)=\Lambda e_1/\rad\,(\Lambda e_1).$ Note that $\tilde{\B}$ is finite, however
 $(\Lambda,\tilde{\B})$ is not standardly stratified, since $\Lambda e_0\not\in\F_f(\Delta).$
 \item[(3)] Consider $\tilde{\C}=\{\tilde{\C}_i\}_{i\in<\aleph_0},$ where $\tilde{\C}_0:=\{e_1\},$ $\tilde{\C}_1:=\{e_0\}$ and
 $\tilde{\C}_i:=\{e_i\},$ for $i\geq 2.$ In this case, we get $\Delta_{e_1}(0)=\Lambda e_1,$ $\Delta_{e_0}(1)=\Lambda e_0/S(1)$ and
 $\Delta_{e_i}(i)=\Lambda e_i,$ for any $i\geq 2.$ Note that $\tilde{\C}$ is infinite, however
 $(\Lambda,\tilde{\C})$ is not standardly stratified, since $\Lambda e_0\not\in\F_f(\Delta).$
 \end{itemize}
\end{ex}

\footnotesize

\vskip3mm \noindent Octavio Mendoza Hern\'andez:\\ Instituto de Matem\'aticas, Universidad Nacional Aut\'onoma de M\'exico\\
Circuito Exterior, Ciudad Universitaria,
C.P. 04510, M\'exico, D.F. MEXICO.\\ {\tt omendoza@matem.unam.mx}

\vskip3mm \noindent Martin Ort\'iz Morales:\\ Facultad de Ciencias, Universidad  Aut\'onoma del Estado de M\'exico\\
{\tt mortizmo@uaemex.mx}

\vskip3mm \noindent Edith Corina S\'aenz Valadez:\\ Departamento de Matem\'aticas, Facultad de Ciencias, Universidad Nacional Aut\'onoma de M\'exico\\
Circuito Exterior, Ciudad Universitaria,
C.P. 04510, M\'exico, D.F. MEXICO.\\ {\tt corina.saenz@gmail.com}

\vskip3mm \noindent Valente Santiago Vargas:\\ Departamento de Matem\'aticas, Facultad de Ciencias, Universidad Nacional Aut\'onoma de M\'exico\\
Circuito Exterior, Ciudad Universitaria,
C.P. 04510, M\'exico, D.F. MEXICO.\\ {\tt valente.santiago.v@gmail.com}

\end{document}